\numberwithin{equation}{section}
\theoremstyle{plain}
\newtheorem{theorem}{Theorem}[section]
\newtheorem{lemma}[theorem]{Lemma}
\newtheorem{corollary}[theorem]{Corollary}
\newtheorem{proposition}[theorem]{Proposition}
\theoremstyle{definition}
\theoremstyle{remark}
\newtheorem{remark}[theorem]{Remark}
\renewcommand{\P}{\mathbb P}
\newcommand{\E}{\mathbb E}
\newcommand{\cE}{\mathcal E}
\newcommand{\R}{\mathbb R}
\renewcommand{\Re}{\mathcal R}
\newcommand{\Z}{\mathbb Z}
\newcommand{\N}{\mathbb N}
\newcommand{\ep}{\varepsilon}
\newcommand{\cG}{\pazocal G}
\newcommand{\cA}{\mathcal A}
\newcommand{\fp}{\mathfrak p}
\newcommand{\Vol}{\mathrm{Vol}}
\newcommand{\Lu}{\underline{\Lambda}}
\newcommand{\Lc}{\Lambda}
\newcommand{\si}[1]{{\mathchoice{}{}{\scriptscriptstyle}{}#1}}
\newcommand{\sN}{{\mathchoice{}{}{\scriptscriptstyle}{}N}}
\newcommand{\lr}[4]{#3\xleftrightarrow[#1]{#2} #4}
\newcommand{\nlr}[4]{#3\centernot{\xleftrightarrow[#1]{#2}} #4}
\DeclareMathAlphabet{\pazocal}{OMS}{zplm}{m}{n}
\title{Supercritical sharpness for Voronoi percolation}
\author{Barbara Dembin\footnotemark[1]\footnote{IRMA, barbara.dembin@math.unistra.fr} \and Franco Severo\footnotemark[2]\footnote{ICJ, severo@math.univ-lyon1.fr}}
\date{}
\begin{document}
\thispagestyle{empty}
\maketitle

\begin{abstract}
We prove that the supercritical phase of Voronoi percolation on $\R^d$, $d\geq 3$, is well behaved in the sense that for every $p>p_c(d)$ local uniqueness of macroscopic clusters happens with high probability. As a consequence, truncated connection probabilities decay exponentially fast and percolation happens on sufficiently thick 2D slabs. This is the analogue of the celebrated result of Grimmett \& Marstrand for Bernoulli percolation and serves as the starting point for renormalization techniques used to study several fine properties of the supercritical phase. 
\end{abstract}

\section{Introduction}\label{sec:intro}

\textbf{Motivation.} Sharpness of phase transition is a fundamental property in the study of off-critical behavior in percolation theory. In general terms, this property states that the model is ``well behaved'' on the \emph{entire} subcritical and supercritical phases in the sense that finite clusters are in fact ``tiny'' (typically, exponentially decaying size distribution). For Bernoulli percolation on $\Z^d$, $d\geq3$, \emph{subcritical sharpness} was proved independently by Aizenman \& Barsky \cite{AizBar87} and Menshikov \cite{Men86}, while \emph{supercritical sharpness} was obtained by Grimmett \& Marstrand \cite{GriMar90} -- the case $d=2$ was previously known as a consequence of \cite{Kes80}. Thanks to these highly influential works, both subcritical and supercritical phases of Bernoulli percolation on $\Z^d$ are now rather well understood -- see the end of this section for a discussion on the multiple applications of sharpness.

The study of sharpness of phase transition for correlated percolation models in dimensions $d\geq3$ has seen great progress in the last years.  On the one hand, subcritical sharpness has been proved for various models including Voronoi percolation \cite{DumRaoTas17a}, random cluster (or FK) model with $q\geq1$ \cite{DumRaoTas17b} (the special case of FK-Ising model, i.e.~$q=2$, had been obtained in \cite{AizBarFer87}), occupied and vacant sets of Boolean percolation (under some assumptions on the radius distribution) \cite{DumRaoTas17c,DT22}, level-set percolation of some Gaussian fields \cite{DGRS19,Sev21,Mui22} and the vacant set of random interlacements \cite{DGRST23a}. 
Supercritical sharpness on the other hand is known to hold for a considerably smaller subset of the models mentioned above, namely the FK-Ising model \cite{Bod05}, the occupied set of Boolean percolation \cite{Tan93,DT22}, level-set percolation of some Gaussian fields \cite{DGRS19,Sev21} and the vacant set of random interlacements \cite{DGRST23a}.
This discrepancy between our understanding of the subcricital and the supercritical phases of correlated models may be explained by the existence of techniques coming from the sharp threshold theory of Boolean functions, such as Talagrand and OSSS inequalities \cite{Tal94,OSSS}, that have been shown to be very powerful to prove subcritical sharpness, whereas the number of techniques to prove supercritical sharpness remains rather limited -- see however \cite{CMT21} for a new proof of supercritical sharpness for Bernoulli percolation using sharp threshold inequalities. \\ 

\textbf{Main results.} In the present article, we study the supercritical phase of Voronoi percolation. First introduced in the context of first passage percolation \cite{MR1094141}, Voronoi percolation is one of the most studied models in percolation theory. The model can be defined as follows: 
consider the Voronoi tessellation associated to a Poisson point process of intensity $1$ on $\R^d$ and then color independently each cell black (open) with probability $p$ and white (closed) with probability $1-p$ -- see Sections~\ref{sec:preliminaries} and \ref{sec:comparison} for precise definitions. We are interested in the connectivity properties of the random set $\pazocal{V}(p)\subset\R^d$ given by the union of the black cells as $p$ varies.
We say that $\pazocal{V}(p)$ \emph{percolates} if it contains an infinite (i.e.~unbounded) cluster (i.e.~connected component). The model exhibits a (non-trivial) phase transition at the \emph{critical point} defined as
\begin{equation}\label{eq:def_pc}
p_c=p_c(d)\coloneqq\inf\big\{p\in \R:~ \P[\pazocal{V}(p) \text{ percolates}]>0\big\}.
\end{equation}

As it is often the case in percolation theory, the model is rather well understood in dimension $d=2$ due to duality. In fact, it has been proved in \cite{BolRio06} that $p_c(2)=1/2$ and that both subcritical and supercritical sharpness hold -- this result can be seen as the analogue of \cite{Kes80}. Furthermore, a lot is  known about the behaviour at the critical point $p=1/2$, see for instance \cite{Tas14b,AGMT16,MR3878133,Van19,BenSch98}.

In higher dimensions though, much less is known. The existence of a non-trivial phase transition (i.e.~$p_c\in (0,1)$) has been proved in \cite{MR2127372} along with asymptotic bounds for $p_c(d)$. As mentioned above, subcritical sharpness has been recently proved \cite{DumRaoTas17a}. In this paper, we give the first proof of the corresponding result on the supercritical phase. 

\begin{theorem}[Supercritical sharpness] \label{thm:sharpness} 
For every $d\geq3$ and $p>p_c(d)$, there exists $c=c(d,p)>0$ such that the following inequalities hold for every $R\geq 1$, 
\begin{align}\label{eq:exp_decay_diam}
    \P[R\leq \mathrm{Diam}(\mathcal{C}_0(p))<\infty]
    &\leq e^{-cR},\\
    \label{eq:exp_decay_vol}
    \P[R\leq \mathrm{Vol}(\mathcal{C}_0(p))<\infty]
    &\leq e^{-cR^{\frac{d-1}{d}}},
\end{align}
where $\mathcal{C}_0(p)$ denotes the cluster of $0$ in $\pazocal{V}(p)$ and $\mathrm{Diam}(\mathcal{C}_0(p))$ and $\mathrm{Vol}(\mathcal{C}_0(p))$ stand for its diameter and volume, respectively. 
Furthermore, there exists $M=M(d,p)>0$ such that $\pazocal{V}(p)$ percolates in the slab $\R^2\times[0,M]^{d-2}$.
\end{theorem}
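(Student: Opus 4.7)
The plan is to adapt the classical Grimmett--Marstrand strategy to the correlated setting of Voronoi percolation, proceeding in three stages: (i) establish slab percolation at every $p>p_c(d)$; (ii) derive local uniqueness of macroscopic clusters from slab percolation via a block renormalization; (iii) deduce the exponential decay estimates \eqref{eq:exp_decay_diam} and \eqref{eq:exp_decay_vol} from local uniqueness.

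The heart of the argument is establishing slab percolation via a finite-size criterion combined with a sprinkling argument. Fix $p_0\in(p_c,p)$ and set $\ep=p-p_0>0$. At $p_0$ an infinite cluster exists almost surely, so with positive probability a macroscopic cluster reaches a face of a large box $[0,N]^d$. One seeks to show, at some large scale $N$, a ``building-block'' event of probability close to $1$: the configuration in an $N$-box contains long black crossings that can be extended and glued to neighbouring blocks. The sprinkling pays for the gluing -- having observed the configuration at level $p_0$, each cell is independently re-sampled to be open with probability $\ep/(1-p_0)$, and the newly opened cells provide enough freedom to connect polynomially many ``seeds'' with high probability. Once the finite-size criterion holds at some $N$, stochastic domination by a highly supercritical Bernoulli site percolation on a coarse-grained $\Z^2$-lattice (times a finite set of heights) yields percolation in a thick slab $\R^2\times[0,M]^{d-2}$.

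The main obstacle is the non-locality of the Voronoi model: large cells create long-range correlations and, unlike Bernoulli percolation on $\Z^d$, one cannot directly localise events in disjoint boxes. I would handle this by restricting to a ``tame'' event for the Poisson point process, on which every cell meeting an $N$-box has diameter at most $N^{1/4}$ and the local point density stays close to $1$. Standard large-deviation estimates for Poisson processes show that tameness holds up to probability $e^{-cN^\alpha}$ for some $\alpha>0$, which is far smaller than the constant losses arising in the block construction. On the tame event, events depending on sufficiently separated regions become essentially independent, and one can then adapt the FKG-based arguments in the spirit of Grimmett--Marstrand. A comparison step relating $\pazocal V(p)$ to a process with genuinely finite-range dependencies -- akin to what is done for subcritical sharpness in \cite{DumRaoTas17a} -- is the natural tool to make this precise.

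Given slab percolation at $p$, local uniqueness follows from a standard block renormalization: in any mesoscopic annular region, the probability that two large clusters remain disjoint decays, because the slab connections provide many asymptotically independent attempts to merge them. The diameter bound \eqref{eq:exp_decay_diam} then follows from the classical scheme: if $R\leq \mathrm{Diam}(\mathcal C_0(p))<\infty$, each of the $\Theta(R/N)$ concentric dyadic annuli around $0$ must witness a failure of local uniqueness -- the cluster of $0$ enters the annulus but fails to merge with the unique macroscopic cluster crossing it -- yielding an exponential rate after controlling correlations through the tameness event. Finally, the volume bound \eqref{eq:exp_decay_vol} is deduced from \eqref{eq:exp_decay_diam} by a Peierls-type isoperimetric argument: a finite cluster of volume $R$ must be surrounded by a blocking surface of order $R^{(d-1)/d}$ closed cells, whose approximate independence (again verified on the tameness event) produces the stretched exponential factor $R^{(d-1)/d}$.
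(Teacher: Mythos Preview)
Your high-level outline (finite-range reduction $\to$ Grimmett--Marstrand $\to$ local uniqueness $\to$ Peierls) is the right scaffolding, and it matches the paper's architecture at that level of abstraction. However, the proposal significantly underestimates the main obstacle, and the internal logical flow differs from the paper's in ways that matter.

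The critical gap is the comparison step you mention only in passing. Conditioning on a ``tame'' event where all cells have small diameter does \emph{not} give you the conditional independence structure that Grimmett--Marstrand requires: the GM argument explores the cluster dynamically, and after revealing part of the configuration the conditional law on the unexplored region is not a Voronoi model at all, tame or otherwise. The paper handles this by introducing a genuinely finite-range truncated model $\pazocal{V}_N$ (with a hard cap on the number of closed points per $N$-box and a distance cutoff at $N$), proving Grimmett--Marstrand for $\pazocal{V}_N$, and then establishing a \emph{global comparison} $\P[\pazocal{V}(p)\in A]\le\P[\pazocal{V}_N(p+\varepsilon)\in A]$ for connection events via a delicate interpolation and local-surgery argument (Theorem~\ref{thm:comparison}). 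This comparison is the technical heart of the paper and is not ``akin to'' the OSSS-based argument in \cite{DumRaoTas17a}; it is closer in spirit to Aizenman--Grimmett type differential inequalities and requires, as a key input, the chemical-distance large-deviation estimate of Proposition~\ref{prop:chemical_dist}.

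Two further discrepancies. First, the paper does \emph{not} deduce local uniqueness from slab percolation. It first obtains surface-order decay of disconnection, $\P[\nlr{}{\pazocal{V}(p)}{B_R}{\infty}]\le e^{-cR^{d-1}}$ (Corollary~\ref{cor:decay_disconnection}), then uses this to build an $\ell$-dense cluster with $\ell=C(\log L)^{1/(d-1)}$ and glues via sprinkling in the style of \cite{benjaminitassion17}; slab percolation is a byproduct (Remark~\ref{rem:decay_extensions}), not an intermediate step toward local uniqueness. Second, your annulus argument for \eqref{eq:exp_decay_diam} is not correct as stated: $\mathcal{C}_0$ crossing an annulus while remaining finite does not force a failure of local uniqueness in that annulus, since $\mathcal{C}_0$ could itself be the unique crossing cluster there. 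The paper instead shows that every site on the \emph{renormalized boundary} $\partial^{\mathrm{int}} C$ of the coarse-grained cluster is $N$-bad (fails either the uniqueness or the sub-box intersection condition of Lemma~\ref{lem:contgoodbox}), and then runs a Peierls bound on the size of that $\ast$-connected boundary set; both \eqref{eq:exp_decay_diam} and \eqref{eq:exp_decay_vol} come from the single estimate $\P[|\partial^{\mathrm{int}} C|\ge n]\le e^{-cn}$ combined with, respectively, a trivial diameter bound and the isoperimetric inequality on $\Z^d$.
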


The main difficulty in proving Theorem~\ref{thm:sharpness} is to establish a finite size criterion throughout the supercritical phase. This is provided by the following result.
Below, we denote the $\ell^\infty$ box of radius $r$ centered at $0$ by $\Lc_r\coloneqq[-r,r]^d$.

\begin{theorem}[Local uniqueness]\label{thm:unique}
For every $d\geq 3$ and $p>p_c(d)$, one has
    \begin{equation}\label{eq:unique}
    \P[\pazocal{U}(L,p)] \rightarrow 1 ~\text{ as } L\to\infty,
    \end{equation}
where $\pazocal{U}(L,p)$ denotes the event that $\pazocal{V}(p)\cap \Lc_{2L}$ contains only one cluster crossing the annulus $\Lc_L\setminus \Lc_{L/2}$.
\end{theorem}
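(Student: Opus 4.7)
The plan is to adapt the Grimmett--Marstrand renormalization/sprinkling strategy to the continuum Voronoi setting, in the spirit of the supercritical sharpness arguments developed in \cite{DGRS19, Sev21, DGRST23a}. Fix $p>p_c(d)$ and introduce auxiliary parameters $p_c<p_0<p_1<p$. The argument splits into an \emph{existence} part --- show that at least one cluster of $\pazocal{V}(p)\cap\Lc_{2L}$ crossing the annulus $\Lc_L\setminus\Lc_{L/2}$ exists with probability tending to $1$ --- and a \emph{uniqueness} part --- upgrade existence to the statement that there is exactly one such cluster.

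For existence, I would start at parameter $p_0>p_c$, where $\theta(p_0)>0$ and, by a Burton--Keane type argument (valid because the colouring is i.i.d.~conditionally on the tessellation), the infinite cluster is a.s.~unique. Ergodicity of the Poisson point process under translations then implies that the density of cells belonging to the infinite cluster equals $\theta(p_0)>0$, so with probability $1-o(1)$ there is a cluster of $\pazocal{V}(p_0)$ meeting $\Lc_{L/2}$ and reaching infinity. A sprinkling step from $p_0$ to $p_1$ lets me truncate that infinite connection to one contained in $\Lc_{2L}$: the portion of the path leaving $\Lc_{2L}$ is replaced by a detour through a thin shell close to $\partial\Lc_{2L}$, built out of cells flipped from white to black, whose existence at the sprinkled density is guaranteed with high probability by Poisson concentration.

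The crux is the uniqueness step. Write $\pazocal{U}(L,p)^c$ as the disjoint union of the events ``no crossing cluster exists'' and ``at least two disjoint crossing clusters exist''. The first is ruled out by the previous paragraph. For the second, my plan is to show that conditionally on having two disjoint crossing clusters at parameter $p_1$, sprinkling up to $p$ merges them with probability $1-o(1)$. The geometric input is that two disjoint crossings of a thick annulus must approach each other at many intermediate dyadic scales; at each such close encounter, the additional black cells created by the sprinkling provide an essentially independent chance of a patching bridge. Iterating over $k(L)\to\infty$ scales yields an estimate of the form
\begin{equation*}
\P_{p_1}[\text{two disjoint crossings}]\;\le\;o_L(1)\,+\,\P_{p}[\text{two disjoint crossings}]\cdot(1-c)^{k(L)},
\end{equation*}
which, combined with the trivial bound on the second term, closes the argument.

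I expect the main obstacle to be this merging/two-arm step. Two Voronoi-specific complications have to be handled. First, cells are not independent random sets, so a local modification in a small region perturbs neighbouring cells and one must decouple strictly through the underlying Poisson process; working in boxes whose size is a large multiple of the typical cell diameter and using a sprinkling coupling that adds \emph{new} Poisson points coloured black (rather than flipping fixed cells) circumvents this, but requires sharp concentration estimates for the point process. Second, making the merging quantitative enough to defeat the combinatorial cost of two-arm events while keeping the construction localized inside $\Lc_{2L}$ will presumably demand the decoupling and finite-energy tools already developed in the subcritical sharpness proof \cite{DumRaoTas17a} and its continuum analogues \cite{DT22}.
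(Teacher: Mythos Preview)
Your merging step rests on a geometric claim that fails in dimension $d\geq 3$: two disjoint clusters crossing the annulus $\Lc_L\setminus\Lc_{L/2}$ need \emph{not} approach each other at many (or indeed any) intermediate scales. Two straight segments crossing the annulus along parallel directions can stay at distance of order $L$ throughout, so there is no a priori supply of ``close encounters'' at which a sprinkling bridge could act, and your displayed inequality with a factor $(1-c)^{k(L)}$ has no basis. The planar intuition that two arms must interlace does not survive in higher dimensions, and this is precisely why local uniqueness is a genuinely hard statement.

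The paper's route is different in kind. The main work (Theorem~\ref{thm:comparison}) is a global comparison between $\pazocal{V}(p)$ and a finite-range truncated model $\pazocal{V}_N(p+\varepsilon)$, proved via an interpolation/local-surgery scheme; combined with a Grimmett--Marstrand argument for $\pazocal{V}_N$ (Theorem~\ref{thm:disc_decay_truncated}), this yields the quantitative disconnection bound $\P[\nlr{}{\pazocal{V}(p)}{B_R}{\infty}]\leq e^{-cR^{d-1}}$ for every $p>p_c$ (Corollary~\ref{cor:decay_disconnection}). The exponent $d-1>1$ is the crucial input your outline is missing: it implies, by a union bound, that with high probability \emph{every} box of side $\ell\asymp(\log L)^{1/(d-1)}$ inside $\Lc_L$ is connected to $\partial\Lc_{2L}$ at a slightly smaller parameter $p'<p$. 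A Benjamini--Tassion sprinkling argument (Proposition~\ref{prop:dense_cluster}) then merges all these boundary clusters into a single $\ell$-dense cluster in $\Lc_L$. Now any cluster crossing $\Lc_L\setminus\Lc_{L/2}$ has diameter $\geq L/2$ and hence passes near the dense cluster in $\gtrsim L/\ell$ essentially independent $\ell$-boxes; a further sprinkling from $p'$ to $p$ connects it to the dense cluster with failure probability at most $(1-(p-p')^{O(\ell)})^{cL/\ell}\leq e^{-cL^{1/2}}$. The point is that one first manufactures a \emph{ubiquitous} reference cluster, rather than trying to force two arbitrary crossings together directly.
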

\begin{remark}
    Theorem~\ref{thm:unique} can be seen as our main theorem, from which Theorem~\ref{thm:sharpness} follows via standard renormalization arguments. Furthermore, with similar argument, it is not hard to deduce from \eqref{eq:unique} that the rate of convergence is exponential, namely $\P[\pazocal{U}(L,p)]\geq 1-e^{-cL}$ for every $L\geq1$ and $p>p_c(d)$, where $c=c(d,p)>0$ -- see Remark~\ref{rk:section 4}.\\
\end{remark}

\textbf{Applications.} Our results open the way to a deeper understanding of the supercritical behavior of Voronoi percolation. Indeed, Theorem~\ref{thm:unique} says that a sort of \emph{finite size criterion} holds throughout the supercritical phase, which is the starting point for renormalization techniques widely used in the study of fine properties of the supercritical phase, thus having multiple applications. For Bernoulli percolation, local uniqueness has been a crucial ingredient in the study of metric properties of the (unique) infinite cluster $\mathcal{C}_\infty$ \cite{AntPis96,GarMar04,GarMar07,GMPT,Dem21,CerfDembin}, the behavior of the random walk on $\mathcal{C}_\infty$ \cite{GriKesZha93, Bar04, SidSzn04, MatPia07, BerBis07, ArmDar16}, analyticity of the percolation density $\theta(p)$ and other observables \cite{GeoPan20}, and large deviation questions \cite{Pis96}, such as in the proof of convergence of large finite clusters to the so-called Wulff crystal \cite{Cer00}. 
Analogous results for Voronoi percolation are now within reach thanks to Theorem~\ref{thm:unique}. \\

\textbf{Strategy of proof.} The proof of Theorem~\ref{thm:unique} follows an \emph{interpolation strategy}, which has been recently used to prove (both subcritical and supercritical) sharpness for level set percolation of Gaussian fields \cite{DGRS19, Sev21} and the vacant set of random interlacements \cite{DGRST23a,DGRST23b,DGRST23c}. 
In a high level, this approach is suitable to models that can be very well approximated locally by a family of finite-range models. However, in implementing this strategy, one runs into several issues coming from the local correlations of the model. Both tasks of finding an effective finite range approximation and controlling the local correlations are highly model-dependent and can vary in difficulty.

We now describe in more details each step of this strategy. First, one constructs a family of \emph{truncated} models $(\pazocal{V}_{N})_{N}$ -- i.e.~$\pazocal{V}_N$ for which regions within distance larger than $N$ are independent -- that well approximates the original model $\pazocal{V}$ locally and for which one can prove sharpness by adapting classical techniques. Then one proves a \emph{global comparison} result -- which may be of independent interest, see Theorem~\ref{thm:comparison} -- stating that connection probabilities for $\pazocal{V}$ and $\pazocal{V}_N$ are comparable at \emph{arbitrary scales}, provided the use a small (for $N$ large) sprinkling, i.e.~a slight change in the parameter $p$. 
The proof of this global comparison result, which is the heart of the approach, goes by looking at the function $(N,p)\mapsto \P[\pazocal{V}_N(p)\in A]$, for $A$ a connection event, and showing that some sort of ``derivative'' in $N$ is much smaller than the derivative in $p$. This inequality is proved by comparing different types of pivotality events through a delicate \emph{local surgery procedure}, where the pivotality event corresponding to the derivative in $N$ comes with an expensive ``local disagreement'' price stemming from the fact that $\pazocal{V}_N$ locally approximates $\pazocal{V}$ well enough. This technique is somewhat reminiscent of the differential inequalities used by Aizenman \& Grimmett \cite{AizGri91} to prove strict inequalities for critical points of enhanced percolation models -- see also \cite{menshikovstrict}. 

A direct consequence of the global comparison result is that one can transfer to $\pazocal{V}$ all the bounds on connection probabilities which are valid for $\pazocal{V}_N$ due the sharpness of its phase transition. However, local uniqueness is not a connection event (in fact, it is not even monotonic). In order to prove local uniqueness, we first observe that disconnection probabilities decay very fast in the whole supercritical regime $p>p_c$ -- in our case, the decay is exponential in the surface, see Corollary~\ref{cor:decay_disconnection}. This implies that, with high probability, every box of radius $\ell=C(\log L)^{\frac{1}{d-1}}$ in $\Lc_L$ is connected to infinity in $\pazocal{V}(p)$, leading to a sort of coarse version of the ``everywhere percolating'' picture from \cite{benjaminitassion17}, where it is proved that, with a small Bernoulli sprinkling, all these giant components get connected to each other. We adapt the proof of \cite{benjaminitassion17} to our context in order to obtain the desired local uniqueness event -- other adaptations of their techniques can be found in \cite{DGRS19,CMT21,DGRST23b} as well. Once local uniqueness is proved, we deduce Theorem~\ref{thm:sharpness} through a renormalization argument, as mentioned above.

In implementing the strategy above, one runs into problems that arise due to (rare, but existent!) local degeneracies in the Voronoi tessellation picture, which represent an important extra difficulty when compared with the global comparison proved in \cite{Sev21} for instance. Indeed, the model considered in \cite{Sev21} had a crucially higher ``price of local disagrement'' than that of Voronoi percolation, thus only requiring rather crude control on the local degeneracies when performing the local surgery. For Voronoi percolation however, a lower price of local disagreement requires a  considerably more delicate control on the local degeneracies. For that purpose, we prove a large deviation result on the graph (or chemical) distance of Poisson Voronoi tessellation, which is interesting on its own --  see Proposition~\ref{prop:chemical_dist}.
An even more important difference with \cite{Sev21} is that we are able to prove local uniqueness and not only increasing properties coming directly from the global comparison statement, such as percolation on slabs. \\

\textbf{Organization of the paper.} In Section~\ref{sec:preliminaries} we prove a few results concerning the geometry of Poisson Voronoi tessellation, which will be useful in the subsequent sections. In Section~\ref{sec:comparison} we introduce a truncated version of Voronoi percolation and prove our global comparison result. In Section~\ref{sec:loc_uniq} we deduce local uniqueness from a strong bound on disconnection events obtained in Section~\ref{sec:comparison}. Finally, in Section~\ref{sec:Grim_Mars} we adapt the proof of Grimmett \& Marstrand to our truncated Voronoi percolation model. \\

\textbf{Notation.} Let $\|\cdot\|$ denote the $\ell^2$ (Euclidean) distance in $\R^d$ and $B_r(x)\coloneqq\{y\in\R^d:~\|y-x\|\leq r\}$ be the corresponding ball of radius $r\geq0$ centered at $x\in \R^d$ -- when $x=0$, we may simply write $B_r$. Given two sets $X,Y\subset \R^d$, we denote by $d(X,Y)\coloneqq \inf_{x\in X,y\in Y} \|x-y\|$ the distance between $X$ and $Y$. We may also consider the $\ell^\infty$ norm $\|\cdot\|_\infty$ and its associated distance $d_\infty$ and boxes $\Lc_r(x)\coloneqq x+\Lc_r=x+[-r,r]^d$. We say that two vertices $x,y\in \Z^d$ are $\ast$-linked if $\|x-y\|_\infty\leq 1$, which naturally induces a notion of $\ast$-connectivity in $\Z^d$.
Given a domain $D\subset \R^d$ and a random set $\pazocal{V}\subset \R^d$, we denote by $\{\lr{D}{\pazocal{V}}{X}{Y}\}$ the event that there exists a connected component of $\pazocal{V}\cap D$ intersecting both $X$ and $Y$, and by $\{\nlr{D}{\pazocal{V}}{X}{Y}\}$ its complement.

\section{Poisson--Voronoi tessellation}\label{sec:preliminaries}

In this section we prove a few basic properties of Poisson--Voronoi tessellation on $\R^d$. We collect here some technical results that will be used in the following sections. The reader can skip this section at first reading and refer to it when needed.

Let $\overline{\eta}$ be a Poisson point process of intensity $1$ on $\R^d$. Here we think of $\overline{\eta}$ as taking values in the space $\Omega$ of discrete subsets of $\R^d$, with the standard sigma-algebra.
The Voronoi tessellation associated to $\overline{\eta}$ is the pavement of $\R^d$ by the cells
\begin{equation*}
C(x)\coloneqq\{y\in\R^d:\, d(y,x)\leq d(y,\overline{\eta})\}, ~~~~~ x\in\overline{\eta}.
\end{equation*} 
In this way we obtain the random (embedded) graph $G=G(\overline{\eta})$ with vertex set $\overline{\eta}$ and edges between a pair of points $x,y\in \overline{\eta}$ if the cells $C(x)$ and $C(y)$ share a side.

We start by proving two basic lemmas concerning the size and number of cells intersecting a box.

\begin{lemma} \label{lemma:diam}There exist $C,c>0$ such that for any $L\ge 1$ and $\ell \ge C(\log L) ^ {1/d}$, 
  \begin{equation*}
      \P[\exists x\in \overline \eta : C(x)\cap \Lambda_L\ne \emptyset, \mathrm{diam}(C(x))\ge \ell ]\le e^ {-c\ell ^ d}.
  \end{equation*}
  \end{lemma}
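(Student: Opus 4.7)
The plan is to reduce the event to the existence of a large empty ball located near $\Lambda_L$, and then conclude by a union bound over a discrete net.

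The key geometric observation is that if a Voronoi cell $C(x)$ has diameter at least $\ell$, then since $x\in C(x)$, triangle inequality applied to two points of $C(x)$ at mutual distance $\geq \ell$ forces the existence of some $y\in C(x)$ with $\|y-x\|\geq \ell/2$. By convexity of Voronoi cells, the segment $[x,y]$ lies in $C(x)$, so the point $w$ on this segment with $\|w-x\|=\ell/4$ satisfies $w\in C(x)$. This means $x$ is (one of) the nearest Poisson point(s) to $w$, hence the open ball $B_{\ell/4}(w)$ contains no point of $\overline\eta$.

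To localize $w$, I would first control the high-probability event $E$ that every $z\in \Lambda_L$ lies within distance $\ell/2$ of some Poisson point. On $E$, any cell $C(x)$ meeting $\Lambda_L$ has its nucleus $x\in \Lambda_{L+\ell/2}$, so the center $w$ constructed above lies in $\Lambda_{L+\ell}$. The event $E^c$ is itself of the form ``there exists an empty ball of radius $\ell/2$ centered in $\Lambda_L$'', so it will be handled by the same union bound as the main event.

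For the union bound, cover $\Lambda_{L+\ell}$ by a lattice $\cN$ of spacing $\ell/100$, which has cardinality $\leq C(L/\ell)^d\leq CL^d$. Whenever there is some $w\in \Lambda_{L+\ell}$ with $B_{\ell/4}(w)\cap\overline\eta=\emptyset$, a nearest lattice point $w'\in\cN$ satisfies $B_{\ell/8}(w')\cap\overline\eta=\emptyset$; this has probability $e^{-c\ell^d}$ for some $c=c(d)>0$, by the volume of the ball and the Poisson void probability. A union bound yields an upper bound of order $L^d e^{-c\ell^d}$, and for $\ell\geq C(\log L)^{1/d}$ with $C$ sufficiently large this is bounded by $e^{-c'\ell^d}$, as required.

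The proof is essentially routine once the geometric reduction is set up; the only step that requires a little care is identifying the empty ball $B_{\ell/4}(w)$, which uses convexity of Voronoi cells. No serious obstacle is expected.
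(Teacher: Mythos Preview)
Your proof is correct and follows essentially the same strategy as the paper: a large Voronoi cell forces a nearby large ball empty of Poisson points, and a union bound over a lattice finishes. The paper's version reaches the empty ball via the two diametral points of the cell (rather than convexity and the nucleus) and sums over the possible integer values $k\ge\ell$ of the diameter, but the idea is the same.
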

\begin{proof}
    Assume the event $\pazocal F\coloneqq \{\exists x\in \overline \eta : C(x)\cap \Lambda_L\ne \emptyset, \mathrm {diam}(C(x))\ge \ell\}$ occurs and let $x\in\overline\eta$ be such a point.
    Let $y,z\in C(x)$ such that $\|z-y\|=\mathrm{diam} (C(x))\coloneqq k\ge \ell$. We claim that the event $\{\overline \eta \cap B_{k/3}(y)=\emptyset\}\cup \{\overline \eta \cap B_{k/3}(z)=\emptyset\}$ occurs. Indeed, if this event does not occur, then it would imply that $\|x-y\|\le k /3$ and $\|x-z\|\le k/3$ and contradict that  $\|y-z\|\ge k$.
    Hence,
\begin{equation*}
   \pazocal F\subset \bigcup _{k\ge \ell} \, \bigcup_{x\in \frac{k}{6\sqrt{d}}\Z^ d\cap \Lambda_{L+k}}\{(x+B_{k/6})\cap \overline \eta =\emptyset\}.
\end{equation*}
It yields that 
\begin{equation*}
    \P[\pazocal F]\le \sum_{k\ge \ell } (12\sqrt{d})^ d \left (\frac{L+k}{k}\right)^ de^ {-\alpha_dk^ d/6^d}\le e^ {-c\ell ^ d}
\end{equation*}
where we have used that $\ell \ge C(\log L) ^ {1/d}$ in the last inequality.
\end{proof}

\begin{lemma}\label{lemma:intersection boundary}
    There exist $C,c>0$ such that for all $L\ge 1$
    \begin{equation*}
        \P[|\{x\in\overline \eta : C(x)\cap \Lambda_L\}| \ge CL^ {d}]\le e^ {-cL^ d}.
    \end{equation*}
\end{lemma}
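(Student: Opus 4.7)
The plan is to decompose cells intersecting $\Lambda_L$ according to whether they come from a Poisson point close to $\Lambda_L$ or from one far away. In the latter case the cell must have large diameter, which is precisely the event controlled by Lemma~\ref{lemma:diam}. Once only nearby points contribute, we reduce to bounding the number of Poisson points in a box of side $O(L)$, which is straightforward by Poisson concentration.

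More concretely, set $\ell = L$ and consider the event
$$\pazocal F \coloneqq \{\exists x\in \overline \eta : C(x)\cap \Lambda_L\ne \emptyset,\ \mathrm{diam}(C(x))\ge L\}.$$
For $L$ larger than some absolute constant, we have $L\ge C(\log L)^{1/d}$ so Lemma~\ref{lemma:diam} applies and yields $\P[\pazocal F] \le e^{-cL^d}$. For smaller $L$ the conclusion of the lemma is trivial by choosing the constant $C$ in the statement sufficiently large, so we focus on $L$ large. On the complement $\pazocal F^{c}$, any $x\in\overline\eta$ with $C(x)\cap \Lambda_L\ne \emptyset$ satisfies $d(x,\Lambda_L)\le \mathrm{diam}(C(x))\le L$, hence $x\in \Lambda_{2L}$. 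Therefore on $\pazocal F^c$,
$$|\{x\in\overline \eta : C(x)\cap \Lambda_L \ne \emptyset\}| \le |\overline \eta \cap \Lambda_{2L}|.$$

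The right-hand side is a Poisson random variable with mean $(4L)^d$. By the standard Chernoff bound for the Poisson distribution,
$$\P[|\overline \eta \cap \Lambda_{2L}| \ge 2\cdot 4^d L^d] \le e^{-c'L^d}$$
for some $c'>0$. Taking $C\coloneqq 2\cdot 4^d$, a union bound with the estimate on $\pazocal F$ gives
$$\P[|\{x\in\overline \eta : C(x)\cap \Lambda_L \ne \emptyset\}| \ge CL^d] \le \P[\pazocal F] + e^{-c'L^d} \le e^{-cL^d}$$
after adjusting constants. There is no significant obstacle here; the only mildly delicate point is remembering that the center of a cell intersecting $\Lambda_L$ may itself be far from $\Lambda_L$, which is exactly why we invoke Lemma~\ref{lemma:diam} to rule out the anomalous long-diameter cells before counting centers.
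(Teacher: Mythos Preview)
Your proof is correct and follows essentially the same approach as the paper: control large-diameter cells via Lemma~\ref{lemma:diam}, so that on the complement all contributing centers lie in $\Lambda_{2L}$, and then bound $|\overline\eta\cap\Lambda_{2L}|$ by Poisson concentration. The paper's write-up is more compressed (a single inclusion plus an appeal to Lemma~\ref{lemma:diam}), but the argument is the same.
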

\begin{proof}
    Note that we have the following inclusion 
    \begin{equation*}
        \{|\{x\in\overline \eta : C(x)\cap \Lambda_L\}| \le 2^{d+1}L^ {d}\}\subset \{|\overline \eta \cap \Lambda_{2L}| \le 2^{d+1}L^ d\}\cap \{\forall x\in \overline \eta : C(x)\cap \Lambda_L\ne \emptyset, \mathrm{diam}(C(x))\le L \}.
    \end{equation*}
    We conclude by applying Lemma \ref{lemma:diam}.
\end{proof}

The following proposition states that, with probability exponentially close to one, the chemical distance in $G$ is comparable with the Euclidean distance. This result will be used in Section~\ref{sec:comparison}, where the exponent $M$ in \eqref{eq:chem_dist} below will be crucial.

\begin{proposition}\label{prop:chemical_dist}
    For $R,M \geq 2$, let $\pazocal{D}(R,M)$ be the event that for every pair of vertices $x,y\in \Lambda_R$ there exists a path of length at most $M$ in $G(\overline{\eta}_{|\Lambda_{2R}}\cup \omega)\cap \Lambda_{2R}$ between $x$ and $y$ for any point configuration $\omega$ on $\R^d\setminus \Lambda_{2R}$. 
    Then there exist constants $C_0,c_0\in(0,\infty)$ such that for all $R\geq 1$ and $C_0R\leq M\leq c_0 R^d$,
    \begin{equation}\label{eq:chem_dist}
        \P[\pazocal{D}(R,M)]\geq 1-e^{-c_0M}.
    \end{equation}
\end{proposition}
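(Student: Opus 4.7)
The plan is to perform a single-scale renormalization on coarse cubes of large constant side $K=K(d)$, declaring a cube \emph{good} when it is geometrically well-populated and surrounded only by small Voronoi cells, and then showing that $d_G$ is Lipschitz in the Euclidean distance up to a correction controlled by subcritical bad-cluster sums. Concretely, partition $\Lambda_{2R}$ into cubes $B_k=k+[0,K)^d$, $k\in K\Z^d$, and call $B_k$ good if: (i) every unit subcube of $B_k$ meets $\overline\eta$; (ii) $|\overline\eta\cap B_k^+|\le 2(3K)^d$, where $B_k^+$ is the $K$-enlargement of $B_k$; (iii) every cell of $G(\overline\eta)$ meeting $B_k^+$ has diameter at most $K/10$. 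Poisson concentration controls (i)-(ii) while Lemma~\ref{lemma:diam} controls (iii), yielding $\P[B_k\text{ bad}]\le e^{-cK^d}$; since goodness only depends on $\overline\eta$ in the $3K$-enlargement of $B_k$, the indicator field is $m$-dependent for fixed $m=O(1)$. Liggett--Schonmann--Stacey then stochastically dominates bad cubes by a highly subcritical Bernoulli site percolation on the coarse grid (for $K$ large), so any $\ast$-connected bad cluster $\mathcal{C}$ in the coarse grid satisfies $\P[|\mathcal{C}(k)|\ge s]\le e^{-cs}$ uniformly in $k$.

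The crucial robustness observation is that whenever $B_k$ is good and $B_k^+\subset \Lambda_{2R}$, the Voronoi tessellation inside $B_k$ is identical in $G(\overline\eta)$ and in $G(\overline\eta_{|\Lambda_{2R}}\cup\omega)$ for every exterior $\omega$: by (iii), the nearest $\overline\eta$-point of any $z\in B_k$ lies in $B_k^+\subset \Lambda_{2R}$ at distance $\le K/10$, preventing any exterior point (at distance $\ge K/2$ from $B_k$) from being closer. Conditions (i)-(iii) also imply that two $\ast$-adjacent good cubes are connected via $O(1)$ edges of $G$, so a $\ast$-path of $n$ good cubes yields a path in $G(\overline\eta_{|\Lambda_{2R}}\cup\omega)\cap \Lambda_{2R}$ of length $O(n)$. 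For $x,y\in\Lambda_R\cap\overline\eta$, I follow the $n=O(R/K)$ coarse cubes crossed by $[x,y]$ and detour along the outer good boundary of each bad cluster encountered, yielding
\[
d_G(x,y)\,\le\, C\|x-y\|\;+\;C\sum_{i=1}^{n}|\mathcal{C}(c_i)|\,\mathbf{1}_{c_i\text{ bad}},
\]
where $c_i$ is the $i$-th crossed cube and $\mathcal{C}(c_i)$ is the bad cluster containing $c_i$ (shared clusters are counted with multiplicity, only strengthening the bound).

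Since each summand is subexponential and the cubes are $m$-dependent, a Bernstein-type concentration inequality bounds the probability that the sum exceeds $t$ by $e^{-c'\min(t^2/n,\,t)}$ for deviations $t\ge Cn$. Choosing $C_0$ so that $C\|x-y\|\le M/2$ whenever $M\ge C_0R$, the event $\pazocal{D}(R,M)^c$ forces the sum above to exceed $cM$; since $n=O(R)$ and $M\ge C_0R$ give $M^2/n\gtrsim M$, Bernstein yields a bound $e^{-cM}$ per pair $(x,y)$, and union-bounding over the $O(R^{2d})$ pairs in $\overline\eta\cap\Lambda_R$ (absorbed since $M\gg \log R$) produces $\P[\pazocal{D}(R,M)^c]\le e^{-c_0 M}$ as desired.

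The main obstacle is the uniformity in the adversarial exterior $\omega$: the constructed $G$-path must be simultaneously valid for every $\omega$, which is precisely why condition (iii) is imposed on the $K$-enlargement of each traversed cube and why the path is constrained to the thick interior $\Lambda_{2R-CK}\subset \Lambda_{2R}$. A secondary subtle point is that the failure exponent must be in $M$ and not merely in $R$; this is why we sum cluster sizes along the entire traversed segment and invoke a concentration inequality on subexponentials rather than Peierls-ing only the single largest bad cluster met, which would yield the much weaker $e^{-cM/R}$ bound.
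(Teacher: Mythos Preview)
Your single-scale renormalization has a genuine gap: it cannot produce the exponent $M$ uniformly over the full range $C_0R\le M\le c_0R^d$. The deterministic inequality
\[
d_G(x,y)\;\le\;C\|x-y\|+C\sum_i|\mathcal C(c_i)|\,\mathbf 1_{c_i\text{ bad}}
\]
is only valid when every detour around a bad cluster stays inside $\Lambda_{2R}$ (and, for robustness in $\omega$, visits only good cubes whose $K$-enlargement lies in $\Lambda_{2R}$). This forces each bad cluster meeting $[x,y]\subset\Lambda_R$ to have coarse diameter $\lesssim R/K$. On the complementary ``detour escapes'' event there is no path-length bound at all, and the LSS/Peierls machinery only gives it probability $\le e^{-cR}$ (some bad cluster of size $\gtrsim R/K$ touches the segment), not $\le e^{-c_0M}$. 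Your Bernstein step correctly handles $\P[\text{detour valid},\ \sum>cM]\le e^{-cM}$, but the escape term is the bottleneck: when $M\asymp R^d$ you are off by a factor $R^{d-1}$ in the exponent. In short, a fixed-scale detour confines you to path lengths $O(R)$ before the containment constraint bites, so the argument only proves the proposition in the regime $M=\Theta(R)$.

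The paper avoids this obstruction with a genuinely multi-scale construction. For each discretized point $x_i$ on $[x,y]$ it takes the smallest scale $N_{k_i}=10^{k_i}N_0$ at which the ball $B_{N_{k_i}}(x_i)$ is ``good'' (nonempty core, not too many points), and sets $Z=\bigcup_i B_{N_{k_i}}(x_i)$; the path in $G$ then has length at most $|\overline\eta\cap Z|\le 2\,\mathrm{Vol}(Z)$. The key point is that $Z\subset\Lambda_{2R}$ holds \emph{automatically} whenever $\mathrm{Vol}(Z)\le c_0R^d$, since each ball then has radius $N_{k_i}\le(\mathrm{Vol}(Z)/\alpha_d)^{1/d}\le R$. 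There is no separate escape event to control: the containment constraint and the path-length constraint coincide. A disjointification lemma extracts from $\{\mathrm{Vol}(Z)\ge M\}$ a family of pairwise disjoint bad balls of total volume $\ge c_dM$, and a combinatorial count over admissible radius profiles gives $\P[\mathrm{Vol}(Z)\ge M]\le e^{-cM}$ directly. The variable-scale freedom is exactly what lets the good zone have volume up to order $R^d$ while remaining inside $\Lambda_{2R}$.

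Two smaller issues. First, your condition (iii) refers to cells of $G(\overline\eta)$ and hence depends on $\overline\eta$ globally, so the goodness indicator is not $m$-dependent as claimed; replace (iii) by the local condition that every unit subcube of the $2K$-enlargement meets $\overline\eta$, which implies the cell-diameter bound you need. Second, the variables $|\mathcal C(c_i)|$ are not $m$-dependent (cluster size is a long-range functional), so an off-the-shelf Bernstein inequality does not apply; a direct Peierls count over lattice animals rooted on the segment does yield $\P[\sum_j|\mathcal C_j|>t]\le 2^{n}(Cp)^{t}$, but this should be stated rather than invoked as ``Bernstein''.
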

\begin{remark}
    It would perhaps be more natural to define $\pazocal{D}(R,M)$ replacing $G(\overline{\eta}\cup \omega)$ by $G(\overline{\eta})$. However, it will be convenient for us to work with a local event that depends only on $\overline{\eta}$ restricted to $\Lambda_{2R}$.
\end{remark}

\begin{proof}[Proof of Proposition~\ref{prop:chemical_dist}]
Fix $N_0>0$ large enough, and let $N_k\coloneqq 10^kN_0$. Consider the straight line $[x,y]$ between two points $x, y\in \Lambda_{R+1}$ and take a deterministic $N_0/2$-path $x=x_0,x_1,\cdots,x_n=y$ on $[x,y]$, i.e., $x_i\in[x,y]$ and $\|x_{i+1}-x_i\|\leq N_0/2$ for all $0\leq i\leq n-1$. Notice that we can take $n\leq 2\|x-y\|/N_0$. 
We say that $x_i$ is $k$-good if $\overline{\eta}\cap B_{N_k/2}(x_i)\neq \emptyset$ and $|\overline{\eta}\cap B_{N_k}(x_i)| \leq 2\mathrm{Vol}(B_{N_k})$; otherwise, we say that $x_i$ is $k$-bad. One can easily see that
\begin{itemize}
    \item there exists $a>0$ such that $\P[x_i \text{ is $k$-bad}]\leq e^{-aN_k^d}$ for all $i,k$,
    \item the events $\{x_i \text{ is $k$-bad}\}$ and $\{x_j \text{ is $l$-bad}\}$ are independent if $B_{N_k}(x_i)\cap B_{N_l}(x_j)=\emptyset$.
\end{itemize}
For each integer $1\leq i\leq n$, let $k_i\coloneqq\min\{k\geq 0: \text{ $x_i$ is $k$-good}\}$ and define the ``good zone'':
\begin{equation}\label{eq:Z_def}
    Z\coloneqq \bigcup_{i=0}^{n} B_{N_{k_i}}(x_i).
\end{equation}
By definition, one has $|Z\cap \overline{\eta}|\leq 2\mathrm{Vol}(Z)$ and the whole line $[x,y]$ is contained in the union of cells centered at $Z\cap \overline{\eta}$. Also notice that $Z\subset \Lambda_{2R}$ as long as $\mathrm{Vol}(Z)\leq c_0 R^d$ with $c_0$ small enough. The following deterministic lemma states that we can extract from $Z$ a region with comparable volume corresponding to the union of disjoint bad balls.
\begin{lemma}\label{lem:Z'_disjoint} 
    There exist constants $c_d>0$ and $C_0=C_0(N_0)<\infty$ such that the following holds. For every $Z$ as in \eqref{eq:Z_def} satisfying $\mathrm{Vol}(Z)\geq M\geq C_0 R$, there exist $I\subset\{0,1,\cdots,n\}$ such that $k_i\geq 1 ~\forall i\in I$, the balls $B_{N_{k_i-1}}(x_i)$, $i\in I$, are all disjoint, and $Z'\coloneqq \sqcup_{i\in I} B_{N_{k_i-1}(x_i)}$ satisfies $\mathrm{Vol}(Z')\geq c_dM$.
\end{lemma}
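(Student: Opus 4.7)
The plan is to first argue that the ``tiny'' balls in the decomposition \eqref{eq:Z_def} (those with $k_i=0$) contribute only a volume proportional to $R$, and then to extract from the remaining ``large'' balls (those with $k_i\geq1$) a disjoint sub-family of comparable total volume via a Vitali-style greedy selection. The hypothesis $M\geq C_0R$ will be used only in the first step.

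For the first step, since $x,y\in\Lambda_{R+1}$ and the $x_i$ are $(N_0/2)$-spaced along $[x,y]$, the number of interpolation points satisfies $n\leq CR/N_0$ for some $C=C(d)$, so the union of the balls with $k_i=0$ has volume at most a constant (depending on $d$ and $N_0$) times $R$. Choosing $C_0$ sufficiently large in terms of $d$ and $N_0$ therefore forces this contribution to be at most $M/2$, and hence $Z_1:=\bigcup_{i:\,k_i\geq1}B_{N_{k_i}}(x_i)$ satisfies $\mathrm{Vol}(Z_1)\geq M/2$.

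For the second step, I would enumerate the indices $i$ with $k_i\geq 1$ in non-increasing order of $k_i$ and build $I$ greedily: add $i$ to $I$ iff $B_{N_{k_i-1}}(x_i)$ is disjoint from every previously selected ball $B_{N_{k_j-1}}(x_j)$, $j\in I$. The selected balls are then pairwise disjoint by construction. The key observation is that any $i\notin I$ with $k_i\geq 1$ is blocked by some $j\in I$ with $k_j\geq k_i$ for which $B_{N_{k_i-1}}(x_i)\cap B_{N_{k_j-1}}(x_j)\neq\emptyset$; by the triangle inequality one then has $\|x_i-x_j\|\leq N_{k_i-1}+N_{k_j-1}\leq 2N_{k_j-1}$, and combining with $N_{k_i}\leq N_{k_j}=10\,N_{k_j-1}$ gives $B_{N_{k_i}}(x_i)\subset B_{12N_{k_j-1}}(x_j)$. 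The same inclusion holds trivially when $i\in I$, so $Z_1\subset\bigcup_{j\in I}B_{12N_{k_j-1}}(x_j)$. Comparing volumes using the disjointness of $\{B_{N_{k_j-1}}(x_j)\}_{j\in I}$ yields $\mathrm{Vol}(Z_1)\leq 12^d\,\mathrm{Vol}(Z')$, which gives the claim with $c_d:=1/(2\cdot 12^d)$.

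There is no substantive obstacle here. The only mildly delicate point is the two-scale bookkeeping just described, namely the need to peel off the deterministic fillers at scale $N_0$ (whose total volume is $O(R)$) before running Vitali on the genuinely large balls -- this is precisely what forces $C_0$ to depend on $N_0$. Once this separation is in place, the factor-$10$ gap between $N_{k_i-1}$ and $N_{k_i}$ leaves ample room for the triangle-inequality inflation, and the Vitali-type selection is completely routine.
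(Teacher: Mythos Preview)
Your proof is correct and follows the same two-step strategy as the paper: first peel off the $k_i=0$ balls (whose total volume is $O_{d,N_0}(R)$, absorbed by taking $C_0$ large), then run a covering-lemma selection on the remaining balls. Your implementation of the second step via the textbook Vitali greedy selection is in fact more streamlined than the paper's scale-by-scale construction and even yields a slightly better constant ($c_d=1/(2\cdot 12^d)$ versus the paper's $1/(2\cdot 30^d)$).
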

We delay the proof of Lemma~\ref{lem:Z'_disjoint} and continue with the proof of Proposition~\ref{prop:chemical_dist}.
Denote by $\alpha_d$ the volume of the unit ball in $\R^d$.
From this lemma, we can deduce that for $M\ge C_0 R$, we have
\begin{equation}
\begin{split}
   \P[\mathrm{Vol}(Z)\geq M]&\le \P\left[\begin{array}{c} \exists I\subset \{0,\dots,n\} \,\exists (n_i)_{i\in I} : \forall i\in I \quad x_i \text{ is $n_i$-bad }, \\\forall i\ne j \in I\quad B_{N_{n_i}}(x_i)\cap B_{N_{n_j}}(x_j)=\emptyset, \\\sum_{i\in I}\alpha_dN_{n_i}^d\ge c_d M \end{array}\right]\\
   &\le\sum_{k\ge \frac {c_d}{\alpha_d}M}\sum_{I\subset  \{0,\dots,n\} }\sum_{\substack{(n_i)_{i\in I}\\\text{ admissible}}}\P[ \forall i\in I \quad x_i \text{ is $n_i$-bad }]\\
   &\le \sum_{k\ge \frac {c_d}{\alpha_d}M}\sum_{I\subset  \{0,\dots,n\} }\sum_{ \substack{(n_i)_{i\in I}\\\text{ admissible}}} e^ {-a k}
   \end{split}
\end{equation}
where we say that $(n_i)_{i\in I}$ is admissible  for fixed $k$ and $I$ if for all $ i\ne j \in I$, $B_{N_{n_i}}(x_i)\cap B_{N_{n_j}}(x_j)=\emptyset$ and $ \sum_{i\in I}N_{n_i}^d=k$.
Clearly, it suffices to determine $N_{n_i}^d$ to determine $n_i$. Hence, the number of admissible sequences of  $(n_i)_{i\in I}$ is at most $k \choose{|I|-1}$. It is easy to check using Stirling's estimate that there exists a constant $\kappa$ depending on $a$ such that if $k\ge \kappa |I|$ then 
\begin{equation*}
    {k \choose{|I|-1}}\le e^ {\frac 1 2a k}.
\end{equation*}
Since $|I|\le n\le 4\sqrt d (R+1)/N_0$ and $k\ge \frac {c_d}{\alpha_d}M\ge\frac {c_d}{\alpha_d} C_0 R$, this condition is fulfilled as long as $C_0$ is large enough.
Combining the two previous inequalities, we conclude that
\begin{equation*}
    \P[\mathrm{Vol}(Z)\geq M]\le \sum_{k\ge \frac {c_d}{\alpha_d}M} 2^ n e^ {-\frac 1 2a k}\le e^ {-c_1 M}.
\end{equation*}
Finally, define for $x,y\in \Lambda_{R+1}$, $M\ge 1$ the event $\pazocal{E}(x,y,M)$ that there exists a path of length at most $M$ in $G(\overline{\eta}_{\Lambda_{2R}}\cup \omega)\cap \Lambda_{2R}$ between the cell of $x$ and $y$ for every point configuration $\omega$ on $\R^d\setminus \Lambda_{2R}$.
We have
\begin{equation*}
    \P[\pazocal{E}(x,y,M)^{\mathrm{c}}] \le \P[ Z\not\subset \Lambda_{2R}]+\P\left[\Vol(Z)\ge \frac M 2\right]\le e^{-c_1c_0R^ d}+ e^ {-c_1 \frac M2}\le e^ {-c_2M},
\end{equation*}
where we used in the second inequality that $Z\subset \Lambda_{2R}$ as long as $\mathrm{Vol}(Z)\leq c_0R^d$, and in the last inequality, we used that $M\le c_0 R^ d$.
Set $\ep \coloneqq e^ {-c_2\frac M{4d}}$ and define 
\[\pazocal{E}(M)\coloneqq \bigcap_{x,y\in \Lambda_{R+1}\cap (\ep \Z^ d)}\pazocal{E}(x,y,M) .\]
In particular, we have
\begin{equation}\label{eq: bound E(M)}
    \P[\pazocal{E}(M)^{\mathrm{c}}]\le (3R\ep^ {-1})^ {2d}\ep ^{4d}\le  3^{2d} R^{2d} e^{-c_2\frac M{2}}.
\end{equation}
Let $x\in \Lambda_R$ and assume that the cell of $x$ does not contain any point in $\ep \Z^ d$. Let $y\in \overline{\eta}$ be the center of the cell of $x$. The point $z\in \ep \Z^ d\cap \Lambda_{R+1}$ which is closest to $y$ is at distance at most $\sqrt d \ep $. Hence, if the cell of $y$ does not contain any point in $\ep \Z^ d$, it implies that there is another point of $\overline \eta$ at distance at most $\sqrt d \ep$ from $z$. It follows that if there exists $x\in \Lambda_R$ such that its corresponding cell does not contain any point in $\ep \Z^d$ then the following event occurs
\begin{equation*}
\pazocal{F}\coloneqq  \bigcup_{z\in\ep \Z ^ d\cap \Lambda_{R+1}}\{|\overline \eta \cap (B_{\sqrt d \ep}+z)| \ge 2\}.
\end{equation*}
Note that for $\lambda\geq0$ one has
$\P[\pazocal{P}(\lambda)\ge 2]= 1- e^ {-\lambda}- \lambda e^{-\lambda}\le \lambda^ 2$,
where $\pazocal{P} (\lambda)$ is a Poisson law of parameter $\lambda$.
This yields
\begin{equation}\label{eq: bound F}
    \P[\pazocal{F}]\le  (3R\ep^ {-1})^d \alpha_d^2(\sqrt d \ep)^ {2d}\le 3^{d}\alpha_d^2 d^d R^d e^ {-c_2\frac M{4}}.
\end{equation}
We conclude using inequalities \eqref{eq: bound E(M)} and \eqref{eq: bound F} together with the following inclusion
\begin{equation*}
    \pazocal{D}(M,R)\subset \pazocal{F}^{\mathrm{c}}\cap \pazocal{E}(M).
\end{equation*}
\end{proof}

\begin{proof}[Proof of Lemma~\ref{lem:Z'_disjoint}]
We will define $I$ inductively starting with the largest size of ball.
Set \[K\coloneqq\max_{0\le i \le n}k_i.\] Set $\overline I_{K+1}= I_{K+1}=\emptyset$.
Assume $I_{K+1},\dots,I_j$ and $\overline I_{K+1},\dots,\overline I_j$ have been defined for $2\le j\le K+1$.

Let $I_{j-1}\subset \{0,\dots,n\} $ be a maximal subset of  $\{0,\dots,n\}\setminus \cup_{j\le k \le K}\overline I_k$ such that $k_i=j-1$ for all $i\in I_{j-1}$, and $ B_{N_{j-1}(x_i)}\cap  B_{N_{j-1}(x_m)}=\emptyset$ for all $i, m\in I_{j-1} $ with $i\neq m$ (if there are several choices, we pick an arbitrary one). In particular, for every $i\in\{0,\dots,n\}\setminus \cup_{j\le k \le K}\overline I_k$ such that $k_i=j-1$, there exists $m\in I_{j-1}$ such that $B_{N_{j-1}(x_i)}\subset B_{3N_{j-1}(x_m)}$.
Define 
    \[Z_{j-1}\coloneqq \bigcup_{i\in I_{j-1}}B_{N_{j-1}(x_i)}.\] 
    Now, consider the following set of boxes intersecting the zone $Z_{j-1}$
    \[\overline I_{j-1}\coloneqq \{i\in\{0,\dots,n\}\setminus \cup_{j\le k \le K}\overline I_k:B_{N_{k_i}(x_i)}\cap Z_{j-1}\ne \emptyset \}.\]
and 
    \[\overline Z_{j-1}\coloneqq \bigcup_{i\in \overline I_{j-1}}B_{N_{k_i}(x_i)}.\]
It follows from the construction that $I_{j-1}\subset \overline{I}_{j-1}$ and $\{i\in\{0,\cdots,n\}:~k_i\geq j\} \subset \bigcup_{j\leq k \leq K} \overline{I}_k$. Also notice that, for every $i\in \overline{I}_{j-1}$ there exists $m\in I_{j-1}$ such that $B_{N_{k_i}}(x_i)\subset B_{N_{j-1}}(x_m)$. As a conclusion, we have
    \begin{equation*}
        \overline Z_{j-1}\subset \bigcup_{i\in I_{j-1}}B_{3N_{j-1}(x_i)}.
    \end{equation*}
    Since $(B_{N_{j-1}}(x_i))_{i\in I_{j-1}}$ are pairwise disjoint, we have
    \begin{equation*}
    \Vol(\overline Z_{j-1})\le \alpha_d(3N_{j-1})^ d|I_{j-1}|= 3^ d\Vol(Z_{j-1}).
\end{equation*}
Since $(Z_j)_{1\leq j\leq K}$ are pairwise disjoint, we have
\begin{equation}
    \Vol(\cup_{j=1}^ K\overline Z_j)\le  \sum_{j=1}^ K\Vol(\overline Z_j)\le 3^ d\sum_{j=1}^ K\Vol(Z_j)=3^d\Vol(\cup_{j=1}^ KZ_j).
\end{equation}
Finally, since $\{i\in\{0,\cdots,n\}:~k_i\geq 1\} \subset \bigcup_{1\leq k \leq K} \overline{I}_k$, we have
\begin{equation*}
      \Vol(\cup_{j=1}^ K\overline Z_j)\ge \Vol(Z)-\alpha_dnN_0^ {d}\ge \Vol(Z)-2\alpha_dN_0^ {d-1}R\ge \frac 1 2\Vol(Z)
\end{equation*}
where in the last inequality we used that $C_0\ge 2\alpha_dN_0^ {d-1}$.
We conclude by setting $I=\cup_{1\le j \le K}I_j$, $Z'\coloneqq \sqcup_{i\in I} B_{N_{k_i-1}(x_i)}$ and observing that
\begin{equation*}
    \Vol(Z')=\frac{1}{10^d}\Vol(\cup_{j=1}^{K} Z_j) \geq \frac{1}{30^d}\Vol(\cup_{j=1}^{K} \overline{Z}_j) \geq \frac 1 {2(30)^ d}\Vol(Z).
\end{equation*}
\end{proof}

We finish this section with the following technical lemma, which will be used in Section~\ref{sec:loc_uniq}. In words, it says that with very high probability, every large connect set in $B_L$ intersects a positive proportion of regions where the chemical distance is comparable with Euclidean distance.
Given $x\in\R^d$ and an event $\pazocal{A}$ on the space $\Omega$ of discrete subsets of $\R^d$, we denote by $x+\pazocal{A}$ the event consisting of configurations of the form $x+\omega\coloneqq \{x+z,~z\in \omega\}$ for $\omega\in \pazocal{A}$.

\begin{lemma}\label{lemma :GL}There exist $\kappa,c>0$ such that for any $L\ge \ell\ge 1$ large enough, we have
\begin{equation*}
    \P[\cG_{L,\ell,\kappa}]\le e^ {-c\sqrt L}
\end{equation*}
where
    \begin{equation*}
        \cG _{L,\ell,\kappa} \coloneqq  \left\{ \begin{array}{c} \forall\, \Gamma \subset \Z^d \text{ $\ast$-connected set intersecting $\Lambda_{L/\ell}$ such that $|\Gamma|\ge \frac {\sqrt L} {1000 \ell}$,}\\
        \big| \{x\in \Gamma : \ell x+\pazocal{D}(3\ell,\kappa\ell )\text { occurs}\} \big| \ge \frac {|\Gamma|} 2\end{array}\right\}.
    \end{equation*}
    
\end{lemma}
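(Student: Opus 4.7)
Reading the stated bound at face value, one has to estimate the probability of the event that some $\ast$-connected $\Gamma\subset\Z^d$ intersecting $\Lambda_{L/\ell}$ with $|\Gamma|\geq\sqrt L/(1000\ell)$ contains strictly more than $|\Gamma|/2$ vertices $x$ for which the event $\ell x+\pazocal{D}(3\ell,\kappa\ell)$ fails. The plan is a Peierls-type union bound over such $\Gamma$, combined with Chernoff concentration on the indicators $B_x\coloneqq\mathbf{1}_{\ell x+\pazocal{D}(3\ell,\kappa\ell)^{\mathrm{c}}}$, whose exponential decay $\P[B_x=1]\leq e^{-c_0\kappa\ell}$ follows from Proposition~\ref{prop:chemical_dist} applied with $R=3\ell$ and $M=\kappa\ell$, valid as soon as $\kappa\geq 3C_0$ and $\ell$ is large enough so that $\kappa\ell\leq c_0(3\ell)^d$.

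The first step is local independence, based on the remark after Proposition~\ref{prop:chemical_dist}: since the event $\ell x+\pazocal{D}(3\ell,\kappa\ell)$ depends only on $\overline\eta$ restricted to $\ell x+\Lambda_{6\ell}$, the family $(B_x)_{x\in\Z^d}$ has range of dependence bounded by a constant in $\Z^d$-units. I would partition $\Z^d$ into $K=K(d)$ color classes whose points are at $\ell^\infty$-distance at least $13$, so that within each class the $B_x$ are independent. For fixed $\ast$-connected $\Gamma$ of size $n$, the event $\{\sum_{x\in\Gamma}B_x>n/2\}$ forces some color class to carry at least $n/(2K)$ bad vertices, and a Chernoff bound on that independent sum yields
\[
\P\Bigl[\textstyle\sum_{x\in\Gamma}B_x>n/2\Bigr]\leq K\binom{n}{\lceil n/(2K)\rceil}e^{-c_0\kappa\ell\cdot n/(2K)}\leq\bigl(C_1\,e^{-c_2\kappa\ell}\bigr)^n.
\]

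The second step is the union bound: the classical estimate $|\{\ast\text{-connected }\Gamma\ni 0:|\Gamma|=n\}|\leq C_3^n$ gives at most $(2L/\ell+1)^d\,C_3^n$ admissible $\Gamma$ of size $n$ intersecting $\Lambda_{L/\ell}$. Taking $\kappa=\kappa(d)$ large enough that $C_1 C_3\,e^{-c_2\kappa\ell}\leq e^{-\ell}$ (possible since the Chernoff exponent grows linearly in $\kappa\ell$) and summing over $n\geq \sqrt L/(1000\ell)$ produces a convergent geometric series bounded by $(2L/\ell)^d\,e^{-\sqrt L/1000}\leq e^{-c\sqrt L}$ for $L$ large, which is the target bound.

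The main obstacle is precisely this calibration: the Peierls factor $C_3^n$ and the binomial $\binom{n}{\lceil n/(2K)\rceil}$ both grow exponentially in $n$, while the Chernoff saving $e^{-c_0\kappa\ell\cdot n/(2K)}$ has to dominate them both — so $\kappa$ must be taken large enough, depending only on $d$ and on the constants in Proposition~\ref{prop:chemical_dist}, to beat these entropies. Once this is done, the Chernoff exponent evaluated at the threshold $n=\sqrt L/(1000\ell)$ gives exactly the claimed $\sqrt L$ rate, with no further probabilistic input beyond Proposition~\ref{prop:chemical_dist}.
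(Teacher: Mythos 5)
Your proposal is correct and follows essentially the same route as the paper: the single-site bound $\P[\ell x+\pazocal{D}(3\ell,\kappa\ell)^{\mathrm{c}}]\le e^{-c_0\kappa\ell}$ from Proposition~\ref{prop:chemical_dist}, a reduction to a well-separated subfamily of bad sites of proportional size (your colour-class pigeonhole versus the paper's direct extraction of a $12$-separated subset $\Gamma'\subset\Gamma$ with $|\Gamma'|\ge c|\Gamma|$ are the same device), and a Peierls union bound over lattice animals with $\kappa$ chosen large enough to beat the $7^{dk}$-type entropy. Note only that the displayed inequality in the statement should read $\P[\cG_{L,\ell,\kappa}^{\mathrm{c}}]\le e^{-c\sqrt L}$, which is what both you and the paper actually prove.
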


\begin{proof}
    Let $\kappa\ge C_0>0$ be a constant large enough.
    Note that if $x,y \in \Z^ d$ are such that $\|x-y\|\geq 12$ then the events $\ell x+\pazocal{D}(3\ell,\kappa\ell )$ and $\ell y+\pazocal{D}(3\ell,\kappa\ell )$ are independent.
    Let $\Gamma$ be a connected set in $\Z^d$. If $|\{x\in \Gamma : \ell x+\pazocal{D}(3\ell,\kappa\ell )\text { occurs}\}| < |\Gamma|/2$, then one can find a subset $\Gamma'\subset \Gamma$ such that $\|x-y\|\geq 12$ for all distinct  $x,y\in \Gamma'$,  $\ell x+\pazocal{D}(3\ell,\kappa\ell )$ does not occur for all $x\in \Gamma'$ and $|\Gamma'|\geq c|\Gamma|$, with $c=c(d)>0$ depending only on $d$.
    By Proposition \ref{prop:chemical_dist}, it yields that
\begin{equation*}
\begin{split}
   \P\left[ \big| \{x\in \Gamma : \ell x+\pazocal{D}(3\ell,\kappa\ell )\text { occurs}\} \big| < \frac{|\Gamma|}{2}\right] \le 2^{|\Gamma|} \P[ \pazocal{D}(3\ell,\kappa\ell )^{\rm c}]^ {c|\Gamma|} \le e^ {-c\kappa \ell |\Gamma|}.
\end{split}
\end{equation*}
    Since the number of $\ast$-connected subsets of $\Z^d$ of size $k$ containing the origin is at most $7^{dk}$ \cite[(4.24) p81]{Gri99a}, a union bound over all possible set $\Gamma$ gives 
\begin{equation}\label{ineq:G}
    \P[\cG_{L,\ell,\kappa} ^ {\rm c}]\le (2L)^ d\sum_{k\ge \frac {\sqrt L} {1000 \ell}}7^{dk} e^ {-c\kappa\ell k}\le e^ {-c \sqrt L}
\end{equation}
    where we used that $\kappa$ is large enough. 
\end{proof}

\section{Global comparison}\label{sec:comparison}

In this section we compare the Voronoi percolation model $\pazocal{V}$ with its truncated version $\pazocal{V}_N$, for which supercritical sharpness is easier to prove. 

We start by introducing a convenient coupling of $\pazocal{V}(p)$, $p\in[0,1]$. 
Let $\eta$ be Poisson point process of intensity $1$ on $\R^d\times [0,1]$. We denote the points in $\R^d\times[0,1]$ by $(x,t)$, with $x\in\R^d$ and $t\in [0,1]$. For each function $\fp:\R^d\to [0,1]$, define the set of $\fp$-open and $\fp$-closed points given by 
\begin{align*}
\overline{\eta}_o(\fp)&\coloneqq\{x\in \R^d:\, \exists t\in[0,\fp(x)] \text{ such that } (x,t)\in \eta\} ~~\text{ and}\\ 
\overline{\eta}_c(\fp)&\coloneqq\{x\in \R^d:\, \exists t\in(\fp(x),1] \text{ such that } (x,t)\in \eta\}
\end{align*}
When $\fp$ is the constant function equal to $p\in[0,1]$, we simply write $\overline{\eta}_o(p)$ and $\overline{\eta}_c(p)$.
We can now set
\begin{equation*}
\pazocal{V}(p)\coloneqq\{y\in \R^d:~ d(y,\overline{\eta}_o(p))\leq d(y,\overline{\eta}_c(p))\}.
\end{equation*}
Notice that conditionally on $\overline{\eta}\coloneqq \overline{\eta}_o(1)$, this corresponds to the standard coupling of Bernoulli site percolation on the graph $G(\overline{\eta})$ defined in Section~\ref{sec:preliminaries}.
Furtheremore, $\pazocal{V}(p)$ is increasing in $\eta_o(p)$ and decreasing in $\eta_c(p)$, which are independent Poisson point processes for any fixed $p$, thus implying that the model satisfies the FKG inequality, see e.g.~\cite[Theorem 20.4]{last_penrose_2017}.

We now construct the truncated model using the same Poisson point process $\eta$. For every $N\geq1$ and $x\in N\Z^d$, we define $\Lu_N(x)\coloneqq x+[0,N)^d$ -- note that these boxes perfectly pave $\R^d$. Given $\fp:\R^d\to [0,1]$, we define $\overline{\eta}_o^N(\fp) \coloneqq \overline{\eta}_o(\fp)$ and, for every $x\in N\Z^d$,
\begin{align}\label{eq:def_eta_N}
\overline{\eta}_c^N(\fp)\cap \Lu_N(x) &\coloneqq
\begin{cases}
    \overline{\eta}_c(\fp)\cap \Lu_N(x),    & \text{ if } |\overline{\eta}_c(\fp)\cap \Lu_N(x)|\leq 2N^{d},\\
    \Lu_N(x),     &  \text{ otherwise.}
\end{cases}
\end{align}
In words, $(\overline{\eta}^N_o(\fp),\overline{\eta}^N_c(\fp))$ is equal $(\overline{\eta}_o(\fp),\overline{\eta}_c(\fp))$ on $N$-boxes with at most $2N^{d}$ many closed points, while the other boxes are considered fully closed.
We can now define our truncated model as
\begin{equation}\label{eq:def_truncated_Voronoi}
\pazocal{V}_N(p)\coloneqq\big\{y\in \R^d:~ d(y,\overline{\eta}_o^N(p))\leq \min\{d(y,\overline{\eta}^N_c(p)), N\} \big\}.
\end{equation}
Notice that even though the boxes of the form $\Lu_N(x)$, $x\in N\Z^d$, which are the building blocks in the definition of $\pazocal{V}_N(p)$, are not exactly symmetric due to their boundaries, almost surely all points in the Poisson point process $\overline{\eta}$ fall in the interior of these boxes. In particular, our model $\pazocal{V}_N(p)$ is invariant under all the symmetries of $N\Z^d$. One can easily check that, for every $D\subset \R^d$, the set $\pazocal{V}(p)\cap D$ depends only on the restriction of $\eta$ to $(D+\Lc_{2N}(y))\times[0,1]$.
In particular, $\pazocal{I}_k$ is independent in regions with pairwise $\ell^\infty$ distance at least $4N$. 
Furthermore, $\pazocal{V}_N(p)$ is increasing in $\overline{\eta}_o(p)$ and decreasing in $\overline{\eta}_c(p)$, thus it also satisfies the FKG inequality.
Finally, the bound on the number of closed points in a box implies that $\pazocal{V}_N(p)$ satisfies a sort of sprinkled finite energy property -- see Proposition~\ref{prop:domsto}. 

The features of our truncated model $\pazocal{V}_N(p)$ pointed out above allow us to adapt several classical techniques from Bernoulli percolation. 
First, it follows from standard arguments that the model has a non-trivial phase transition, namely $p_c(N)\in(0,1)$, where 
\begin{equation*}
p_c(N)\coloneqq\inf\big\{p\in \R:~ \P[\pazocal{V}_N(p) \text{ percolates}]>0\big\}.
\end{equation*}
More importantly, we are able to adapt the proof of \cite{GriMar90} to our truncated model.
In particular, we prove the following result, whose proof is presented in Section~\ref{sec:Grim_Mars}.

\begin{theorem}\label{thm:disc_decay_truncated}
For every $N\geq1$ and $p>p_c(N)$, there exists $c=c(p,N)>0$ such that for $R$ large enough,
\begin{equation}\label{eq:disc_decay_truncated} 
    \P[\lr{}{\pazocal{V}_N(p)}{B_R}{\infty}]\ge 1-e^ {-cR^ {d-1}}.
\end{equation}
\end{theorem}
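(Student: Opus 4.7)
The plan is to follow a two-stage approach: first adapt the classical Grimmett--Marstrand proof to the truncated model $\pazocal{V}_N$ in order to obtain percolation in a finite slab throughout the supercritical regime $(p_c(N),1]$, and then bootstrap this to the surface-order decay \eqref{eq:disc_decay_truncated} through a Pisztora-type renormalization. The three structural features of $\pazocal{V}_N(p)$ emphasised in this section will carry the argument: FKG, finite range of dependence $4N$, and the sprinkled finite-energy property of Proposition~\ref{prop:domsto}.

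Fix $p>p_c(N)$ and an intermediate $p_0\in(p_c(N),p)$. The first stage, to be developed in Section~\ref{sec:Grim_Mars}, is the analogue of \cite{GriMar90} for $\pazocal{V}_N$: at $p_0$ one already has positive probability of percolation in a thick enough slab $\R^2\times[0,M]^{d-2}$, and this then upgrades to percolation at $p$ via a small sprinkling. The classical ingredients to reproduce are (i) the construction, via FKG and a positive-density seed cluster in a half-space, of ``block connection'' events linking opposite faces of a large box; (ii) an iterative extension of these block events in which the admissible local modifications of the configuration are realised by adding a bounded number of extra $p$-open Poisson points, in the form allowed by Proposition~\ref{prop:domsto}, which plays here the role of finite energy; and (iii) uniqueness of the infinite cluster, obtained through a Burton--Keane trifurcation argument with the same sprinkled surgery replacing the classical single-site flips.

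Equipped with slab percolation at $p$, introduce a renormalisation scale $K\gg N$ and declare a box $\Lu_K(x)$, $x\in K\Z^d$, to be \emph{good} when, inside the enlarged box $\Lu_K(x)+\Lc_{2K}$, $\pazocal{V}_N(p)$ contains a unique cluster of diameter $\geq K/2$ that crosses every face of $\Lu_K(x)$ and absorbs every cluster of diameter $\geq K/10$ meeting $\Lu_K(x)$. Slab percolation together with FKG ensure that $\P[\Lu_K(x)\text{ is good}]\geq 1-\ep$ with $\ep$ arbitrarily small once $K$ is large enough. Since goodness depends only on $\eta$ restricted to $(\Lu_K(x)+\Lc_{4K})\times[0,1]$, the good-box indicators form a finite-range-dependent field on the renormalised lattice $K\Z^d$ and, by a Liggett--Schonmann--Stacey comparison, stochastically dominate Bernoulli site percolation on $\Z^d$ at density arbitrarily close to $1$. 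A standard Peierls argument then concludes: on $\{B_R\not\leftrightarrow \infty\}$, for $R\gg K$, there must exist a $\ast$-connected surface of bad renormalised boxes separating $B_R$ from infinity in $\R^d$; by isoperimetry such a surface contains at least $c_d(R/K)^{d-1}$ boxes, and using the standard enumeration $\leq C^n$ of $\ast$-connected surfaces of size $n$ through a given box, together with the finite-range-dependence of the good-box field, a union bound gives $\exp(-cR^{d-1})$ as soon as $\ep$ is small enough.

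The main obstacle is the first stage. The classical Grimmett--Marstrand surgery relies heavily on being able to alter the state of a single site at negligible cost, whereas for $\pazocal{V}_N$ the admissible local modifications are precisely those obtained by turning on a few additional $p$-open Poisson points in a window, as quantified by Proposition~\ref{prop:domsto}. Every step of the original proof---including the Burton--Keane trifurcation---must therefore be rephrased in this restricted language; moreover the geometry of Voronoi cells means that ``local'' modifications can influence a macroscopic region, so the effective window over which one sprinkles must be chosen with care, together with a deterministic control on the cells involved (typically through Lemma~\ref{lemma:diam}), to keep the finite-energy cost comparable to the classical Bernoulli case.
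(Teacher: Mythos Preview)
Your plan is workable but takes a more circuitous route than the paper. The paper does \emph{not} separate the argument into ``slab percolation first, then a Pisztora-type static renormalisation'': it obtains the surface-order bound directly from the Grimmett--Marstrand dynamic exploration. Concretely, the paper proves (Proposition~\ref{prop:interGM}) that for any $p>p_c(N)$ and any $\delta>0$ one has $\P[\lr{}{\pazocal{V}_N(p)\cup\omega_\delta^N}{B_R}{\infty}]\ge 1-e^{-cR^{d-1}}$, and then absorbs the Bernoulli sprinkling $\omega_\delta^N$ back into the model via the stochastic domination of Proposition~\ref{prop:domsto}. The exploration itself (seed-to-seed steering, Lemmas~\ref{lem:seed} and~\ref{lem:sprinklingseed}) dominates a supercritical Bernoulli site process on the renormalised lattice, and a one-line lemma (Lemma~\ref{lem:explor}) then gives the $e^{-cR^{d-1}}$ decay without any separate Peierls/Liggett--Schonmann--Stacey stage and without invoking local uniqueness or Burton--Keane at all.

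A couple of smaller points. First, your Stage~2 asserts that ``slab percolation together with FKG'' yields the good-box event, including absorption of every mesoscopic cluster; this is the genuinely non-trivial Pisztora step and needs more than FKG --- you are effectively re-deriving local uniqueness for $\pazocal{V}_N$, which the paper circumvents entirely. Second, your closing paragraph worries about Voronoi cells influencing macroscopic regions and invokes Lemma~\ref{lemma:diam}; for $\pazocal{V}_N$ this concern is moot, since the truncation forces the model to be exactly $4N$-dependent, and the sprinkled finite energy (Proposition~\ref{prop:domsto}) acts on whole $N$-boxes rather than on individual Poisson points. In short: your outline would succeed, but it reproduces two substantial blocks of theory (Burton--Keane and Pisztora) that the paper's direct dynamic renormalisation simply does not need.
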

\begin{remark}\label{rem:sharpness_truncated}
    We can in fact prove more about $\pazocal{V}_N$, i.e. it satisfies the properties stated in Theorems~\ref{thm:sharpness} and \ref{thm:unique}. However, we will only need \eqref{eq:disc_decay_truncated} -- see also Remarks~\ref{rem:comparison_extensions} and \ref{rem:decay_extensions} below.
\end{remark}

Notice that by definition $\pazocal{V}_N(p)\subset \pazocal{V}(p)$ almost surely.
Hence, the bound \eqref{eq:disc_decay_truncated} from Theorem~\ref{thm:disc_decay_truncated} also holds with $\pazocal{V}_N(p)$ replaced by $\pazocal{V}(p)$ for all $p>\inf_N p_c(N)\geq p_c$.
The following result, which is the heart of our proof, guarantees that the truncated model $\pazocal{V}_N$ approximates well the original model $\pazocal{V}$ globally, provided we use a small change in the parameter $p$. In particular, this allows us to transfer Theorem~\ref{thm:disc_decay_truncated} from $\pazocal{V}_N$ to $\pazocal{V}$ throughout the supercritical regime.

\begin{theorem}[Global comparison]\label{thm:comparison}
For every $\varepsilon>0$ there exists $N_0=N_0(\varepsilon)\in(0,\infty)$ such that for every $N\geq N_0$, every $p\in[2\varepsilon,1-2\varepsilon]$ and every  connection event $A=\{\lr{}{}{B_r}{\partial B_R}\}$, with $1\leq r<R<\infty$, one has
\begin{equation*}
    \P[\pazocal{V}(p)\in A]\leq \P[\pazocal{V}_N(p+\varepsilon)\in A].
\end{equation*}
In particular, one has $\lim_{N\to\infty} p_c(N)=p_c$.
\end{theorem}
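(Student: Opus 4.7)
The plan is to build an interpolation between the two models and prove a differential inequality in which the derivative relaxing the truncation is much smaller than the derivative in $p$. Sample i.i.d.\ uniform marks $(U_x)_{x\in N\Z^d}$ on $[0,1]$, independent of $\eta$, and define $\pazocal{V}^{(t)}_N(p)$ exactly as in \eqref{eq:def_truncated_Voronoi} but with the truncation \eqref{eq:def_eta_N} applied only in the boxes $\Lu_N(x)$ satisfying $U_x\leq t$. This interpolates $\pazocal{V}^{(0)}_N(p)=\pazocal{V}(p)$ and $\pazocal{V}^{(1)}_N(p)=\pazocal{V}_N(p)$, and the function $F(t,p)\coloneqq \P[\pazocal{V}^{(t)}_N(p)\in A]$ is non-increasing in $t$ (more truncation shrinks the set, and $A$ is increasing) and non-decreasing in $p$.

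The core of the proof is the differential inequality
\begin{equation*}
\Big|\tfrac{\partial F}{\partial t}(t,p)\Big|\leq \theta_N\,\tfrac{\partial F}{\partial p}(t,p),\qquad \theta_N\coloneqq C\,N^d\,e^{-cN^d},
\end{equation*}
with $C,c>0$ depending only on $d$. A Russo-type formula identifies $\partial_p F$ as a sum of cell-level pivotal probabilities, while Russo's formula applied to the Bernoulli marks $(U_x)$ identifies $\partial_t F$ as a sum over $x\in N\Z^d$ of the probability that the box $\Lu_N(x)$ is simultaneously ``bad''---meaning the truncation \eqref{eq:def_eta_N} actually changes the configuration in $\Lu_N(x)$, an event of probability at most $e^{-cN^d}$ by Poisson concentration and Lemma~\ref{lemma:diam}---and ``pivotal for $A$'', in the sense that restoring its non-truncated state flips the event. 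Independence between the Poisson configuration inside and outside $\Lu_N(x)$, combined with a local surgery that matches each bad--pivotal box to one of the $\sim N^d$ cell-level pivotal witnesses contributing to $\partial_p F$, produces the prefactor $\theta_N=CN^d e^{-cN^d}$, crucially uniform in $R$.

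Granting the inequality, set $\alpha(t)\coloneqq \theta_N t$ and observe that $t\mapsto F(t,p+\alpha(t))$ has non-negative derivative $\partial_t F+\theta_N\partial_p F\geq 0$, hence is non-decreasing. Choosing $N_0(\varepsilon)$ large enough so that $\theta_N\leq \varepsilon$ for $N\geq N_0$, this gives
\[\P[\pazocal{V}(p)\in A]=F(0,p)\leq F(1,p+\theta_N)\leq F(1,p+\varepsilon)=\P[\pazocal{V}_N(p+\varepsilon)\in A],\]
which is the desired comparison. The ``In particular'' statement then follows: $\pazocal{V}_N(p)\subset\pazocal{V}(p)$ yields $p_c(N)\geq p_c$, while applying the comparison to $A_R=\{\lr{}{}{B_1}{\partial B_R}\}$ for any $p>p_c$ and $\varepsilon>0$, combined with $\P[\pazocal{V}(p)\in A_R]\geq c(p)>0$ uniformly in $R$ (coming from the existence of an infinite cluster of $\pazocal{V}(p)$), forces $\pazocal{V}_N(p+\varepsilon)$ to percolate for all $N\geq N_0(\varepsilon)$, so $p_c(N)\leq p+\varepsilon$. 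Together these give $\lim_N p_c(N)=p_c$.

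The main obstacle lies in the local surgery needed to establish the differential inequality. A bad $N$-box can be highly degenerate---with macroscopically large Voronoi cells or enormously many closed Poisson points---so restoring it to the non-truncated state can perturb cells in a neighbourhood of $\Lu_N(x)$, not only inside the box. Quantitatively controlling this is precisely where the chemical-distance estimate of Proposition~\ref{prop:chemical_dist} enters: the exponent $M$ in \eqref{eq:chem_dist} encodes the ``price of local disagreement'' that must remain small enough to be absorbed by the bad-box factor $e^{-cN^d}$. As pointed out in the introduction, the Voronoi setting has a lower price of local disagreement than the Gaussian model of \cite{Sev21}, which is exactly why the finer estimate of Proposition~\ref{prop:chemical_dist} is needed here.
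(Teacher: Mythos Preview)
Your overall plan---interpolate between the two models and compare a ``truncation derivative'' with the $p$-derivative---is in the right spirit, but the argument has real gaps at exactly the places where the paper does the work.

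First, the interpolation is not well-defined as written. The truncation in \eqref{eq:def_eta_N}--\eqref{eq:def_truncated_Voronoi} has two parts: the per-box over-full rule \eqref{eq:def_eta_N}, which can indeed be switched on box by box via your marks $U_x$, and the global threshold $\min\{\cdot,N\}$ in \eqref{eq:def_truncated_Voronoi}, which cannot. With the threshold always on you do not recover $\pazocal{V}(p)$ at $t=0$; with it always off you do not recover $\pazocal{V}_N(p)$ at $t=1$. Whatever fix you choose, the intermediate models $\pazocal{V}^{(t)}_N$ for $t<1$ have infinite-range dependence, so the claimed ``independence between the Poisson configuration inside and outside $\Lu_N(x)$'' used to factor out the bad-box probability $e^{-cN^d}$ is not available.

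Second, and more seriously, the inequality $|\partial_t F|\leq \theta_N\,\partial_p F$ is the whole proof, and your sketch underestimates the surgery cost by an exponential factor. The derivative $\partial_p F$ counts configurations with a \emph{single} pivotal cell, but a bad-and-pivotal box only yields a coarse-pivotal picture in which $\mathcal{C}_r$ and $\mathcal{C}_R$ may be at chemical distance of order $N$ (indeed, Proposition~\ref{prop:chemical_dist} is needed just to rule out order $N^d$). Reducing to a single pivotal cell therefore requires opening an entire path except for one cell, and any such surgery costs at least $e^{cN}$; the prefactor $\theta_N=CN^d e^{-cN^d}$ you state should at best be $e^{CN^{\gamma}-cN^d}$ for some $\gamma\in(d-1,d)$, and establishing even that is essentially the content of the paper's local-surgery lemma.

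The paper sidesteps both issues by never comparing $\pazocal{V}$ with $\pazocal{V}_N$ directly. It proves instead that $\P[\pazocal{V}_{2N}(p)\in A]\leq \P[\pazocal{V}_N(p+\varepsilon_N)\in A]$ for a \emph{finite} sprinkling $\varepsilon_N=e^{-N^{1-\delta}}$ (Proposition~\ref{prop:comp_N_2N}) and then telescopes over dyadic scales $2^kN$. Working between two finite-range models makes the decoupling of the bad-box event from coarse pivotality at scale $16N$ genuinely clean, and the finite sprinkling allows the surgery (Lemma~\ref{lem:conv_bound}) to open an entire path of length $N^\gamma$ at once, at cost $\varepsilon_N^{\,N^\gamma}=e^{-N^{\gamma'}}$ with $\gamma'<d$, rather than having to isolate one pivotal cell for an infinitesimal $dp$.
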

\begin{remark}\label{rem:comparison_extensions}
    Our proof actually shows that one can take $N_0(\varepsilon)=C_\delta(\log \varepsilon^{-1})^{1+\delta}$ for any $\delta>0$. Furthermore, it is straightforward to adapt the proof to more general connection events of the form $A=\{\lr{D}{}{S_1}{S_2}\}$, as long as there is a ``uniformly flat surface'' in $D$ separating $S_1$ and $S_2$, see \cite[Remark 3.6]{Sev21}. In particular, one could consider connection events between balls restricted to 2D slabs.
\end{remark}

Theorems~\ref{thm:comparison} and \ref{thm:disc_decay_truncated} have the following direct consequence.

\begin{corollary}[Decay of disconnection]
\label{cor:decay_disconnection}
For every $p>p_c$, there exists $c=c(p)>0$ such that for $R$ large enough,
\begin{equation}\label{eq:decay_disconection}
    \P[\lr{}{\pazocal{V}(p)}{B_R}{\infty}]\ge 1-e^ {-cR^ {d-1}}.
\end{equation}
\end{corollary}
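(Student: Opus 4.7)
The plan is to obtain the corollary as a direct consequence of combining the disconnection decay for the truncated model (Theorem~\ref{thm:disc_decay_truncated}) with two outputs furnished by Theorem~\ref{thm:comparison} and the construction of $\pazocal{V}_N$: the almost sure inclusion $\pazocal{V}_N(p)\subset \pazocal{V}(p)$, noted in the paragraph preceding the corollary, and the convergence $\lim_{N\to\infty} p_c(N)=p_c$ asserted at the end of Theorem~\ref{thm:comparison}.

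More concretely, given $p>p_c$, the first step is to invoke the convergence $p_c(N)\to p_c$ in order to pick an integer $N=N(p)<\infty$ large enough that $p_c(N)<p$. Applying Theorem~\ref{thm:disc_decay_truncated} at this $N$ and this $p$ will then supply a constant $c=c(p,N)>0$ with
\begin{equation*}
\P[\lr{}{\pazocal{V}_N(p)}{B_R}{\infty}]\ge 1-e^{-cR^{d-1}}
\end{equation*}
for all sufficiently large $R$. The event $\{\lr{}{}{B_R}{\infty}\}$ is increasing in the configuration of open cells, so the almost sure inclusion $\pazocal{V}_N(p)\subset \pazocal{V}(p)$ immediately gives the monotonic bound $\P[\lr{}{\pazocal{V}(p)}{B_R}{\infty}]\ge \P[\lr{}{\pazocal{V}_N(p)}{B_R}{\infty}]$, from which \eqref{eq:decay_disconection} follows with the same constant $c$.

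Since both inputs are already at our disposal, there is no substantive obstacle in this deduction; all the nontrivial work has been absorbed into the proofs of Theorems~\ref{thm:comparison} and \ref{thm:disc_decay_truncated}. The only point worth keeping track of is that the resulting constant $c=c(p)$ depends on $p$ (through the choice of $N=N(p)$) but not on $R$, which matches the statement of the corollary.
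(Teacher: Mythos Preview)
Your proof is correct and matches the paper's approach exactly. The paper does not spell out a proof of the corollary but declares it a ``direct consequence'' of Theorems~\ref{thm:comparison} and~\ref{thm:disc_decay_truncated}, having already noted the inclusion $\pazocal{V}_N(p)\subset\pazocal{V}(p)$ and the convergence $p_c(N)\to p_c$; your write-up simply makes this deduction explicit.
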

\begin{remark}\label{rem:decay_extensions}
    As mentioned in Remark~\ref{rem:sharpness_truncated}, the proof of Theorem~\ref{thm:disc_decay_truncated} in Section~\ref{sec:Grim_Mars} also implies that $\pazocal{V}_N(p)$ percolates on slabs for all $p>p_c(N)$, $N\geq1$. Therefore, the second part of Theorem~\ref{thm:sharpness} follows directly from this fact together with the observation made in Remark~\ref{rem:comparison_extensions}. The events in \eqref{eq:exp_decay_diam}, \eqref{eq:exp_decay_vol} and \eqref{eq:unique} on the other hand are non-monotonic (combination of) connection events, and therefore bounds on their probabilities cannot be transferred directly from $\pazocal{V}_N$ to $\pazocal{V}$ by using Theorem~\ref{thm:comparison}. In Section~\ref{sec:loc_uniq}, we will deduce \eqref{eq:exp_decay_diam}, \eqref{eq:exp_decay_vol} and \eqref{eq:unique} from the decay of disconnection in \eqref{eq:decay_disconection}.
\end{remark}

\subsection{Interpolation scheme}
We now turn to the proof of Theorem~\ref{thm:comparison}, which is inspired by \cite{Sev21}.
Fix any $\delta\in(0,1)$ and set
\begin{equation}\label{eq:choice_epsilon}
\varepsilon_{\sN}\coloneqq e^{-N^{1-\delta}}.
\end{equation}
Theorem~\ref{thm:comparison} is a direct consequence of the following proposition.
\begin{proposition}\label{prop:comp_N_2N}
For every $\varepsilon>0$ there exists $N_1=N_1(\varepsilon,\delta)\geq1$ such that the following holds. For every $N\geq N_1$, every $p\in[\varepsilon,1-\varepsilon]$ and every connection event $A=\{\lr{}{}{B_r}{\partial B_R}\}$, with $1\leq r<R<\infty$, one has
\begin{equation}\label{eq:comp_N_2N}
    \P[\pazocal{V}_{2N}(p)\in A]\leq \P[\pazocal{V}_{N}(p+\varepsilon_{\sN})\in A].
\end{equation}
\end{proposition}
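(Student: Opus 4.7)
The plan is to implement an interpolation scheme in the spirit of \cite{Sev21}, now tailored to Voronoi percolation. I introduce a continuous family $(\pazocal{V}_{N,t}(p))_{t\in[0,1]}$ of truncated models with $\pazocal{V}_{N,0} = \pazocal{V}_{2N}$ and $\pazocal{V}_{N,1} = \pazocal{V}_N$, obtained by attaching an independent uniform $U_Q\in[0,1]$ to each $2N$-box $Q$ and switching the truncation and box-modification rules from the $2N$-scale to the $N$-scale inside $Q$ as soon as $U_Q\leq t$. Setting $H(t,p) \coloneqq \P[\pazocal{V}_{N,t}(p)\in A]$ and $G(t)\coloneqq H(t, p+t\varepsilon_{\sN})$, one has $G(0)=\P[\pazocal{V}_{2N}(p)\in A]$ and $G(1)=\P[\pazocal{V}_{N}(p+\varepsilon_{\sN})\in A]$, so \eqref{eq:comp_N_2N} will follow as soon as we prove the pointwise differential inequality
\[
-\frac{\partial H}{\partial t}(t,p')\;\leq\;\varepsilon_{\sN}\,\frac{\partial H}{\partial p}(t,p')
\]
for all $t\in[0,1]$ and $p'\in[\varepsilon/2,1-\varepsilon/2]$, which precisely amounts to $G'\geq 0$.

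Both sides can be unfolded, via a Russo-type formula for Poisson point processes, into sums of pivotality probabilities. On the right one obtains a Lebesgue integral over $x\in\R^d$ of the probability that flipping the color of a point of $\eta$ located at $x$ is pivotal for $A$ in $\pazocal{V}_{N,t}(p')$; on the left one obtains a sum over $2N$-boxes $Q$ of the probability that switching the $U_Q$-clock at time $t$ is pivotal for $A$. The crux of the proof is then a \emph{local surgery} that, from any configuration realizing $Q$-pivotality, produces, by modifying $\eta$ only inside $Q$, a configuration in which some single Poisson point is $p$-pivotal for $A$, with the ratio of probabilities bounded by $\varepsilon_{\sN}$. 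Heuristically, $Q$-pivotality forces a local disagreement $\pazocal{V}_N\neq \pazocal{V}_{2N}$ inside $Q$, which requires an atypical event -- an overcrowded $N$-sub-box or a cell of diameter $\gtrsim N$ crossing $Q$ -- whose probability is at most $e^{-cN^d}$, well below the sprinkling budget $\varepsilon_{\sN}=e^{-N^{1-\delta}}$, thanks to Lemmas~\ref{lemma:diam} and~\ref{lemma:intersection boundary}.

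The main obstacle is performing the surgery robustly given the rare but non-negligible geometric pathologies of Voronoi tessellation: after deleting and inserting points inside $Q$, one must reconnect the resulting open path to the two sides of $Q$ without disturbing the global $A$-pivotality, and do so uniformly in the outside configuration. This is precisely where the large deviation estimate on chemical distance (Proposition~\ref{prop:chemical_dist}) becomes essential, as it ensures, at a cost far below $\varepsilon_{\sN}$, that within a suitably enlarged surgery window the graph-theoretic and Euclidean geometries are comparable. Consequently the reconnection step can be carried out by prescribing only a bounded number of new open points, whose combined cost is bounded by $\varepsilon_{\sN}$ times the probability of a local $p$-pivotality. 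Compared to \cite{Sev21}, the ``price of local disagreement'' is much weaker here, so throwing away too many atypical configurations is not allowed; the refined geometric control of Section~\ref{sec:preliminaries} is what makes the budget close. Once the local pivotality comparison is in place, summing over $Q$ and comparing to the integral on the right yields the required differential inequality, and integrating over $t\in[0,1]$ closes the proof of \eqref{eq:comp_N_2N}.
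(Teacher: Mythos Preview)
Your continuous-clock interpolation differs from the paper's scheme, and the gap lies precisely in the surgery step. The paper does \emph{not} compare box-switching to $\varepsilon_{\sN}\,\partial_p H$, i.e.\ to single-cell pivotality. It runs a \emph{discrete} box-by-box interpolation: at step $n$ one switches the truncation inside the single box $\Lu_{2N}(x_n)$ and then performs a \emph{finite} sprinkling by a summable profile $\varepsilon_{\sN}\tau\big(\tfrac{\cdot-x_n}{2N}\big)$. The quantity on the right is therefore the \emph{sprinkling-pivotality} probability $q_n=\P[\pazocal{I}_{n+1}\in A,\,\pazocal{I}_{n+\frac12}\notin A]$, under which the sprinkling is allowed to open an entire path of cells at once. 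The core estimate (Step~4 in the proof of Lemma~\ref{lem:conv_bound}) is that, on the good chemical-distance event, such a path has length $\leq N^\gamma$ and is opened by the sprinkling with conditional probability at least $(\varepsilon_{\sN}\tau)^{N^\gamma}$, yielding the cost $e^{CN^{\gamma'}}\tau^{-CN^\gamma}$ with $\gamma'=d-\delta/2<d$. On the complementary bad event one does not give up but recovers coarse pivotality at a \emph{neighboring} box and iterates; this produces the recursive inequality \eqref{eq:conv_bound}, and the decaying profile $\tau$ is exactly what makes the recursion summable.

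In your scheme, $\partial_p H$ captures only single-cell flips, so your surgery must end at a configuration in which one closed cell is pivotal. But the connecting path between the two nearly-touching clusters has chemical length of order $N^\gamma$ (this is what Proposition~\ref{prop:chemical_dist} delivers), not $O(1)$; your assertion that the reconnection requires ``only a bounded number of new open points'' is incorrect. To manufacture single-cell pivotality one would have to force all but one cell along such a path to be open and control the resulting likelihood ratio --- yet the path itself depends on the random clusters $\mathcal{C}_r,\mathcal{C}_R$, so a genuine measure-comparison or exploration argument is needed, and you have not supplied one. Nor have you said what happens on the bad chemical-distance event, which in the paper feeds the recursion \eqref{eq:conv_bound} and is what forces the choice $\gamma>d-1$. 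These two points --- producing single-point pivotality from coarse pivotality at cost $e^{CN^\gamma}$, and absorbing the bad event --- are where the actual work lies, and your outline does not address them.
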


\begin{proof}[Proof of Theorem~\ref{thm:comparison}]
By definition, $\pazocal{V}_N(p)$ coincides locally with $\pazocal{V}(p)$ for sufficiently large $N$, therefore $\lim_{N\to\infty} \P[\pazocal{V}_{N}(p)\in A]=\P[\pazocal{V}(p)\in A]$. By successively applying Proposition~\ref{prop:comp_N_2N} to $2^iN$, $i\geq 0$, and noticing that $\sum_{i\geq 0} \varepsilon_\si{2^i N}\asymp e^{-N^{1-\delta}}< \varepsilon$ for $N$ sufficiently large (depending on $\varepsilon$), the theorem follows.
\end{proof}

\begin{proof}[Proof of Proposition~\ref{prop:comp_N_2N}]

We start by fixing $\varepsilon>0$, $N\geq1$ and $p\in[2\varepsilon,1-2\varepsilon]$. We will construct an interpolation between $\pazocal{V}_{2N}(p)$ and $\pazocal{V}_{N}(p+\varepsilon_{\sN})$ as follows. Let $\{x_0,x_1,x_2,\dots\}$ be an arbitrary enumeration of $2N\Z^d$. We start with $\pazocal{V}_{2N}(p)$ and at the $n$-th step of our procedure we modify the model around the box $\Lu_{2N}(x_n)=x_n+[0,2N)^d$.
Each such step is decomposed into two ``half-steps'', where we first change the model from $\pazocal{V}_{2N}$ to $\pazocal{V}_{N}$ inside $\Lu_{2N}(x_n)$, and then sprinkle by a decaying profile function around that box.
We do so in such a way that at the ``$\infty$-th step'' of our procedure, we end up with $\pazocal{V}_{N}(p+\varepsilon_{\sN})$. Note that each $2N$-box $\Lu_{2N}(x)$, $x\in 2N\Z^d$ is exactly the union of $2^d$ many $N$-boxes $\Lu_{N}(x')$, $x'\in N\Z^d$.

We now describe the precise construction. 
Since $N$ and $p$ are fixed once and for all, we may omit them from the notation when constructing the interpolation.
First, let $\tau:\Z^d\to\R$ be the function given by
\begin{equation*}
    \tau(x)\coloneqq c_d(1+\|x\|^{d+1})^{-1}, ~~~
    x\in \Z^d,
\end{equation*}
where the constant $c_d>0$ is chosen so that $\sum_{x\in\Z^d} c_d(1+\|x\|^{d+1})^{-1}=1$.
Consider the increasing sequence $(\tau_k)_{k\in\frac12 \N}$ of ``sprinkling functions'' from $\R^d$ to $\R$ recursively defined by $\tau_0\equiv 0$ and, for all integers $n,i\geq 0$,
\begin{align}
    \label{eq:tau_k_1}
    \tau_{n+\frac12}(x_i) &\coloneqq  \tau_n(x_i),\\
    \label{eq:tau_k_2}
    \tau_{n+1}(x_i) &\coloneqq  \tau_{n+\frac12}(x_i) + \varepsilon_{\sN} \tau \big(\tfrac{x_i-x_n}{2N}\big),
\end{align}
and $\tau_k(y)=\tau_k(x_i)$ for all $y\in \Lu_{2N}(x_i)$, and all $k,i\geq0$.
Notice that by construction, the limit $\tau_{\infty}\coloneqq \lim_{n\to\infty} \tau_n$ is the constant function equal to $\varepsilon_{\sN}$. 
We are now ready to construct the interpolation. For every $k\in\frac12\N$, $a\in\{o,c\}$ and $i\geq0$, we define
\begin{equation*}
    \omega^k_a\cap \Lu_{2N}(x_i)\coloneqq
    \begin{cases}
        \overline{\eta}_{a}^{N}(p+\tau_k)\cap \Lu_{2N}    (x_i),    &\text{ for } i\leq \lceil k \rceil-1,\\
        \overline{\eta}_a^{2N}(p+\tau_k)\cap \Lu_{2N}     (x_i)   &\text{ for } i\geq \lceil k \rceil.
    \end{cases}
\end{equation*}
For every $k\in\frac12\N$, set
\begin{align*}
    \pazocal{I}^1_k &\coloneqq  \Big\{y\in\bigcup_{i=0}^{\lceil k\rceil-1} \Lu_{2N}(x_i):~ d(y,\omega^k_o)\leq\min\{d(y,\omega^k_c),\, N\} \Big\},\\
    \pazocal{I}^2_k &\coloneqq \Big\{y\in\bigcup_{i=\lceil k\rceil}^{\infty} \Lu_{2N}(x_i):~ d(y,\omega^k_o)\leq\min\{d(y,\omega^k_c),\, 2N\}\Big\},
\end{align*}
and finally define
\begin{equation*}
    \pazocal{I}_k \coloneqq  \pazocal{I}^1_k\cup \pazocal{I}^2_k.
\end{equation*}
The sequence $(\pazocal{I}_k)_{k\in\frac12 \N}$ is an interpolation between $\pazocal{V}_{2N}(p)$ and $\pazocal{V}_{N}(p+\varepsilon_{\sN})$.
Indeed, by construction $\pazocal{I}_0=\pazocal{I}_0^2=\pazocal{V}_{2N}(p)$ and, since $\tau_{\infty}\equiv \varepsilon_{\sN}$, we have
\begin{equation*}
    \pazocal{I}_\infty\coloneqq \lim_{k\to\infty} \pazocal{I}_k =     \lim_{k\to\infty} \pazocal{I}^1_k =  \pazocal{V}_{N}    (p+\varepsilon_{\sN}).
\end{equation*}
In words, for every $n\in\N$, we construct $\pazocal{I}_{n+\frac12}$ from $\pazocal{I}_n$ by changing the model in $\Lu_{2N}(x_n)$ from $\pazocal{V}_{2N}$ to $\pazocal{V}_{N}$; and we construct $\pazocal{I}_{n+1}$ from $\pazocal{I}_{n+\frac12}$ by sprinkling everywhere by an integrable function $\varepsilon_{\sN}\tau$ of the (renormalized) distance to the ``center'' $x_n$ -- see \eqref{eq:tau_k_2}. Notice that $\pazocal{I}_k$ is increasing in $\eta_o(p+\tau_k)$ and decreasing in $\eta_c(p+\tau_k)$, and therefore it satisfies the FKG inequality. We will repeatedly use the fact that for every $D\subset \R^d$, the set $\pazocal{I}_{k}\cap D$ depends only on the restriction of $\eta$ to $(D+\Lc_{4N}(y))\times[0,1]$.\footnote{ If $D$ is of the form $D=\cup_{j\in J} \Lu_{2N}(x_j)$, then $\pazocal{I}_{k}\cap D$ depends only on the restriction of $\eta$ to $(D+\Lc_{2N}(y))\times[0,1]$.} 
In particular, $\pazocal{I}_k$ is independent in regions with pairwise $\ell^\infty$ distance at least $8N$.

The crucial property of this construction is that it is ``almost increasing'' in $k$. First, we obviously have $\pazocal{I}_{n+\frac12}\subset \pazocal{I}_{n+1}$ almost surely for every $n\in\N$. Second, $\pazocal{I}_{n}= \pazocal{I}_{n+\frac12}$ holds with very high probability. Indeed, notice that by construction one has $\pazocal{I}_{n}= \pazocal{I}_{n+\frac12}$ as long as $|\overline{\eta}_c(1)\cap \Lu_{N}(x')|\leq 2N^{d}$ for all $x'\in N\Z^d$ with $\Lu_N(x')\subset \Lu_{2N}(x_n)$, and $d(y,\overline{\eta}_o(p))\leq N$ for all $y\in \Lu_{2N}(x_n)$.
By standard bounds on Poisson random variables, there exists $c=c(\varepsilon)>0$ such that 
\begin{equation}\label{eq:n_n+1/2}	
    \P[\pazocal{I}_{n} = \pazocal{I}_{n+\frac12}]\geq 1-e^{-cN^d}.
\end{equation}
We claim that, if $N$ is large enough, then for every connection event $A$, we have
\begin{equation}\label{eq:n_n+1}
    \P[\pazocal{I}_n\in A] \leq \P[\pazocal{I}_{n+1}\in A] ~~~ \text{for all } n\geq0.
\end{equation}
We now proceed with the proof of \eqref{eq:n_n+1}, which readily implies the desired inequality \eqref{eq:comp_N_2N}. In what follows, $N\geq1$ and $A=\{\lr{}{}{B_r}{\partial B_R}\}$ are fixed and thus often omitted from the notation, but every estimate will be uniform on them. Let
\begin{align}
    \label{eq:def_p_n}
    &p_n\coloneqq \P[\pazocal{I}_n\in A]-\P[\pazocal{I}_{n+\frac12}\in A]\\
    \label{eq:def_q_n}
    &q_n\coloneqq \P[\pazocal{I}_{n+1}\in A]-\P[\pazocal{I}_{n+\frac12}\in A] = \P[\pazocal{I}_{n+1}\in A,~\pazocal{I}_{n+\frac12}\notin A] ~~~(\geq0).
\end{align}
Our goal is to prove that, if $N$ is large enough (not depending on $A$), then $p_n\leq q_n$ for every $n\geq0$. 
Given $n\geq0$, $y\in\R^d$ and $L\geq0$, we define the \emph{coarse pivotality} event
\begin{equation*}
    \text{Piv}^{n}_{y}(L)\coloneqq \{\pazocal{I}_{n+\frac12}\cup \Lc_{L}(y) \in A\}\cap \{\pazocal{I}_{n+\frac12}\setminus \Lc_{L}(y) \notin A\}.
\end{equation*}
Notice that $\text{Piv}^{n}_{y}(L)$ depends only on $\pazocal{I}_{n+\frac12}$ restricted to the complement of $\Lc_L(y)$. Since $\pazocal{I}_{n}\cap \Lc_{4N}(x_n)^\mathrm{c}= \pazocal{I}_{n+\frac12}\cap \Lc_{4N}(x_n)^\mathrm{c}$ almost surely, we have the inclusions
\begin{align*} 
    \{\pazocal{I}_n\in A,~ \pazocal{I}_{n+\frac12}\notin A\} &\subset \{\pazocal{I}_{n}\neq\pazocal{I}_{n+\frac12}\} \cap \text{Piv}^{n}_{x_n}(4N) \\ 
    &\subset \{\pazocal{I}_{n}\neq                         \pazocal{I}_{n+\frac12}\} \cap \text{Piv}^{n}_{x_n}(16N).
\end{align*}
The event $\{\pazocal{I}_{n}\neq \pazocal{I}_{n+\frac12}\}$, besides satisfying the bound \eqref{eq:n_n+1/2}, only depends on $\eta$ restricted to $\Lc_{4N}(x_n)\times[0,1]$, which in turn is independent of $\pazocal{I}_{n+\frac12}$ restricted to $\Lc_{16N}(x_n)^{\mathsf{c}}$. Combining these observations, we obtain
\begin{align}\label{eq:decoupling}
    \begin{split}
        p_n&\leq\P[\pazocal{I}_n\in A,~   
        \pazocal{I}_{n+\frac12}\notin A] \\
        &\leq \P[\pazocal{I}_{n}\neq                     \pazocal{I}_{n+\frac12},~ \text{Piv}^{n}_{x_n}(16N)] \leq e^{-cN^d} \P[\text{Piv}^{n}_{x_n}(16N)].
    \end{split}
\end{align}
For $n,j\geq0$, let 
\begin{equation*}
    p_n(x_j)\coloneqq \P[\text{Piv}^{n}_{x_j}(16N)].
\end{equation*}
In view of \eqref{eq:decoupling}, it remains to show that if $N$ is large enough, then $p_n(x_n)\leq e^{cN^d}q_n$ for every event $A$ and all $n\geq0$. In other words, we want to construct the \emph{``sprinkling pivotality''} event $\{\pazocal{I}_{n+1}\in A,~\pazocal{I}_{n+\frac12}\notin A\}$ -- which has probability $q_n$, see \eqref{eq:def_q_n} -- out of the coarse pivotality event $\text{Piv}^n_{x_n}(16N)$ -- which has probability $p_n(x_n)$ -- by paying a price which is less than exponential in the volume. This fact is a straightforward consequence of the following lemma. Roughly speaking, it says that if coarse pivotality happens at a given site $x_j$, then one can either perform a \emph{``local surgery''} -- whose cost depends on the sprinkling function $\tau$ -- to construct sprinkling pivotality, or a local bad event -- which has a very small probability -- happens around $x_j$ and one further recovers a coarse pivotality event at some site $x_{j'}$ near $x_j$.

\begin{lemma}[Local surgery]\label{lem:conv_bound}
    There exist constants $N_1\geq1$ and $c,C\in(0,\infty)$ such that for $N\geq N_1$, every connection event $A$ and every $n,j\geq0$, 
    \begin{equation}\label{eq:conv_bound}
        p_n(x_j)\leq e^{CN^{\gamma'}}\, \tau\big(\tfrac{x_j-x_n}{2N}\big)^{-C N^\gamma} \,q_n ~+~ e^{-cN^{\gamma}} \sum_{x_{j'}\in \Lc_{20N}(x_j)} p_n(x_{j'}),
    \end{equation}
    where $\gamma=d-1+\delta/2$ and $\gamma'=d- \delta/2$.
\end{lemma}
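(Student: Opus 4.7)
The plan is to decompose $\text{Piv}^n_{x_j}(16N)$ using a local geometric \emph{good event} $\pazocal{G}_j$ around $x_j$: on $\pazocal{G}_j$ a direct surgery inside $\Lc_{16N}(x_j)$ produces the sprinkling-pivotality event (yielding the first term of \eqref{eq:conv_bound}), while on $\pazocal{G}_j^{\mathrm c}$ the small probability of the bad event is traded against pivotality probabilities at nearby lattice points (second term).

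Concretely, I would take $\pazocal{G}_j$ measurable with respect to $\eta$ restricted to an enlarged box $\Lc_{CN}(x_j)\times[0,1]$, and defined as the intersection of: (a) $|\overline{\eta}_c(1)\cap \Lu_N(x')|\leq 2N^d$ for every sub-box $\Lu_N(x')\subset \Lc_{CN}(x_j)$, so the truncation in the definition of $\pazocal{V}_N$ does not activate inside (Lemma~\ref{lemma:intersection boundary}); (b) every Voronoi cell intersecting $\Lc_{CN}(x_j)$ has diameter at most $N^{\gamma/d}$ (Lemma~\ref{lemma:diam}); (c) the chemical-distance event $\pazocal{D}(CN, N^{\gamma})$ of Proposition~\ref{prop:chemical_dist} occurs. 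Together these yield $\P[\pazocal{G}_j^{\mathrm c}]\leq e^{-cN^{\gamma}}$. Using the deterministic inclusion $\text{Piv}^n_{x_j}(16N)\subset\bigcup_{x_{j'}\in \Lc_{20N}(x_j)\cap 2N\Z^d}\text{Piv}^n_{x_{j'}}(16N)$ (obtained by sliding the coarse-pivotal box) together with a standard decoupling between the local event $\pazocal{G}_j^{\mathrm c}$ and the pivotalities $\text{Piv}^n_{x_{j'}}(16N)$ (which only depend on $\eta$ outside $\Lc_{12N}(x_{j'})$), one obtains the bad-event bound $\P[\text{Piv}^n_{x_j}(16N)\cap \pazocal{G}_j^{\mathrm c}]\leq e^{-cN^{\gamma}}\sum_{x_{j'}\in \Lc_{20N}(x_j)}p_n(x_{j'})$.

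The heart of the argument is the surgery on $\pazocal{G}_j$. Fix a configuration of $\pazocal{I}_{n+\frac12}$ outside $\Lc_{16N}(x_j)$ witnessing coarse pivotality; I then construct a modification of $\eta$ inside $\Lc_{16N}(x_j)$ realizing the event $\{\pazocal{I}_{n+1}\in A,\,\pazocal{I}_{n+\frac12}\notin A\}$. On $\pazocal{G}_j$, Proposition~\ref{prop:chemical_dist} provides a (canonically chosen) path $\pi$ in the Voronoi graph, of combinatorial length at most $CN^{\gamma}$, crossing $\Lc_{16N}(x_j)$ between the two pivotal sides. The surgery forces the cells along $\pi$ to be \emph{sprinkled-open}: the corresponding Poisson points $(x,t)\in\eta$ are required to lie in the sprinkling interval $(p+\tau_{n+\frac12}(x),\, p+\tau_{n+1}(x)]$, whose length near $x_j$ equals $\varepsilon_N \tau((x_j-x_n)/(2N))$. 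This makes those cells closed for $\pazocal{I}_{n+\frac12}$ (preserving the non-connection) and open for $\pazocal{I}_{n+1}$ (creating the connection). A Radon--Nikodym comparison then gives surgery cost $(\varepsilon_N\tau)^{CN^{\gamma}}=e^{-CN^{\gamma'}}\tau^{CN^{\gamma}}$, using $\varepsilon_N=e^{-N^{1-\delta}}$ and the identity $\gamma+1-\delta=\gamma'$; inverting produces $\P[\text{Piv}^n_{x_j}(16N)\cap \pazocal{G}_j]\leq e^{CN^{\gamma'}}\tau^{-CN^{\gamma}}q_n$.

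The main obstacle is making the geometric surgery rigorously canonical and compatible with the conditioning: one must choose the path $\pi$ deterministically from the outside configuration and $\pazocal{G}_j$, so that sprinkling its cells open both (i) creates a Voronoi connection through $\Lc_{16N}(x_j)$ --- which relies on the cell-diameter bound in $\pazocal{G}_j$ to rule out local degeneracies, together with Proposition~\ref{prop:chemical_dist} guaranteeing that a chemical-length budget of $O(N^{\gamma})$ suffices to cross the box --- and (ii) preserves the non-connection of $\pazocal{I}_{n+\frac12}$, which is precisely what forces the use of the sprinkling interval $(p+\tau_{n+\frac12},\,p+\tau_{n+1}]$ rather than the larger interval $[0,p+\tau_{n+1}]$. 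The choice $\gamma=d-1+\delta/2$ is a ``surface-area plus cushion'' exponent: the $d-1$ reflects the natural size of a crossing plug through a box of radius $N$, while the $\delta/2$ cushion absorbs the chemical-to-Euclidean inflation and the cell-diameter tolerance in $\pazocal{G}_j$, with the resulting bad-event budget $e^{-cN^{\gamma}}$ matching all three ingredients of $\pazocal{G}_j$.
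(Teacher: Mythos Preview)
Your proposal has a genuine gap in the surgery step. You write that forcing the cells along a crossing path $\pi$ to carry labels in the sprinkling interval $(p+\tau_{n+\frac12},\,p+\tau_{n+1}]$ ``makes those cells closed for $\pazocal{I}_{n+\frac12}$ (preserving the non-connection)''. But coarse pivotality $\text{Piv}^n_{x_j}(16N)$ only tells you that $\pazocal{I}_{n+\frac12}\setminus \Lc_{16N}(x_j)\notin A$, not that $\pazocal{I}_{n+\frac12}\notin A$; there may well be a connection running \emph{through} the box. Closing the cells along a one-dimensional path $\pi$ cannot cure this: in $d\geq 2$ a path does not separate the box, so the remaining open cells inside $\Lc_{16N}(x_j)$ can still carry a connection for $\pazocal{I}_{n+\frac12}$, and you have no control over them. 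Your Radon--Nikodym accounting only charges for the $O(N^\gamma)$ path cells; it says nothing about forcing $\pazocal{I}_{n+\frac12}\notin A$ on the rest of the box.

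This is precisely why the paper does not go directly from coarse pivotality to sprinkling pivotality. It first converts to \emph{closed} pivotality $\text{CPiv}^n_{x_j}(20N)=\{\pazocal{I}_{n+\frac12}\cup\Lc_{20N}(x_j)\in A\}\cap\{\pazocal{I}_{n+\frac12}\notin A\}$ by using FKG to close the surface $\partial\Lc_{16N}(x_j)$ (and $\partial B_r$ inside it) at a cost of $e^{CN^{d-1}}$; this is the ``surface-area'' exponent you correctly sensed but never actually used. With $\pazocal{I}_{n+\frac12}\notin A$ secured \emph{a priori}, one can condition on all of $(\overline{\eta}_o(p+\tau_{n+\frac12}),\overline{\eta}_c(p+\tau_{n+\frac12}))$ --- which fixes $\pazocal{I}_{n+\frac12}$, the disjoint clusters $\mathcal{C}_r,\mathcal{C}_R$, and the short Voronoi path between them on the good event --- and then use only the \emph{remaining} sprinkling randomness to open that path in $\pazocal{I}_{n+1}$. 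The paper's intermediate Step~2 (shrinking from $20N$-closed-pivotal to $6N$-closed-pivotal at some nearby $x_{j'}$) is what makes the good event $\pazocal{G}$, supported in $\Lc_{13N}(x_{j'})$, independent of the pivotality $\text{Piv}^n_{x_{j'}}(16N)$; in your setup with everything at radius $16N$ around $x_j$ this independence would also need checking. The missing piece in your outline is thus not a technicality but the entire closed-pivotality conversion (the paper's Steps~1--2), without which the sprinkling surgery cannot produce the event $\{\pazocal{I}_{n+\frac12}\notin A\}$.
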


Before proving Lemma~\ref{lem:conv_bound}, let us conclude the proof that $p_n\leq q_n$ for every $n\geq0$. Starting with $j=n$ and then successively applying Lemma~\ref{lem:conv_bound}, one can easily prove by induction on $T\geq1$ that the following holds
\begin{align*}
    \begin{split}
        p_n(x_n)
        & \leq e^{CN^{\gamma'}} \Big(\sum_{t=0}^{T-1} e^{-cN^{\gamma}t}\, |2N\Z^d\cap \Lc_{10N}|^t\, |2N\Z^d\cap \Lc_{20Nt}|\, \big(c_d^{-1}+c_d^{-1}(20t)^{d+1}\big)^{CN^\gamma}\Big)\,q_n \\
        & +e^{-cN^{\gamma}T} |2N\Z^d\cap \Lc_{20N}|^T \sum_{x_j\in \Lc_{20NT}(x_n)} p_n(x_j).
    \end{split}
\end{align*}  
Noting that the sum inside the parenthesis above is smaller than $e^{C'N^\gamma}$ and that the second term vanishes when $T\to\infty$ (recall that $p_n(x_i)\leq1$), we obtain
\begin{equation*}
    p_n(x_n)\leq e^{C'N^{\gamma'}}\,q_n,
\end{equation*}
which combined with \eqref{eq:decoupling} and the fact that $\gamma'<d$, implies that for $N$ large enough we have $p_n(x_n)\leq q_n$ for every connection event $A$ and every $n\geq0$, as we wanted to prove.
\end{proof}

It remains to prove Lemma~\ref{lem:conv_bound}, which is the technical heart of our proof.

\begin{proof}[Proof of Lemma~\ref{lem:conv_bound}]	
As described above, we want to construct the sprinkling pivotality event $\{\pazocal{I}_{n+1}\in A,~\pazocal{I}_{n+\frac12}\notin A\}$ starting from the coarse pivotality event $\text{Piv}^n_{x_n}(16N)$. We do so in four steps. In the first step, we ``complete the dual surface'' induced by coarse pivotality. In the second step, we decrease the distance between the two almost touching clusters. In the third step, we reduce to the situation where a certain ``good event'' happens around the touching region. In the fourth and final step, we use this good event to obtain the desired sprinkling pivotality.

Fix $N\geq1$ and $A\coloneqq \{\lr{}{}{B_r}{\partial B_R}\}$ a connection event. We stress that every estimate below will be uniform on $N$ and $A$. For $K\subset \R^d$, we will denote by $\eta_{|K}$ the restriction of $\eta$ to $K\times[0,1]$. 
Given $\fp:\R^d\to[0,1]$, we say that an event is $\fp$-increasing (resp.~decreasing) in $\eta_{|K}$ if it is increasing (resp.~decreasing) in $\eta_c(\fp)$ restricted to $K$ and decreasing (resp.~increasing) in $\eta_c(\fp)$ restricted to $K$. Given $n\geq0$, $L \geq 0$ and $y\in\R^d$, we define the \emph{closed pivotality} event
\begin{equation*}
    \text{CPiv}^n_y(L)\coloneqq \{\pazocal{I}_{n+\frac12}\cup \Lc_{L}(y) \in A\}\cap \{\pazocal{I}_{n+\frac12} \notin A\}.
\end{equation*}
In words, $\Lc_L(y)$ is called closed pivotal if it is pivotal but $A$ does not happen. The proof will be divided into four steps. \\
\vspace{0cm}

\textbf{Step 1:} \textit{From $16N$-pivotal to $20N$-closed-pivotal.}\\

\noindent
Let $n,j\geq0$. Consider the following event
\begin{equation*}
    \text{Piv}^n_{x_j}(20N,16N)\coloneqq \{\pazocal{I}_{n+\frac12}\cup \Lc_{20N}(x_j) \in A\}\cap \{\pazocal{I}_{n+\frac12}\setminus \Lc_{16N}(x_j) \notin A\}.
\end{equation*}
By definition $\text{Piv}^n_{x_j}(16N)\subset \text{Piv}^n_{x_j}(20N,16N)$, hence
\begin{equation}\label{eq:inclusion_piv}
    p_n(x_j)=\P[\text{Piv}^n_{x_j}(16N)]\leq \P[\text{Piv}^n_{x_j}(20N,16N)]. 
\end{equation}
Now, consider the event 
\begin{equation*}
    F\coloneqq \{\pazocal{I}_{n+\frac12}\cap \partial \Lc_{16N}(x_j)=\emptyset\} \cap \{\pazocal{I}_{n+\frac12} \cap \Lc_{16N}(x_j) \cap \partial B_r =\emptyset\}.
\end{equation*}
Now notice that (see Figure~\ref{surgery}) 
\begin{equation*}
    \text{Piv}^n_{x_j}(20N,16N)\cap F \subset  \text{CPiv}^n_{x_j}(20N).
\end{equation*} 
Furthermore, both events $\text{Piv}^n_{x_j}(20N,16N)$ and $F$ are $(p+\tau_{n+\frac12})$-decreasing in $\eta_{|\Lc_{18N}(x_j)}$ and $F$ is $\eta_{|\Lc_{18N}(x_j)}$-measurable. Therefore, conditioning on $\eta_{|\Lc_{18N}(x_j)^{\mathsf{c}}}$ and applying the FKG inequality for $\eta_{|\Lc_{18N}(x_j)}$ gives 
\begin{equation}\label{eq:FKG_piv_F}
    \P[\text{CPiv}^n_{x_j}(20N)]\geq \P[\text{Piv}^n_{x_j}(20N,16N)\cap F] \geq \P[\text{Piv}^n_{x_j}(20N,16N)]\P[F].
\end{equation}
We claim that
\begin{equation}\label{eq:bound_F}
    \P[F]\geq e^{-CN^{d-1}}.
\end{equation}
To prove \eqref{eq:bound_F}, first notice that $\P[B_1(x)\cap \pazocal{I}_{n+\frac12}=\emptyset]\geq c=c(\varepsilon)>0$ for all $x\in\R^d$. Then \eqref{eq:bound_F} follows by covering $\partial \Lc_{16N}$ and $\Lc_{16N}(x_j) \cap \partial B_r$ with $O(N^{d-1})$ many balls of radius $1$ and using the FKG inequality.
The inequalities \eqref{eq:inclusion_piv}, \eqref{eq:FKG_piv_F} and \eqref{eq:bound_F} together give
\begin{equation}\label{eq:4Npiv_to_8Ncpiv}
    p_n(x_j)\leq e^{CN^{d-1}}\P[\text{CPiv}^n_{x_j}(20N)].
\end{equation}
\vspace{0cm}

\textbf{Step 2:} \textit{From $20N$-closed-pivotal to $6N$-closed-pivotal.}\\

\noindent
First, let $\mathcal{C}_{r}$ and $\mathcal{C}_{R}$ denote the (union of) connected components of $\pazocal{I}_{n+\frac12}$ intersecting $B_r$ and $\partial B_R$, respectively. For $y\in\Z^d$, consider the events
\begin{align*}
    &E_1\coloneqq  \{\mathcal{C}_{r}\cap \partial B_R=\emptyset\}\cap\{\mathcal{C}_{r}\cap \Lc_{20N}(x_j)\neq\emptyset\}\\
    &E_2(y)\coloneqq \{\mathcal{C}_{R}\cap \Lc_{1}(y)\neq\emptyset\},
\end{align*}
and notice that, for $E(y)\coloneqq E_1\cap E_2(y)$, we have
$$\text{CPiv}^n_{x_j}(20N)\subset\bigcup_{y\in \Lc_{20N}(x_j)\cap \Z^d} E(y),$$
By a union bound, we can find $y_0\in \Lc_{20N}(x_j)\cap \Z^d$ such that
\begin{equation}\label{eq:step2_1}
    \P[E(y_0)]\geq  cN^{-d}\, \P[\text{CPiv}^n_{x_j}(20N)].
\end{equation}

Consider the event 
$$F(y_0,\mathcal{C}_{r})\coloneqq \{\lr{ \Lc_{20N}(x_j)}{\pazocal{I}_{n+\frac12}}{\Lc_{1}(y_0)}{\mathcal{C}_r+\Lc_{8N}}\}\cap\{\Lc_{1}(y_0)\cap (\mathcal{C}_r+\Lc_{8N})^{\mathsf{c}}\subset\pazocal{I}_{n+\frac12}\}$$
and observe that on the event $E(y_0)\cap F(y_0,\mathcal{C}_{r})$ the clusters $\mathcal{C}_r$ and $\mathcal{C}_R$ are disjoint but within $\ell^\infty$ distance at most $8N$ from each other in $\Lc_{20N}(x_j)$ -- see Figure~\ref{surgery}. In particular, one can find $x_{j'}\in (2N\Z^d)\cap \Lc_{20N}(x_j)$ which is at $\ell^\infty$ distance smaller than $6N$ from each of $\mathcal{C}_r$ and $\mathcal{C}_R$.
All in all, we obtain
\begin{equation}\label{eq:inclusion_EcapF}
    E(y_0)\cap F(y_0,\mathcal{C}_{r}) ~\subset \bigcup_{x_{j'}\in \Lc_{20N}(x_j)}~ \text{CPiv}^n_{x_{j'}}(6N).
\end{equation}
\begin{figure}[!h]
    \centering
    \begin{subfigure}[b]{0.4\textwidth}
        \centering         \includegraphics[width=0.8\textwidth]{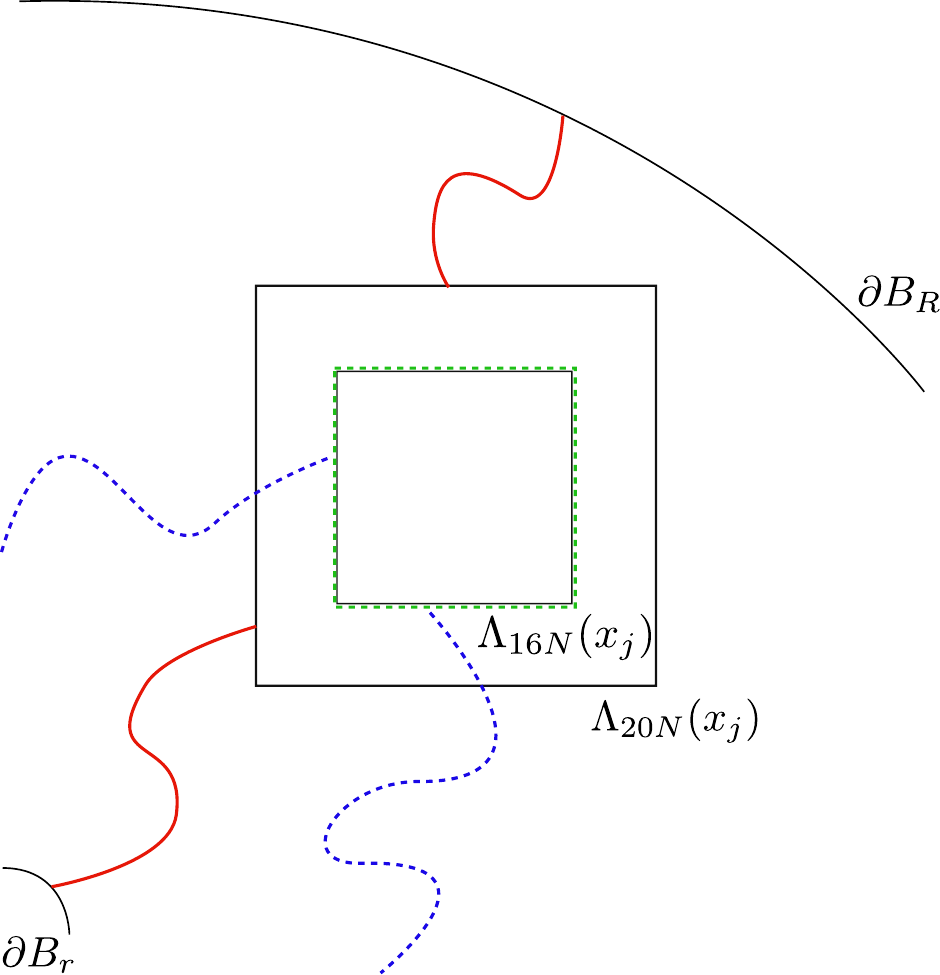}
    \end{subfigure}
    \hspace{1.5cm}
    \begin{subfigure}[b]{0.4\textwidth}
        \centering
         \includegraphics[width=0.8\textwidth]{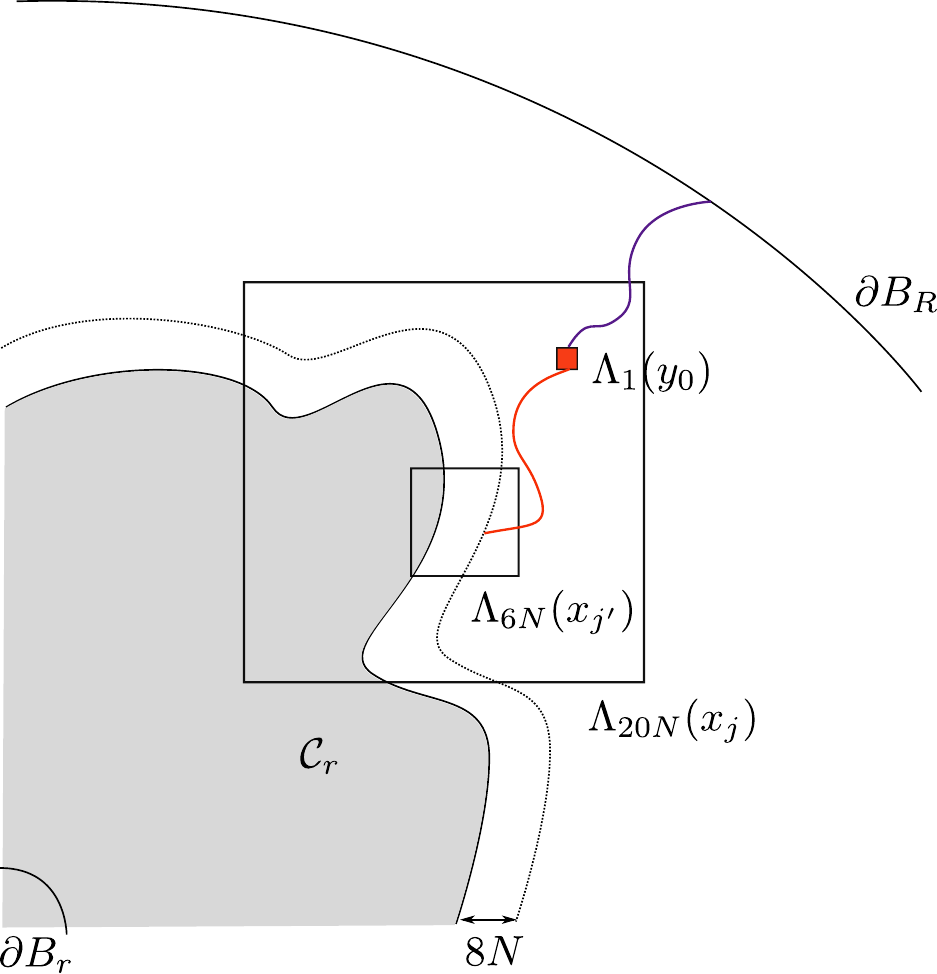}
    \end{subfigure}
    \caption{\label{surgery} On the left, an illustration of step 1. The red open paths together with the blue closed surface represent the event $\text{Piv}^n_{x_j}(20N,16N)$, while the green closed surface represents the event $F$. On the right, an illustration of step 2. The events $E(y_0)$ and $F(y_0,\mathcal C_r)$ are represented in purple and red, respectively.}
\end{figure}

For any given $\pazocal{C}\subset \R^d$, consider the event
$$F(y_0,\pazocal{C})\coloneqq \{\lr{ \Lc_{20N}(x_j)}{\pazocal{I}_{n+\frac12}}{\Lc_{1}(y_0)}{\pazocal{C}+\Lc_{8N}}\}\cap\{\Lc_{1}(y_0)\cap (\pazocal{C}+\Lc_{8N})^{\mathsf{c}}\subset\pazocal{I}_{n+\frac12}\}.$$
Similarly to \eqref{eq:bound_F}, one can prove that for every $\pazocal{C}$ such that $\pazocal{C}\cap \Lc_{20N}(x_j)\neq\emptyset$, we have
\begin{equation}\label{eq:bound_F(y_0,C)}
    \P[F(y_0,\pazocal{C})]\geq e^{-CN}.
\end{equation}
Indeed, first notice that $\P[B_1(x)\subset \pazocal{I}_{n+\frac12}]\geq c=c(\varepsilon)>0$ for all $x\in\R^d$. Then \eqref{eq:bound_F(y_0,C)} follows by covering $\Lc_{1}(y_0)$ and a straight line in $\Lc_{20N}(x_j)$ from $\Lc_{1}(y_0)$ to $\pazocal{C}+\Lc_{8N}$ with $O(N)$ many balls of radius $1$ and using the FKG inequality.
Since both events $F(y_0,\pazocal{C})$ and $E_2(y_0)$ are $(p+\tau_{n+\frac12})$-increasing in $\eta_{|(\pazocal{C}+\Lc_{4N})^{\mathsf{c}}}$, we can apply the FKG inequality to deduce that 
\begin{align}\label{eq:step2_2}
    \begin{split}
        \P[F(y_0,\pazocal{C})\cap E_2(y_0) \,\big|\, 
        \eta_{|\pazocal{C}+\Lc_{4N}}]&\geq \P[F(y_0,\pazocal{C}) \,\big|\, \eta_{|\pazocal{C}+\Lc_{4N}}] \,\, \P[E_2(y_0) \,\big|\, \eta_{|\pazocal{C}+\Lc_{4N}}]\\
        &=\P[F(y_0,\pazocal{C})] \,\, \P[E_2(y_0) \,\big|\, \eta_{|\pazocal{C}+\Lc_{4N}}]\\
        &\geq e^{-CN} \,\P[E_2(y_0) \,\big|\, \eta_{|\pazocal{C}+\Lc_{4N}}],
    \end{split}
\end{align}
for every $\pazocal{C}$ such that $\pazocal{C}\cap \Lc_{20N}(x_j)\neq\emptyset$. In the second line of \eqref{eq:step2_2} we used that $F(y_0,\pazocal{C})$ is independent of $\eta_{|\pazocal{C}+\Lc_{4N}}$ and in the third line we used \eqref{eq:bound_F(y_0,C)}. Notice that for every $\pazocal{C}$, the event $\{\mathcal{C}_{r}=\pazocal{C}\}$ is measurable with respect to $\eta_{|\pazocal{C}+\Lc_{4N}}$. 
Therefore, taking expectation of \eqref{eq:step2_2} over sets $\pazocal{C}$ such that $\pazocal{C}\cap \partial B_R=\emptyset$ and $\pazocal{C}\cap \Lc_{20N}(x_{j_0})\neq \emptyset$ under the distribution of $\mathcal{C}_{r}$, gives
\begin{equation}\label{eq:step2_3}
    \P[E(y_0)\cap F(y_0,\mathcal{C}_{r})]\geq e^{-CN}\P[E(y_0)].
\end{equation}
Combining \eqref{eq:step2_1}, \eqref{eq:inclusion_EcapF} and \eqref{eq:step2_3}, we obtain
\begin{equation}\label{eq:8Ncpiv_to_2Ncpiv}
    \P[\text{CPiv}^n_{x_j}(20N)]\leq e^{CN}\sum_{x_{j'}\in \Lc_{20N}(x_j)} \P[\text{CPiv}^n_{x_{j'}}(6N)].
\end{equation}
\vspace{0cm}

\textbf{Step 3:} \textit{From $6N$-closed-pivotal to good-$6N$-closed-pivotal or bad-$16N$-pivotal.}\\

\noindent
Fix any $x_{j'}\in \Lc_{20N}(x_j)$. Recall that $\overline{\eta}\coloneqq \overline{\eta}_o(1)$ is simply a Poisson point process of intensity $1$ on $\R^d$. Consider the following events 
\begin{align}
    \pazocal{E}&\coloneqq \left\{ |\overline{\eta}\cap \Lu_N(y)|\leq 2N^d ~~~ \forall y\in N\Z^d \text{ with } \Lu_N(y)\subset \Lc_{12N}(x_{j'})\right\}, \\
    \pazocal{F}&\coloneqq \left\{ \overline{\eta}\cap B_N(y)\neq \emptyset ~~~\forall y\in \Lc_{12N}(x_{j'}) \right\},
\end{align}
and finally
\begin{equation*}
    \pazocal{G} \coloneqq (x_{j'} + \pazocal{D}(6N,N^\gamma) ) \cap \pazocal{E} \cap \pazocal{F}
\end{equation*}
where we recall that $\pazocal D(\cdot,\cdot)$ was defined in the statement of Proposition~\ref{prop:chemical_dist}.
On the one hand, a straightforward computation with Poisson random variables gives $\P[\pazocal{E} \cap \pazocal{F}]\geq 1-e^{-cN^d}$. On the other hand, Proposition~\ref{prop:chemical_dist} implies that $\P[\pazocal{D}(6N,N^\gamma)]\geq 1-e^{-cN^\gamma}$ (recall that $\gamma=d-\delta\in(d-1,d)$), and therefore
\begin{equation*}
    \P[\pazocal{G}]\geq 1-e^{-cN^\gamma}.
\end{equation*}
Notice that $\text{Piv}^n_{x_{j'}}(16N)$ is $\eta_{|\Lc_{14N}(x_{j'})^{\mathrm{c}}}$-measurable and $\pazocal{G}$ is $\eta_{|\Lambda_{13N}(x_{j'})}$-measurable, so these events are independent. As a conclusion, we have
\begin{align*}
    \begin{split}
        \P[\text{CPiv}^n_{x_{j'}}(2\overline{N})\cap \pazocal G^{\mathsf{c}}]&\leq \P[\text{Piv}^n_{x_{j'}}(6\overline{N})\cap \pazocal{G}^{\mathsf{c}}]\\
        &\leq e^{-cN^{\gamma}}\P[\text{Piv}^n_{x_{j'}}(6\overline{N})]=e^{-cN^\gamma}p_n(x_{j'}),
    \end{split}
\end{align*}
and hence
\begin{equation}\label{eq:piv_to_goodclosepiv}
    \P[\text{CPiv}^n_{x_{j'}}(2\overline{N})] \leq \P[\text{CPiv}^n_{x_{j'}}(2\overline{N})\cap\pazocal G] + e^{-c N^\gamma} p_n(x_{j'}).
\end{equation}
\vspace{0cm}

\textbf{Step 4:} \textit{From good-$6N$-closed-pivotal to sprinkling-pivotal.}\\ 

\noindent

First observe that on the event $\pazocal{E}$, the configuration $\omega_a^{n+\frac12}$ coincides with $\overline{\eta}_a(p+\tau_{n+\frac12})$ in $\Lc_{12N}(x_{j'})$, for $a\in\{o,c\}$, and on the event $\pazocal{F}$ every point in $\Lc_{12N}(x_{j'})$ remains at distance at most $N$ from $\overline{\eta}$. Second, notice that conditioning on $\overline{\eta}_o(p+\tau_{n+\frac12})$ and $\overline{\eta}_c(p+\tau_{n+\frac12})$ completely determines $\pazocal{I}_{n+\frac12}$ and $\overline{\eta}$, and therefore the clusters $\mathcal{C}_r$ and $\mathcal{C}_R$ along with the event $\text{CPiv}^n_{x_{j'}}(6N)\cap \pazocal{G}$. Third, on the event $\text{CPiv}^n_{x_{j'}}(6N)\cap \pazocal{G}$, there exist a path $w_1,w_2,\cdots,w_t \in G(\overline{\eta})\cap \Lc_{12N}(x_{j'})$, with $t\leq N^\gamma$, between the cells of $y\in \mathcal{C}_r\cap \Lc_{6N}(x_{j'})$ and $z\in \mathcal{C}_R\cap \Lc_{6N}(x_{j'})$. It follows that if $w_s\in \overline{\eta}_o(p+\tau_{n+1})$ for all $1\leq s\leq t$, then $y$ is connected to $z$ in $\pazocal{I}_{n+1}$, thus implying that $\{\pazocal{I}_{n+1}\in A\}$ happens. Since conditionally on $\overline{\eta}_o(p+\tau_{n+\frac12})$ and $\overline{\eta}_c(p+\tau_{n+\frac12})$, each point $w_s\in\overline{\eta}$ has probability at least $(\tau_{n+1}-\tau_{n+\frac12})(w_s)\asymp (\tau_{n+1}-\tau_{n+\frac12})(x_j)=e^{-N^{1-\delta}}\tau(\frac{x_j-x_n}{2N})$ of belonging to $\overline{\eta}_o(p+\tau_{n+1})$, it follows that the the conditional probability of $\{\pazocal{I}_{n+1}\in A\}$ given $\text{CPiv}^n_{x_{j'}}(6N)\cap \pazocal{G}$ is at least $\prod_{s=1}^t(\tau_{n+1}-\tau_{n+\frac12})(w_s)\geq e^{-C N^{\gamma'}} \tau\big(\tfrac{x_{j}-x_n}{2N}\big)^{N^\gamma}$  (recall \eqref{eq:choice_epsilon} and \eqref{eq:tau_k_2}). 
All in all, we obtain
\begin{equation}\label{eq:goodclosepiv_to_b}
\begin{split}
    \P[\text{CPiv}^n_{x_{j'}}(6N)\cap \pazocal{G}]&\leq e^{C N^{\gamma'}}\tau\big(\tfrac{x_{j}-x_n}{2N}\big)^{-N^\gamma}\, \P[\pazocal{I}_{n+1}\in A,~\pazocal{I}_{n+\frac12}\notin A]\\
    &= e^{C N^{\gamma'}}\tau\big(\tfrac{x_{j}-x_n}{2N}\big)^{-N^\gamma}\, q_n
\end{split}
\end{equation}
Combining \eqref{eq:4Npiv_to_8Ncpiv}, \eqref{eq:8Ncpiv_to_2Ncpiv}, \eqref{eq:piv_to_goodclosepiv}, \eqref{eq:goodclosepiv_to_b} and reminding that $\gamma>d-1$, one readily obtains the desired bound \eqref{eq:conv_bound}, thus concluding the proof.
\end{proof}

\section{Local uniqueness}\label{sec:loc_uniq}

In this section we will prove that local uniqueness follows from the fast decay of disconnection obtained in Corollary~\ref{cor:decay_disconnection}. The proofs of this section are adaptation of existing proofs and can be generalized to other models with fast decay of disconnection (as in Corollary~\ref{cor:decay_disconnection}) as long as the model satisfies a ``sprinkled finite energy'' property, meaning that there is a positive (conditional) probability to open a region after a sprinkling.

Notice that for $d\geq3$, the exponent $d-1$ in Corollary~\ref{cor:decay_disconnection} is strictly greater than $1$, which will be helpful in our proof.

Our first step is to prove that, for every $p>p_c$, there exists a \emph{dense cluster} of $\pazocal{V}(p)$ in $\Lc_L$.  We say that a (connected) set $C\subset \R^d$ is \emph{$\ell$-dense} in $\Lc_L$ if $C\cap \Lc_\ell(x)\neq \emptyset$ for every $x\in \ell\Z^d\cap\Lc_L$. Let $\mathcal{D}(L,\ell, p)$ be the event that there exists a cluster of $\pazocal{V}(p)\cap \Lc_{2L}$ which is $\ell$-dense in $\Lc_L$. The proof of the following proposition is inspired by \cite{benjaminitassion17} (see also \cite{DGRS19,DGRST23b,Sev24} for applications of this technique to other models).

\begin{proposition}[Dense cluster]\label{prop:dense_cluster}
   For every $d\geq3$ and $p>p_c$, there exists a constant $C_u=C_u(p)<\infty$ such that 
    \begin{equation*}
        \P[\mathcal{D}(L,C_u(\log L)^{\frac{1}{d-1}},p)]\to 1 ~~~~ \text{as } L\to\infty.
    \end{equation*}
\end{proposition}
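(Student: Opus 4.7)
My plan is to combine the disconnection bound from Corollary~\ref{cor:decay_disconnection} with a Benjamini--Tassion style sprinkling argument in the spirit of \cite{benjaminitassion17}. Fix $p_1 \in (p_c, p)$ and set $\varepsilon := p - p_1 > 0$, and let $\ell := C_u(\log L)^{1/(d-1)}$, where $C_u = C_u(p_1)$ is to be chosen sufficiently large. First I would apply Corollary~\ref{cor:decay_disconnection} at level $p_1$: for each $x \in \ell\Z^d \cap \Lambda_L$,
\[
    \P[\nlr{}{\pazocal{V}(p_1)}{\Lambda_\ell(x)}{\infty}] \le e^{-c\ell^{d-1}}.
\]
Taking $C_u$ large, the right-hand side is at most $L^{-2d}$, so a union bound over the $O((L/\ell)^d)$ sites yields that, with probability tending to $1$, every $\Lambda_\ell(x) \subset \Lambda_L$ is connected to infinity in $\pazocal{V}(p_1)$. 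Truncating any such unbounded path at its first visit to $\partial\Lambda_{2L}$ provides a path in $\pazocal{V}(p_1) \cap \Lambda_{2L}$ from $\Lambda_\ell(x)$ to $\partial\Lambda_{2L}$, so every $\ell$-box of $\Lambda_L$ belongs to some ``boundary-touching'' cluster of $\pazocal{V}(p_1) \cap \Lambda_{2L}$.

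On this high-probability event, the $\ell$-boxes of $\Lambda_L$ are partitioned among finitely many boundary-touching clusters, and the task is to glue them into a single cluster using the sprinkling from $p_1$ to $p$. Adapting the ``everywhere percolating'' strategy of \cite{benjaminitassion17}, for every pair $x, x' \in \ell\Z^d \cap \Lambda_L$ with $\|x - x'\|_\infty \le \ell$ I would prove that with failure probability at most $e^{-c'\ell^{d-1}}$, the boundary-touching clusters passing through $\Lambda_\ell(x)$ and $\Lambda_\ell(x')$ get merged inside a common local box $\Lambda_{C\ell}(\tfrac{x+x'}{2})$ after the sprinkling. A union bound over the $O((L/\ell)^d)$ such pairs then produces, with probability tending to $1$, a single cluster of $\pazocal{V}(p) \cap \Lambda_{2L}$ meeting every $\Lambda_\ell(x) \subset \Lambda_L$, which is precisely the event $\mathcal{D}(L, \ell, p)$. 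The local merging step uses the sprinkled finite energy property (cf.\ Proposition~\ref{prop:domsto}): conditionally on the configuration outside the local box, an $\varepsilon$-sprinkling has positive conditional probability of opening any prescribed bridge between two boundary-touching clusters that come close to each other inside the box.

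The easy part is the first union bound; the assumption $d \ge 3$ (so that $d - 1 > 1$) is exactly what permits the poly-logarithmic choice $\ell = C_u(\log L)^{1/(d-1)}$. The main obstacle is the local merging step: one must quantitatively convert the many ``close encounters'' between two boundary-touching clusters inside a local box into an actual bridge opened by the small sprinkling, adapting \cite{benjaminitassion17} to the correlated continuum setting of Voronoi percolation. This is where most of the technical work lies, and the failure probability produced must decay fast enough in $\ell$ to survive the union bound over all adjacent pairs.
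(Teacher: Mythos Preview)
Your first union-bound step is correct and matches the paper. The gap is in the local merging step: the target failure probability $e^{-c'\ell^{d-1}}$ for merging two boundary-touching clusters inside a box $\Lambda_{C\ell}$ cannot be obtained by a purely local argument. Bridging two clusters that pass through adjacent $\ell$-boxes requires, via the chemical-distance control, opening a path of order $\kappa\ell$ cells, so a single attempt succeeds with probability at most $(p-p_1)^{\kappa\ell}$. To drive the failure probability down to $e^{-c'\ell^{d-1}}$ you would need roughly $\ell^{d-1}(p-p_1)^{-\kappa\ell}$ \emph{independent} close encounters, i.e.\ a number growing exponentially in $\ell$. Inside $\Lambda_{C\ell}$ there is no such supply: the two clusters are only guaranteed to be within distance $O(\ell)$ once (near $x,x'$) and may diverge immediately thereafter, so the number of disjoint bridging opportunities is $O(1)$, not exponential in $\ell$. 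Consequently your union bound over $O((L/\ell)^d)$ pairs does not close.

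The paper's proof (and the original Benjamini--Tassion argument) works at the global scale, not the local one. All boundary-touching clusters cross $\sim\sqrt{L}$ concentric annuli of width $\sqrt{L}$; in each annulus, any nontrivial bipartition of the clusters admits an interface $*$-path of length $\gtrsim\sqrt{L}/\ell$ (Lemma~\ref{lem:crossing_boundary}), yielding $\gtrsim\sqrt{L}/\ell$ disjoint bridging attempts. Since $(p-p')^{\kappa\ell}=e^{-O((\log L)^{1/(d-1)})}$ decays slower than any power of $L$ when $d\ge3$, the failure probability per annulus is at most $e^{-cL^{1/4}}$ (Lemma~\ref{lem:gluing}). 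Each annulus halves the number of cluster-classes, and since there are at most $CL^d$ clusters, $O(\log L)\ll\sqrt{L}$ annuli suffice. The essential point you are missing is that the ``many close encounters'' are supplied by macroscopic crossings of thick annuli, not by anything happening inside a box of size $C\ell$.
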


\begin{proof}
    Fix $p>p'>p_c$ and consider the standard coupling between $\pazocal{V}(p')$ and $\pazocal{V}(p)$ where both have the exact same tessellation, as defined in Section~\ref{sec:comparison}.
    We also fix $L\geq1$, which we may henceforth drop from the notation. 
    Let $C_u=d/c(p')$, where $c(p')$ is the constant given by Corollary~\ref{cor:decay_disconnection}, and set $\ell=\ell(L,p')=C_u(\log L)^{\frac{1}{d-1}}$. By Corollary~\ref{cor:decay_disconnection}, one has $\P[\lr{}{\pazocal{V}(p')}{\Lc_\ell(x)}{\infty}]\geq 1-L^{-d}$. 
    By union bound, one concludes that the event 
    $$\pazocal{A}=\pazocal{A}_L\coloneqq \bigcap_{x\in \ell\Z^d\cap\Lc_{2L}} \{\lr{}{\pazocal{V}(p')}{\Lc_\ell(x)}{\partial \Lc_{2L}}\}$$
    satisfies $\P[\pazocal{A}_L]\to1$ as $L\to\infty$. We also consider the ``good event'' 
    $$\pazocal{G}=\pazocal{G}_L\coloneqq \pazocal G_{2L,\ell,\kappa}\cap \big\{ |\{x\in\overline \eta : C(x)\cap \Lambda_{2L}\neq \emptyset\}| \leq CL^d \big\}.$$ 
    By Lemmas~\ref{lemma:intersection boundary} and \ref{lemma :GL}, we can choose constants $C,\kappa$ such that $\P[\pazocal{G}_L]\to1$ as $L\to\infty$. Also notice that $\pazocal{G}$ is $\overline{\eta}$-measurable. Our goal now is to prove that $\mathcal{D}(L,C_u(\log L)^{\frac{1}{d-1}},p)$ happens with high probability conditionally on $\pazocal{A}\cap\pazocal{G}$.
    Consider the set of boundary clusters
    \begin{equation*} 
    \mathcal{C}\coloneqq \{C\subset \Lc_{2L}:~ \text{$C$ is a cluster in $\pazocal{V}(p')\cap\Lc_{2L}$ such that $C\cap\partial \Lc_{2L}\neq\emptyset$}\}.
    \end{equation*} 
    Given a subset $\omega\subset\R^d$ such that $\pazocal{V}(p')\subset \omega$, we define the relation $\sim_\omega$ in $\mathcal{C}$ by setting
    $$C\sim_\omega C' \text{ if } \lr{\Lc_{2L}}{\omega}{C}{C'}.$$
    It is enough to prove that, with high probability conditionally on $\pazocal{A}\cap\pazocal{G}$, all the clusters in $\mathcal{C'}\coloneqq\{C\in\mathcal{C}:~ C\cap\Lc_L\neq\emptyset\}$ are connected to each other in $\pazocal{V}(p)\cap\Lc_{2L}$, or equivalently $|\mathcal{C}'/\sim_{\pazocal{V}(p)}|=1$
    
    Let $V_i\coloneqq \Lc_{2L-i\sqrt{L}}$, $0\leq i\leq \lfloor\sqrt{L}\rfloor$. We will interpolate between the models $\pazocal{V}(p')$ and $\pazocal{V}(p)$ in $\Lambda_{2L}$ by progressively sprinkling in each annulus $V_{i}\setminus V_{i+1}$ as follows. For every $0\leq i\leq \lfloor\sqrt{L}\rfloor$, we define 
    $$\overline{\eta}_o^i\coloneqq (\overline{\eta}_o(p)\cap (V_0\setminus V_{i}))\cup (\overline{\eta}_o(p')\cap (V_0\setminus V_{i})^{\mathrm{c}}),$$ 
    $$\pazocal{V}^i\coloneqq \{y\in\R^d:~ d(y,\overline{\eta}_o^i)\leq d(y,\overline{\eta})\}.$$ 
    Given $\omega\supset \pazocal{V}(p')$, let
    $$\pazocal{U}_i(\omega)\coloneqq \mathcal{C}_i/\sim_\omega \,,$$
    where 
    $$\mathcal{C}_i\coloneqq \{C\in\mathcal{C}:~ C\cap V_i\neq\emptyset\}.$$
    Finally, we set 
    $$U_i\coloneqq |\pazocal{U}_i(\pazocal{V}^i)|.$$
    Notice that it is enough to prove that $U_{\lfloor\sqrt{L}\rfloor}=1$ with high probability conditionally on $\pazocal{A}\cap \pazocal{G}$, which follows from the following lemma.

    \begin{lemma}\label{lem:gluing}
        For every $0\leq i\leq \lfloor\sqrt{L}\rfloor-8$, one has
        \begin{equation}\label{eq:gluing}
            \P[U_{i+8}> 1\vee U_i/2 \,\big|\, \pazocal{A}\cap\pazocal{G}]\leq e^{-cL^{1/4}}.
        \end{equation}
    \end{lemma}
    \noindent  By Lemma~\ref{lem:gluing} together with a union bound, the following event occurs with high probability
    \[\bigcap_{0\leq i\leq \lfloor\sqrt{L}\rfloor-8} \{U_{i+8}\le  1\vee U_i/2\} \cap \pazocal{A}\cap\pazocal{G}.\]
    On this event, $U_{\lfloor\sqrt{L}\rfloor}>1$ would imply $U_0\ge 2^ {\lfloor\sqrt{L}\rfloor/8-1}$, which contradicts the fact that $U_0\leq CL^{d}$ on $\pazocal{G}$. This yields that $U_{\lfloor\sqrt{L}\rfloor}=1$ on this event, thus concluding the proof. 
\end{proof}

It remains to give the following proof.
\begin{proof}[Proof of Lemma~\ref{lem:gluing}]
    Fix $0\leq i\leq \lfloor\sqrt{L}\rfloor-8$. Roughly speaking, we want to prove that, with very high probability, each cluster hitting $V_i$ either gets merged with another cluster after sprinkling in the annulus $V_i\setminus V_{i+8}$ or dies out before hitting $V_{i+8}$.
    We start by finding a sub-annulus of $V_i\setminus V_{i+8}$ where only a small proportion of clusters die out. For every $\omega\supset \pazocal{V}(p')$ and $j\in\{0,1,2,3\}$, let
    $$\pazocal{U}_i^j(\omega)\coloneqq \{C\in\pazocal{U}_i(\omega):~ C\cap V_{i+2j}\neq\emptyset \text{ and } C\cap V_{i+2j+2}=\emptyset\}.$$
    In the definition above, we abuse the notation by identifying the equivalence class of cluster $C\in\pazocal{U}_i(\omega)$ with its associated $\omega$-cluster. Since $(\pazocal{U}_i^j(\omega))_j$ are disjoint subsets of $\pazocal{U}_i(\omega)$, we can find $j\in\{0,1,2,3\}$ such that $|\pazocal{U}_i^j(\pazocal{V}_i)|\leq |\pazocal{U}_i(\pazocal{V}_i)|/4=U_i/4$. We fix such a $j$ for the rest of the proof and focus on the annulus $V_{i+2j}\setminus V_{i+2j+2}$. 
    
    We now further restrict to one of the two sub-annuli $V_{i+2j}\setminus V_{i+2j+1}$ and $V_{i+2j+1}\setminus V_{i+2j+2}$ as follows. If at least one of the clusters in $\pazocal{U}_i^j(\pazocal{V}_i)$ touches $V_{i+j+1}$, we ``wire'' all clusters in $\pazocal{U}_i^j(\pazocal{V}_i)$ (i.e.~we treat their union as a single element) and focus on the annulus $V_{i+2j}\setminus V_{i+2j+1}$. Otherwise, we forget about all the clusters in $\pazocal{U}_i^j(\pazocal{V}_i)$ and focus on the annulus $V_{i+2j+1}\setminus V_{i+2j+2}$. See Figure~\ref{partition} for an illustration of both cases.
    More precisely, consider the family
    \begin{equation*}
        \tilde{\pazocal{U}}\coloneqq \left\{ \begin{array}{ll}
    \{C\in\pazocal{U}_i(\pazocal{V}_i):~ C\cap V_{i+2j+2}\neq\emptyset\}\cup \big\{\tilde{C}\big\},     &  ~~\text{if } \tilde{C}\cap V_{i+2j+1}\neq \emptyset, \\
    \{C\in\pazocal{U}_i(\pazocal{V}_i):~ C\cap V_{i+2j+2}\neq\emptyset\},     & ~~\text{otherwise.}
    \end{array}
    \right.
    \end{equation*}
    where $\tilde{C}\coloneqq \cup_{C\in\pazocal{U}_i^j(\pazocal{V}_i)} C$ if $\pazocal{U}_i^j(\pazocal{V}_i)\neq \emptyset$ and $\tilde{C}=\emptyset$ otherwise, and the annulus
    \begin{equation*}
        A\coloneqq \left\{ \begin{array}{ll} V_{i+2j}\setminus V_{i+2j+1},  & ~~\text{if } \tilde{C}\cap V_{i+2j+1}\neq \emptyset, \\
    V_{i+2j+1}\setminus V_{i+2j+2}    & ~~\text{otherwise.}\end{array}\right.
    \end{equation*}
\begin{figure}[!h]
    \centering
    \begin{subfigure}[b]{0.4\textwidth}
        \centering         \includegraphics[width=0.9\textwidth]{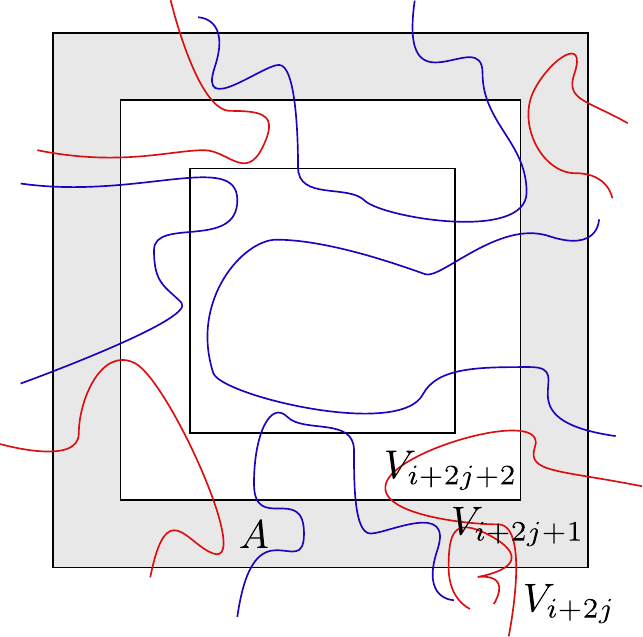}
    \end{subfigure}
    \hspace{1.5cm}
    \begin{subfigure}[b]{0.4\textwidth}
        \centering
         \includegraphics[width=0.9\textwidth]{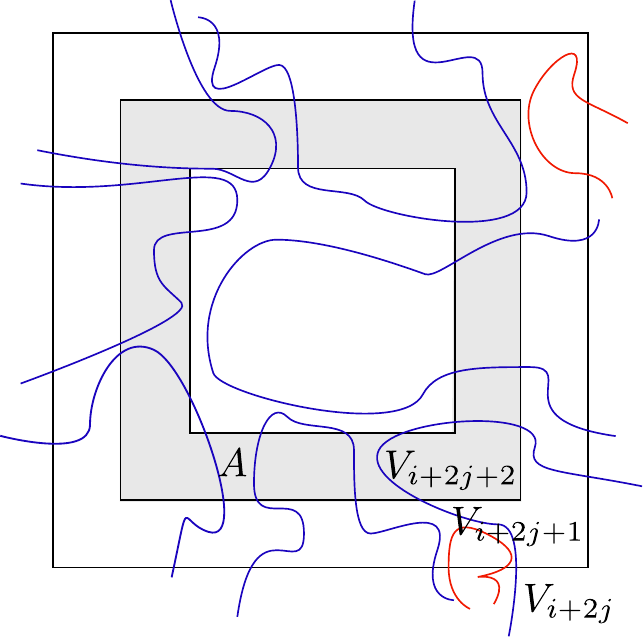}
    \end{subfigure}
    \caption{\label{partition} The clusters in $\pazocal{U}_i^j(\pazocal{V}_i)$, whose union is $\tilde{C}$, are represented in red, while clusters in $\pazocal{U}_i(\pazocal{V}_i)$ that intersect $V_{i+2j+2}$ are represented in blue. On the left we have $\tilde{C}\cap V_{i+2j+1}\neq \emptyset$ and $\tilde{\pazocal{U}}$ consists of blue clusters and $\tilde{C}$ seen as a single cluster. On the right we have $\tilde{C}\cap V_{i+2j+1}=\emptyset$ and $\tilde{\pazocal{U}}$ consists of blue clusters only. In the annulus $A$, the family $\tilde{\pazocal{U}}$ is $\ell$-dense and each of its elements is crossing.}
\end{figure}

Notice that in any case, every element of  $\tilde{\pazocal{U}}$ contains a crossing of the annulus $A$, and that every box $\Lc_\ell(x)\subset A$ intersects at least one element of $\tilde{\pazocal{U}}$. 
Define the following partition of $\tilde{ \pazocal U}$. Let $\overline {\pazocal U}_0$ be the set made of all the elements of $\tilde{ \pazocal U}$ connected to $\tilde C$ in $\pazocal{V}_{i+8}$ and $\overline{ \pazocal U}_1,\dots,\overline {\pazocal U}_m$ be the partition of $\tilde {\pazocal U}\setminus \overline{ \pazocal U}_0$ induced by the equivalence relation $\sim_{\pazocal{V}_{i+8}}$. 
If we assume that $|\overline {\pazocal U}_k|\ge 4$ for all $1\le k \le m $, then we get
    \[U_{i+8}\le\frac 1 4 |\tilde {\pazocal U}\setminus \overline{ \pazocal U}_0|+|\pazocal{U}_i^j(\pazocal{V}_i)|\le \frac 1 2U_i.\]
This cannot occur on the event $\pazocal{E}\coloneqq \{U_{i+8}> 1\vee U_i/2\} \cap \pazocal{A}\cap\pazocal{G}$.
As a result, if $\pazocal{E}$ happens, then there exists a non-trivial partition $\tilde{\pazocal{U}}=\tilde{\pazocal{U}}_1\cup \tilde{\pazocal{U}}_2$ such that $|\tilde{\pazocal{U}}_1|< 4$ and $\nlr{A}{\pazocal{V}_{i+8}}{\tilde{\pazocal{U}}_1}{\tilde{\pazocal{U}}_2}$, where here we abuse the notation again by identifying $\tilde{\pazocal{U}}_1$ and $\tilde{\pazocal{U}}_2$ with the union of the associated $\pazocal{V}_i$-clusters.
As a conclusion, we have
\begin{equation}\label{eq:gluing_proof1}
    \P[\pazocal{E}]\leq \E \Big[ \,1_{\pazocal{G}}\, \sum_{\tilde{U}} \P[\{\tilde{\pazocal{U}}=\tilde{U}\}\cap\pazocal{A} \,|\, \overline{\eta}\, ] \, \sum_{\substack{\tilde{U}=\tilde{U}_1\sqcup \tilde{U}_2 \\ 1\leq|\tilde U_1|< 4}} \P[\,\nlr{A}{\pazocal{V}_{i+8}}{\tilde U_1}{\tilde U_2} \,|\, \{\tilde{\pazocal{U}}=\tilde{U}\}\cap\pazocal{A},\, \overline{\eta}\, ] \Big].
\end{equation}
Notice that the sum above makes sense since there are only finitely many possibilities for $\tilde{U}$ given $\overline{\eta}$ (which determines the graph $G(\overline{\eta})$). Also recall that $\pazocal{G}$ is $\overline{\eta}$-measurable.
    
For any pair of set $\tilde{U}_1$, $\tilde{U}_2$ as in \eqref{eq:gluing_proof1}, one can apply Lemma~\ref{lem:crossing_boundary} below to the sets $\{x\in \Z^d: \Lambda_\ell(\ell x)\cap \tilde U_1\neq\emptyset\}$ and  $\{x\in \Z^d: \Lambda_\ell(\ell x)\cap \tilde U_2\neq \emptyset\}$ to obtain a $*$-path $x_1,\cdots,x_n\in  \Z^d$, $n\geq c_d\sqrt{L}/\ell$, crossing $A/\ell$ such that $\Lc_{3\ell}(\ell x_k)$ intersects both $\tilde{U}_1$ and $\tilde{U}_2$.
On the event $\pazocal G_{2L,\ell,\kappa}\subset \pazocal{G}$, we have $|\{i\in\{1,\dots,n\}: \ell x_i+\pazocal{D}(3\ell,\kappa\ell)\text{ occurs}\}|\ge n/2$.
Hence, there exist $y_1,\cdots,y_{m}\subset \{x_1,\cdots,x_n\}$ with $m\ge \frac{n}{2(12)^d}$, such that the events  $\ell y_i+\pazocal{D}(3\ell,\kappa\ell)$ occur and $\|y_i-y_j\|_{\infty}\geq 12$ for all $i\neq j$. In particular, there exists a path in $G(\overline\eta)\cap \Lambda_{6\ell}(\ell y_k)$ of length less than $\kappa \ell$ joining $\tilde U_1$ and $\tilde U_2$. Therefore, on the event $\pazocal{G}$, we have
\begin{equation}\label{eq:gluing_proof2}
    \begin{split}
        \P[\, \nlr{A}{\pazocal{V}_{i+8}}{\tilde{U}_1}{\tilde{U}_2} \,|\, \{\tilde{\pazocal{U}}=\tilde{U}\}\cap\pazocal{A},\, \overline{\eta}\,] &\leq \P\Big[\bigcap_{k=1}^{n} \nlr{D_k}{\pazocal{V}_{i+8}}{\tilde{U}_1}{\tilde{U}_2} \,|\, \{\tilde{\pazocal{U}}=\tilde{U}\}\cap\pazocal{A},\, \overline{\eta}\,\Big] \\
        &\leq \big(1-(p-p')^{\kappa \ell}\big)^m
        \leq e^{-cL^{1/4}},
        \end{split}
\end{equation}
for some constant $c>0$, where $D_k\coloneqq \Lambda_{6\ell}(\ell y_k)$ are disjoint domains. In the last line of \eqref{eq:gluing_proof2} we used the fact that each cell in this path is open in $\pazocal{V}_{i+8}$ with conditional probability at least $p-p'$. Combining \eqref{eq:gluing_proof1}, \eqref{eq:gluing_proof2} and the fact that $|\tilde U|\leq CL^d$ on $\pazocal{G}$, we obtain
$$\P[\pazocal{E}]\leq \sum_{k=1}^3 {CL^d\choose k} e^{-cL^{1/4}}\leq e^{-c'L^{1/4}},$$
for some $c'>0$ and $L$ large enough, thus concluding the proof. 
\end{proof}

We have used the following deterministic lemma, which can be found in \cite[Lemma 2.1]{DGRST23b}.
\begin{lemma}\label{lem:crossing_boundary}
Let $U, V \subset \Z^d$ be such that $U \cup V = (\Lc_m \setminus \Lc_{n-1})\cap \Z^d$ and both contain a crossing from $\partial \Lc_n$ to $ \partial \Lc_m$. There exists a $*$-path $\pi \equiv (\pi(i))_{1 \le i \le |\pi|}$ joining $\partial \Lc_n$ and $ \partial \Lc_m$ with
    \begin{equation*}\label{eq:cond_path}
    d_\infty(\pi(i), U) \vee d_\infty(\pi(i), V)  \le 1 \text{\,\,for all $1 \le i \le |\pi|$.}
    \end{equation*}
\end{lemma}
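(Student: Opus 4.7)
The plan is to define the natural interface
\[W \coloneqq \bigl\{x\in\Z^d : d_\infty(x,U)\le 1\ \text{and}\ d_\infty(x,V)\le 1\bigr\}\]
and to show by contradiction that $W$ contains a $*$-path joining $\partial\Lc_n$ and $\partial\Lc_m$, which can then be taken as $\pi$. Writing $A_0\coloneqq(\Lc_m\setminus\Lc_{n-1})\cap\Z^d=U\cup V$ and setting $S\coloneqq U\setminus W$, $T\coloneqq V\setminus W$, I would record two short observations that drive everything: (i) $S$ and $T$ are \emph{$*$-insulated}, since for $x\in S$ and $y\in T\subset V$ one has $\|x-y\|_\infty\ge d_\infty(x,V)>1$; and (ii) $U\cap V\subset W$, so $S\cap V=\emptyset$ and $T\cap U=\emptyset$.

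Suppose for contradiction that no such $*$-path exists. I would then invoke the standard lattice duality between $*$-connectivity and nearest-neighbor connectivity in $\Z^d$ (planar duality when $d=2$, and a Tim\'ar-type boundary-connectivity argument applied to the outer nearest-neighbor boundary in $A_0$ of the union of those $*$-components of $W\cap A_0$ touching $\partial\Lc_n$ in general dimension) to produce a nearest-neighbor connected barrier $\Sigma\subset A_0\setminus W=S\sqcup T$ that meets every nearest-neighbor path in $A_0$ from $\partial\Lc_n$ to $\partial\Lc_m$. Being nearest-neighbor connected, $\Sigma$ is in particular $*$-connected, so (i) forces $\Sigma$ to lie entirely in $S$ or entirely in $T$. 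Without loss of generality $\Sigma\subset S$, and (ii) gives $\Sigma\cap V=\emptyset$. But the hypothesized $V$-crossing $\beta$ is a nearest-neighbor path in $V\subset A_0$ joining $\partial\Lc_n$ and $\partial\Lc_m$, so it must meet $\Sigma$---contradicting $\Sigma\cap V=\emptyset$.

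The single delicate step is the invocation of the duality yielding the nearest-neighbor connected barrier $\Sigma$ from the non-existence of a $*$-crossing in $W$: this is immediate in dimension two, and in higher dimensions it rests on standard boundary-connectivity theorems for the hypercubic lattice. Once $\Sigma$ is in hand, the contradiction is essentially a one-liner, being entirely symmetric in $(\alpha,U,S)$ and $(\beta,V,T)$ and driven by the $*$-insulation of $S$ from $T$. Note also that only one of the two crossings, say $\beta$, is used to derive the contradiction; the role of $\alpha$ is hidden in the fact that $\beta$ could have been chosen as $\alpha$ in the opposite WLOG, and more importantly in ensuring (via $U\cup V=A_0$) that the dichotomy $\Sigma\subset S$ or $\Sigma\subset T$ is genuinely exhaustive.
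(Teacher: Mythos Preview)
The paper does not give its own proof of this lemma; it simply cites \cite[Lemma~2.1]{DGRST23b}. So there is no in-paper argument to compare against, and your proposal must stand on its own.

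In dimension $d=2$ your scheme is fine: planar duality in the annulus converts the absence of a $*$-crossing in $W$ into a nearest-neighbour circuit in $A_0\setminus W$, and the rest of the argument goes through cleanly.

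In $d\ge 3$, however, the step you flag as ``delicate'' is a genuine gap. You assert that a ``Tim\'ar-type boundary-connectivity argument applied to the outer nearest-neighbor boundary in $A_0$ of the union of those $*$-components of $W\cap A_0$ touching $\partial\Lc_n$'' produces a nearest-neighbour connected barrier $\Sigma\subset A_0\setminus W$. But Tim\'ar's theorem, applied with the pair (nearest-neighbour, $*$) on $\Z^d$, says: if $C$ is $*$-connected and $D$ is a nearest-neighbour component of $\Z^d\setminus C$, then the \emph{inner} boundary $\{c\in C: c\sim_{\mathrm{nn}} D\}$ is $*$-connected. This lies inside $C\subset W^*$, not in $A_0\setminus W$ where you need $\Sigma$. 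To get connectivity of the \emph{outer} boundary $\{x\in D: x\sim_{\mathrm{nn}} C\}$ you would have to apply Tim\'ar with the roles reversed, which requires $C$ to be a nearest-neighbour component of $\Z^d\setminus D$; but your $C=W^*$ is only $*$-connected, so it may split into several nearest-neighbour pieces, and the conclusion fails. Note that even $*$-connectivity of $\Sigma$ would suffice for the remainder of your argument (since $S$ and $T$ are $*$-insulated), but standard boundary-connectivity results do not deliver even this weaker statement in your setup.

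A repair is possible along related lines: instead of working with the $*$-components of $W$, work with the nearest-neighbour component $C$ of $U\cup(\Lc_{n-1}\cap\Z^d)$ containing the origin. This set \emph{is} nearest-neighbour connected and reaches $\partial\Lc_m$ via $\alpha$; Tim\'ar then applies legitimately to $C$, and one can extract the desired $*$-path by analysing how $\beta$ interacts with $C$ and its boundary. This is presumably closer to what \cite{DGRST23b} does.
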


With the existence of a dense cluster in hands, we can now prove uniqueness of crossing clusters.

\begin{proof}[Proof of Theorem~\ref{thm:unique}]
Fix $p>p'>p_c$ and consider the standard coupling between $\pazocal{V}(p')$ and $\pazocal{V}(p)$ introduced in Section~\ref{sec:comparison} and recall that $G(\overline \eta)$ denotes the embedded graph induced by the Voronoi tessellation of $\overline{\eta}$, see Section~\ref{sec:preliminaries}.
Condition on the event 
\[\pazocal A_L\coloneqq\mathcal{D}(L,\ell,p')\cap \cG_{L,\ell,\kappa}\cap \{\forall  x\in \overline \eta : C(x)\cap \Lambda_L\ne \emptyset, \mathrm {diam}(C(x))\le \ell\}\cap \{|\{x\in\overline \eta : C(x)\cap \Lambda_L\}| \leq CL^ {d}\},\] 
where $\ell=C_u(p')(\log L)^{\frac{1}{d-1}}$. By Proposition~\ref{prop:dense_cluster}, Lemmas \ref{lemma:diam},  \ref{lemma:intersection boundary} and \ref{lemma :GL}, we get that $\P[\pazocal A_L]\to1$ as $L\to\infty$.
Now condition on $\overline \eta$ and the $p'$-open cells. Note that this information is sufficient to determine $\pazocal A_L$ and that we work conditionally on $\pazocal A_L$. 
For all $x\in \ell \Z^ d$ such that $x+\pazocal{D}(3\ell,\kappa\ell )$ occurs, denote by $\mathcal B(x)$ the set of cells intersecting $\Lambda_{6\ell}(x)$ of chemical distance less than $\kappa \ell$ from the cell $C(x)$, that is
\[\mathcal B(x)\coloneqq \{y\in \overline \eta :\, C(y)\cap \Lambda_{6\ell}(x)\ne\emptyset ,\,  \text{there exists a path between $x$ and $y$ in $G(\overline \eta)$ of length at most $\kappa \ell$}\}. \]
By definition of the event  $x+\pazocal{D}(3 \ell,\kappa\ell )$, we get $\Lambda_{3\ell}(x)\subset\cup_{y\in\mathcal B(x)}C(y)$. Moreover, on $\pazocal A_L$, we have $\cup_{y\in\mathcal B(x)}C(y)\subset \Lambda_{7\ell}(x)$.

Let $w\in\overline \eta$ such that $C(w)\cap \partial \Lambda_{L/2}\ne \emptyset$. Recall that we condition on the tiling $G(\overline{\eta})$ and the $p'$-state of each cell, which on the event $\pazocal{A}_L$ implies the existence of a $p'$-open cluster $\mathcal{C}$ which is $\ell$-dense in $\Lc_L$. Our goal is to deduce that with very high probability (actually, stretched exponential close to one as $L$ grows) either the cluster of $w$ does not intersect $\partial\Lc_{L}$ or it is connected to $\mathcal{C}$.
We will prove this by exploring the cluster of $w$ in $\pazocal{V}(p)$, revealing one by one the state of the cells that are neighbours to $p$-open cells of the cluster of $w$ that have been explored. At each stage of our exploration, we stop when one of these following cases occur:
\begin{itemize}
    \item there is no more cells to explore (all the neighbours of the open cells have been explored), or all the cells left to explore have their center outside $\Lambda_{2L}$,
    \item after revealing the state of an explored cell, we discover that the cluster of $w$ is connected to $\mathcal{C}$.
\end{itemize}
If at some stage of the exploration we examine the first cell $C(y)$, $y\in\overline\eta $, of $\mathcal B(x)$ for some $x\in \ell \Z^ d$ such that $x+\pazocal D(3\ell,\kappa\ell )$ occurs, then, we immediately examine all the cells in $\mathcal B(x)$ at the same time. Recall that the dense cluster $\mathcal{C}$ at $p'$ intersects $\Lambda_\ell(x)$ and so it contains the cell of some $z\in \mathcal B(x)$. Moreover, by definition, the graph distance between $z$ and $y$ within $\mathcal B(x)$ is less than $2 \kappa \ell$. By construction of the exploration, the cells in $\mathcal B(x)$ have not been explored. It follows that there is a probability at least $(p-p')^ {2\kappa\ell}$ that the $p$-cluster of $w$ connects with $\mathcal{C}$ in $\mathcal B(x)$. If this happens, we stop the exploration. Otherwise, we reveal all the cells in $\mathcal B(x)$ and resume the exploration. 

Let us assume that the cluster of $w$ intersects $\partial \Lambda_L$ but does not intersect $\mathcal{C}$. In particular, the exploration stops when all the cells left to explore have their centers outside $\Lambda_{2L}$. Hence, the explored region has diameter at least $L/2$. On the event $\cG_{L,\ell,\kappa}$, the exploration has encountered at least $L/{2\ell}$ sets of the form $\mathcal B(x)$ for some $x\in \ell \Z^ d$ such that $x+\pazocal{D}(3 \ell,\kappa\ell )$ occurs. In particular, there exist $x_1,\dots,x_m$ such that $m\ge \frac L{2\ell (14) ^ d} $ such that $x_i+\pazocal{D}(3\ell,\kappa\ell )$ occurs for all $i$ and $\mathcal B(x_i)\cap \mathcal B(x_j)=\emptyset$ for $i\ne j$.
Finally, it yields that conditionally on $\pazocal A_L$, the probability that the $p$-cluster of $w$ crosses $\Lambda_L\setminus \Lambda_{L/2}$ without intersecting the dense cluster is at most
\begin{equation*}
    \big( 1- (p-p')^{2\kappa\ell} \big)^{ \frac{L}{2\ell (14)^d} } \leq e^{-cL^{1/2}},
\end{equation*}
where we have used that $\ell =o(\log L)$ for $d\ge 3$.
We conclude the proof by a union bound over all the cells intersecting $\partial \Lambda_{L/2}$ (on the event $\pazocal A_L$ there are at most $CL^d$ such cells).
\end{proof}

The proof of Theorem~\ref{thm:sharpness} will follow from a standard renormalization argument. We need to define a notion of good and bad box. 
We say that a site $x\in \Z^ d$ is $N$-good if there exists a unique cluster of diameter larger than $N/4$ in $\Lambda_{4N}(2Nx)$ and this cluster moreover intersects all the $3^d$ sub-boxes of the form $\Lambda_N(2Ny)$ with $y$ such that $\|y-x\|_\infty \le 1$. We say that $x$ is $N$-bad if it is not $N$-good. The following lemma states that a box is typically good for large $N$.
\begin{lemma}\label{lem:contgoodbox}For every $d\ge 3$ and $p>p_c(d)$ and $x\in\Z^d$, one has
    \begin{equation}\label{eq:good}
    \P[\text{$x$ is $N$-good}] \rightarrow 1 ~\text{ as } N\to\infty.
    \end{equation}
    
\end{lemma}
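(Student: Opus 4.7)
The plan is to combine Theorem~\ref{thm:unique}, Corollary~\ref{cor:decay_disconnection}, and a sprinkling argument in the spirit of the proof of Theorem~\ref{thm:unique}. By translation invariance, we may take $x=0$.

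First, I would fix $p>p'>p_c$ and consider the standard coupling $\pazocal V(p')\subset\pazocal V(p)$ from Section~\ref{sec:comparison}. Applying Proposition~\ref{prop:dense_cluster} to $\pazocal V(p')$ at scale $L=4N$ yields, with probability tending to $1$, an $\ell$-dense cluster $\mathcal D\subset\pazocal V(p')\cap\Lambda_{8N}$ in $\Lambda_{4N}$, with $\ell = C_u(p')(\log 4N)^{1/(d-1)}\ll N$. Since $\ell\ll N$, the set $\mathcal D$ meets every sub-box $\Lambda_N(2Ny)$ for $\|y\|_\infty\leq 1$, and its $\ell^\infty$-diameter is at least $8N-2\ell\gg N/4$.

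Next I would argue that, after sprinkling from $p'$ to $p$, the portion $\mathcal D\cap\Lambda_{4N}$ is contained in a single cluster $\mathcal G$ of $\pazocal V(p)\cap\Lambda_{4N}$. The key observation is that if two pieces of $\mathcal D\cap\Lambda_{4N}$ are separated inside $\Lambda_{4N}$, then by $\ell$-density of $\mathcal D$ one can link them by a chain of good balls $\pazocal D(3\ell,\kappa\ell)$ (as used in Lemma~\ref{lemma :GL}) lying entirely in $\Lambda_{4N}$; the sprinkled cells then open a $p$-path along such a chain with conditional probability at least $(p-p')^{O(\kappa\ell)}$. Since $\ell=O((\log N)^{1/(d-1)})$, a union bound over all pairs of pieces of $\mathcal D\cap\Lambda_{4N}$ (there are at most polynomially many, by Lemma~\ref{lemma:intersection boundary}) shows that the merging occurs with probability $\to 1$.

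By the same mechanism, any cluster $C$ of $\pazocal V(p)\cap\Lambda_{4N}$ of diameter greater than $N/4\gg\ell$ must pass within $\ell^\infty$-distance $\ell$ of some point of $\mathcal D$ (by $\ell$-density), and sprinkling then connects $C$ to $\mathcal D$ inside $\pazocal V(p)\cap\Lambda_{4N}$ with overwhelming conditional probability. Hence, with probability tending to $1$, every such $C$ is merged with $\mathcal G$, which is therefore the unique cluster of $\pazocal V(p)\cap\Lambda_{4N}$ of diameter greater than $N/4$. On this event, $\mathcal G$ has diameter comparable to $N$, intersects every sub-box $\Lambda_N(2Ny)$ (because $\mathcal D\subset\mathcal G$ does), and is the unique cluster of such large diameter, which is precisely the $N$-good event.

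The main obstacle is to execute the sprinkling step \emph{inside} $\Lambda_{4N}$ rather than in the ambient $\Lambda_{8N}$ in which $\mathcal D$ is constructed. This requires combining the $\ell$-density of $\mathcal D$ in $\Lambda_{4N}$ with the chemical-distance control provided by Proposition~\ref{prop:chemical_dist}, so that the connecting chains of good balls can be confined to $\Lambda_{4N}$; this is a technical but routine adaptation of the exploration argument used at the end of the proof of Theorem~\ref{thm:unique}.
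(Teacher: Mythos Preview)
Your route is different from the paper's and, as written, contains a genuine gap.

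The paper's proof is much simpler: it uses Theorem~\ref{thm:unique} (local uniqueness) as a black box, combined with Corollary~\ref{cor:decay_disconnection}, and applies the event $\overline{\pazocal U}(L,p)\coloneqq\pazocal U(L,p)\cap\{\lr{}{\pazocal V(p)}{\Lambda_{L/4}}{\partial\Lambda_{2L}}\}$ at scale $L=N/4$ on a grid of $L/4$-spaced centers inside $\Lambda_{3N}$. A union bound gives that all these translated events hold simultaneously with high probability, and a short deterministic chaining argument (using overlapping annuli) then shows that on this intersection any two clusters of diameter $\ge L=N/4$ in $\Lambda_{3N}$ are connected within $\Lambda_{3N+2L}\subset\Lambda_{4N}$, and each sub-box $\Lambda_N(2Ny)$ is visited. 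No sprinkling, no dense cluster, no exploration is needed at this stage---all of that was already spent in proving Theorem~\ref{thm:unique}.

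By contrast, you reopen the machinery of Theorem~\ref{thm:unique} (dense cluster plus sprinkling plus exploration) and try to rerun it confined to $\Lambda_{4N}$. Beyond being unnecessarily heavy, your Step~2 is not correct as stated. Opening a $p$-path in \emph{one} good ball costs $(p-p')^{O(\kappa\ell)}$, but merging two fixed pieces of $\mathcal D\cap\Lambda_{4N}$ that are far apart requires a chain of such openings of length $O(N/\ell)$, so the cost of forcing that specific chain is $(p-p')^{O(\kappa N)}$, which is exponentially small; and the ``union bound over all pairs of pieces'' you invoke would need each pair to merge with probability close to $1$, which you have not shown. What is actually needed here is precisely the many-independent-tries mechanism of Lemma~\ref{lem:gluing} or the exploration argument you cite for Step~3---but you only invoke that for Step~3, not Step~2. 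In Step~3 there is also a conflation: you cannot ``sprinkle'' to connect a cluster $C$ that is already defined in $\pazocal V(p)$; the exploration must reveal the $p$-states progressively, starting from a fixed cell and then union-bounding over starting cells, exactly as in the proof of Theorem~\ref{thm:unique}. Both issues are fixable, but the fix amounts to re-proving Theorem~\ref{thm:unique} inside $\Lambda_{4N}$, whereas the paper simply applies that theorem.
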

\begin{proof}[Proof of Lemma \ref{lem:contgoodbox}]
Let $L=N/4$. Without loss of generality let us prove the result for $x=0$. 
Define by $\overline{\pazocal{U}}(L,p)$ the event that $\pazocal{V}(p)\cap \Lc_{2L}$ contains only one cluster crossing the annulus $\Lc_L\setminus \Lc_{L/2}$, which in turn crosses the annulus $\Lc_{2L}\setminus \Lc_{L/4}$ as well.
In other words, we have $\overline{\pazocal{U}}(L,p)=\pazocal{U}(L,p)\cap\{\lr{}{\pazocal{V}(p)}{\Lc_{L/4}}{\partial \Lc_{2L}}\}$, so combining Theorem~\ref{thm:unique} and Corollary~\ref{cor:decay_disconnection}, we obtain
    \begin{equation}\label{eq:uniquedef2}
    \P[\overline{\pazocal{U}}(L,p)] \rightarrow 1 ~\text{ as } L\to\infty.
    \end{equation}
Let us denote by $\cE$ the following event
\[\cE\coloneqq\bigcap _{x\in\frac L 4\Z^ d\cap  \Lambda_{3N}}(x+\overline{\pazocal{U}}(L,p)).\]
By union bound, we get
\begin{equation*}
    \P[\cE^{\mathrm{c}}]\le 2\left(\frac {24N}L\right) ^ d\P[\overline{\pazocal{U}}(L,p)^{\mathrm{c}}].
\end{equation*}
In particular, thanks to \eqref{eq:uniquedef2}, we have that the probability of $\cE$ goes to $1$ with $N$.

We claim that on the event $\cE$ the box $0$ is $N$-good. Let us assume that there exist two distinct clusters $C_1,C_2$ of diameter at least $L$ in $\Lambda_{3N}$. Then there exist $x,y \in\frac L 4\Z^ d\cap  \Lambda_{3N}$ such that $C_1$ crosses $x+\Lambda_L\setminus \Lambda_{L/2}$ and $C_2$ crosses $x+\Lambda_L\setminus \Lambda_{L/2}$. Let us first prove that if $x$ and $y$ are neighbours then the two clusters are connected in $\Lambda_{3N+2L}$. By definition of $x+\overline{\pazocal{U}}(L,p)$, $C_1$ also crosses the annulus $x+ \Lambda_{2L}\setminus \Lambda_{L/4}$. Since $x+\Lambda_{L/4}\subset y+\Lambda_{L/2}$ and $y+\Lambda_L\subset x+\Lambda_{2L}$, then $C_1$ also crosses $y+\Lambda_L\setminus \Lambda_{L/2}$. On the event $y+\overline{\pazocal{U}}(L,p)$, the two clusters $C_1$ and $C_2$ are connected in $\Lambda_{3N+2L}$. In the case where $x$ and $y$ are not neighbours, we fix any deterministic path between $x$ and $y$ inside $\frac L4\Z^ d\cap  \Lambda_{3N}$ and by iterating the previous argument, we prove that on the event $\cE$, the clusters $C_1$ and $C_2$ are connected in $\Lambda_{3N+2L}$. The claim follows.
    
\end{proof}
\begin{proof}[Proof of Theorem~\ref{thm:sharpness}]
 Let $\delta>0$ be small enough to be chosen later depending on $d$. The event $\{\text{$x$ is $N$-good}\}$ is not short-range dependent due to the potential presence of arbitrary large cells intersecting $\Lambda_{3N}(2Nx)$. For that reason, we define the notion of being $N$-super-good as the event of being $N$-good and that all the cells intersecting $\Lambda_{3N}(2Nx)$ are contained in $\Lambda_{4N}(2Nx)$. We say that $x$ is $N$-super-bad if it is not $N$-super-good. Since the maximum diameter of a cell in $\Lambda_{4N}(2Nx)$ is $8\sqrt d N$, then the event that $x$ is $N$-super-good only depends on the configuration inside $\Lambda_{4(1+2\sqrt d)N}(2Nx)$. 
 Note that if there is a cell intersecting $\Lambda_{3N}(2Nx)$ and not contained in $\Lambda_{4N}(2Nx)$, then there is a cell of diameter at least $N$ intersecting $\Lambda_{3N}(2Nx)$. Note that by Lemma~\ref{lemma:diam}, we have
that there exists $c>0$ such that 
\begin{equation}
    \P[\text{$x$ is $N$-good but not $N$-super-good}]\le e^ {-cN^ d}.
\end{equation}
 By Lemma \ref{lem:contgoodbox}, there exists $N\ge 1$ large enough such that
\[\P[\text{$x$ is $N$-super-good}]\ge 1-\delta.\]
Let $\mathcal{C}_0(p)$ denote the cluster of $0$ in $\pazocal{V}(p)$.
Assume that $\mathcal{C}_0(p)$ is finite and $\mathrm{diam} \mathcal{C}_0(p)\ge N$. Let $C$ be the set of sites such that the corresponding boxes intersect $\mathcal{C}_0(p)$, that is 
\[C\coloneqq\{x\in\Z^ d: \mathcal{C}_0(p)\cap \Lambda_N(2Nx)\ne \emptyset\}.\]
Denote $\partial^{int} C$ be the set of sites in $x\in C$ with a neighbour which is connected to infinity in $\Z^d \setminus C$.
We claim that there exist $N\geq1$ and $c>0$ such that for every $n\geq2$
\begin{equation}\label{eq:claimintbound}
    \P[|\partial^{int} C|\ge n]\le e^ {-cn}.
\end{equation}
To conclude the proof from the previous inequality, it is sufficient to note that there exists $c_0>0$ depending on $N$ such that if $\mathrm{diam} (\mathcal{C}_0(p))\ge R$ then one has $|\partial^{int} C|\ge c_0 R$, and if $\Vol(\mathcal{C}_0(p))\ge R$ then, using a standard isoperimetric inequality on $\Z^d $, one has $|\partial^{int} C|\ge c_0 R^ {\frac{d-1}d}$.

We now turn to the proof of \eqref{eq:claimintbound}.
It is easy to check that if $ |\partial^{int} C|\ge 2$, then $\mathrm{diam} \mathcal{C}_0(p)\ge N$ and every $x\in\partial^{int} C$ is $N$-bad as there is at least one sub-box of $\Lambda_{3N}(2Nx)$ not intersecting $\mathcal{C}_0(p)$. In particular, every $x\in\partial^{int} C$ is $N$-super-bad. Moreover, by for instance \cite[Theorem 5.1]{timar2007cutsets}, the set $\partial^{int} C$ is $*$-connected. 
Let $\Gamma$ be a $*$-connected set. There exists $\Gamma'$ such that $|\Gamma'|\ge c_d|\Gamma|$ and for all $x\ne y \in \Gamma '$, we have $\Lambda_{4(1+2\sqrt d)N}(2Nx)\cap \Lambda_{4(1+2\sqrt d)N}(2Ny)=\emptyset $.
It follows that 
\begin{equation}
    \P[\forall x\in\Gamma \quad \text{$x$ is $N$-super-bad}]\le \P[\forall x\in\Gamma' \quad \text{$x$ is $N$-super-bad}]\le \delta ^ {c_d|\Gamma|}.
\end{equation}
 Inequality \eqref{eq:claimintbound} follows easily by a union bound over all possible sets $\partial^{int} C$. Note that since $C$ contains $0$, if $\partial ^{int}C$ is of size $k$ then it has to intersect the box $\Lambda_k$. By union bound, we get
 \begin{equation*}
       \P[|\partial^{int} C|\ge n]\le \sum_{k\ge n}\sum_{x\in \Lambda_k}\sum_{\Gamma \in {\rm Animals}_x^ k}\P[ \text{$x$ is $N$-bad} \,\, \forall x\in \Gamma ]\le \sum_{k\ge n}(2k)^ d7^ {dk}\delta ^ {c_dk}\le e^ {-cn}
 \end{equation*}
 where ${\rm Animals}_x^ k$ is the set of $*$-connected component containing $x$ of size $k$. We used in the second inequality that $|{\rm Animals}_x^ k|\le 7^ {dk}$ (see for instance \cite[(4.24) p81]{Gri99a}) and we choose $\delta$ small enough depending on $d$ such that the last inequality holds.
\end{proof}

\begin{remark}\label{rk:section 4}
Let us briefly explain how one could prove that there exists $c>0$ such that for all $L\ge 1$
    \begin{equation}
        \P[\pazocal{U}(L,p)^{\mathrm{c}}]\le e ^ {-cL}.
    \end{equation}
On the event $\pazocal{U}(L,p)^{\mathrm{c}}$, there are at least two disjoint clusters $C_1$ and $C_2$ crossing the annulus $\Lambda_L\setminus \Lambda_{L/2}$. Fix $N\ge1$ be large enough and consider the macroscopic lattice where a site $x\in\Z^d$ corresponds to the box $\Lambda_N(2Nx)$. Define $U$ to be the set of macrosocopic sites such that their corresponding box intersects $C_1\cap \Lambda_L$. Let $V$ be the set of macrosocopic sites such that the corresponding box intersects $\Lambda_L\cap C_2$ or intersects $\Lambda_L$ but not $C_1$. Both $U$ and $V$ contains a crossing of $\Lambda_{L/2N}\setminus \Lambda_{L/4N}$.  From Lemma \ref{lem:crossing_boundary}, we can find a macroscopic $*$-path of length at least $L/4N$ such that any element of the path is at distance at most $1$ from $U$ and $V$. It is easy to check that all the sites in this path are $N$-bad. The remaining of the proof is then very similar to the proof of Theorem \ref{thm:sharpness}.
\end{remark}

\section{Grimmett--Marstrand for $\pazocal{V}_N$}\label{sec:Grim_Mars}

In this section we prove Theorem~\ref{thm:disc_decay_truncated}. The proof is an adaptation of the argument of Grimmett--Marstrand \cite{GriMar90}. The technicalities of the proofs are specific to Voronoi percolation, but the adaptation of \cite{GriMar90} should be possible for any reasonable percolation model with finite range dependence and satisfying a sprinkled finite energy property (see Proposition~\ref{prop:domsto} below).

Denote $ \mathrm{Ber}(\delta)$ the distribution of a Bernoulli random variable of parameter $\delta$.
Let $\omega_\delta \sim_d \mathrm{Ber}(\delta)^ {\otimes \Z^ d}$ independent of $\eta$ and define
\[\omega_\delta ^N\coloneqq \bigcup_{x\in\Z^ d: (\omega_\delta)_x=1}\overline{\Lu_N(Nx)}\]
where $\overline A$ denotes the closure of the set $A$.
In this section, we will prove the two following propositions from which Theorem~\ref{thm:disc_decay_truncated} follows readily.
\begin{proposition}\label{prop:domsto}
    For every $N\ge1$ and $\ep>0$, there exists $\delta=\delta(N,\ep)>0$ such that for every $p\in(\ep,1-\ep)$ the following stochastic dominations hold
    \begin{align}
    \label{eq:domsto_1}
    \pazocal{V}_N(p-\varepsilon) &\preceq \pazocal{V}_N(p)\setminus \omega^N_\delta,\\
    \label{eq:domsto_2}
     \pazocal V_N(p+\ep) &\succeq \pazocal V_N(p)\cup \omega_\delta ^N.
    \end{align}
\end{proposition}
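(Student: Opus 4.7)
I focus on \eqref{eq:domsto_2}; the proof of \eqref{eq:domsto_1} is analogous and actually simpler. Working on the natural coupling of $\pazocal{V}_N(p)$ and $\pazocal{V}_N(p+\ep)$ through the single Poisson process $\eta$, the inclusion $\pazocal{V}_N(p)\subseteq\pazocal{V}_N(p+\ep)$ holds almost surely, so the task reduces to producing (on a suitable common probability space) a $\mathrm{Ber}(\delta)^{\otimes\Z^d}$ field $\omega_\delta$ having the correct joint law with $\pazocal{V}_N(p)$ and such that $\omega_\delta^N\subseteq\pazocal{V}_N(p+\ep)$ almost surely.

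For each $x\in\Z^d$, I introduce a local forcing event $E_x$, measurable with respect to $\eta$ restricted to $R_x\times[0,1]$ where $R_x := \Lc_{2N}(Nx)$, defined as the intersection of (a) $\eta\cap(R_x\times(p+\ep,1])=\emptyset$ (no closed points in $R_x$ after sprinkling), and (b) the $\R^d$-projection of $\eta\cap(R_x\times[0,\ep/2])$ is $(N/2)$-dense in $R_x$ (every point of $R_x$ is within Euclidean distance $N/2$ from some such projection). On $E_x$, every $y\in\overline{\Lu_N(Nx)}$ sits at Euclidean distance $\geq N$ from $\R^d\setminus R_x$, so (a) together with the truncation rule (no truncation happens inside $R_x$ since no closed points are there) yields $d(y,\overline\eta_c^N(p+\ep))\geq N$, while (b) yields $d(y,\overline\eta_o^N(p+\ep))\leq N/2$; both defining inequalities of $\pazocal V_N(p+\ep)$ are hence satisfied, so $\overline{\Lu_N(Nx)}\subseteq\pazocal V_N(p+\ep)$. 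Since (a) and (b) involve disjoint $t$-strata of $\eta$ they are independent, and elementary Poisson estimates give $\P(E_x)\geq\alpha(N,\ep)>0$ uniformly in $p\in(\ep,1-\ep)$.

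Because $E_x$ is $\eta_{|R_x}$-measurable and the boxes $R_x,R_y$ are disjoint whenever $\|x-y\|_\infty\geq5$, the family $(\mathbf 1_{E_x})_{x\in\Z^d}$ is $4$-dependent. A Liggett--Schonmann--Stacey-type stochastic domination for finite-range $\{0,1\}$-valued fields then extracts an i.i.d.\ $\mathrm{Ber}(\delta)^{\otimes\Z^d}$ field $\omega_\delta\leq \mathbf 1_{E_\cdot}$ for some $\delta=\delta(N,\ep)>0$, giving $\omega_\delta^N\subseteq\pazocal V_N(p+\ep)$ as required. For \eqref{eq:domsto_1} one uses the simpler forcing event $E'_x:=\{\eta\cap(R_x\times[0,p-\ep])=\emptyset\}$: on $E'_x$ no open point at level $p-\ep$ lies in $R_x$, so $d(y,\overline\eta_o^N(p-\ep))\geq N$ almost surely for every $y\in\overline{\Lu_N(Nx)}$, forcing $y\notin\pazocal V_N(p-\ep)$. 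The main technical subtlety is the extraction step when $\alpha(N,\ep)$ is not close to $1$, which is handled by first restricting to finitely many coset sub-lattices of $\Z^d$ on which $(\mathbf 1_{E_x})$ are genuinely independent and then using auxiliary independent randomization to assemble a true product Bernoulli field on all of $\Z^d$.
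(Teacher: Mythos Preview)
Your geometric claim that $E_x$ forces $\overline{\Lu_N(Nx)}\subset\pazocal V_N(p+\ep)$ is fine, but the extraction step has a genuine gap: you need the resulting $\omega_\delta$ to be \emph{independent} of $\pazocal V_N(p)$ (this is how $\omega_\delta$ is specified before the proposition, and it is exactly what the applications in Section~\ref{sec:Grim_Mars} use), yet your events $E_x$ are built from the same Poisson points that determine $\pazocal V_N(p)$. Part~(a) reads $\eta$ on the stratum $(p+\ep,1]\subset(p,1]$ that governs $\overline\eta_c(p)$, and part~(b) reads $\eta$ on $[0,\ep/2]\subset[0,p]$ that governs $\overline\eta_o(p)$. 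No domination trick --- LSS, cosets, or auxiliary thinning --- can manufacture independence from $\pazocal V_N(p)$ that was not there to begin with. Concretely, in the single-$\eta$ coupling every point of $\eta\cap(R_x\times[0,\ep/2])$ already lies in $\overline\eta_o(p)$, so whenever $\overline\eta_o(p)\cap R_x$ fails to be $(N/2)$-dense the event $E_x^{(b)}$ is \emph{deterministically} impossible; hence $\P(E_x\mid\overline\eta^N(p))$ is not bounded below, and no $\omega_\delta\le\mathbf 1_{E_\cdot}$ can be taken independent of $\pazocal V_N(p)$. (The same issue afflicts your $E'_x$ for \eqref{eq:domsto_1}.) Even as a purely marginal statement the argument does not close: since $\mathbf 1_{E_\cdot}$ is positively correlated with $\pazocal V_N(p)$, the union $\pazocal V_N(p)\cup\bigcup_{E_x}\overline{\Lu_N(Nx)}$ need not dominate $\pazocal V_N(p)\cup\omega_\delta^N$ with independent $\omega_\delta$.

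The paper repairs this with two ingredients. First, it switches to a coupling using two \emph{independent} Poisson processes $\eta^o,\eta^c$, so that the fresh open points in $\eta^o\cap(\R^d\times(p,p+\ep])$ are genuinely independent of $\overline\eta^N(p)$; this is precisely what yields the uniform conditional bound $\P[\Lu_N(Nx)\subset\pazocal V_N(p+\ep)\mid\pazocal V_N(p)]\ge\delta$ of Lemma~\ref{lem:sprink_V_N}. Second, the residual spatial dependence (the forcing event at $x$ looks at neighbouring $N$-boxes) is handled not by LSS --- which would need marginals close to~$1$ --- but by splitting the sprinkling $\ep$ into $4^d$ equal increments and, at each step, working on a single coset of $4\Z^d$ where the forcing events are conditionally independent given $\overline\eta^N(p)$; iterating over the $4^d$ cosets gives \eqref{eq:domsto_2}. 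Your coset idea is in the right direction, but it must be combined with the conditional (not marginal) lower bound and with an iterated sprinkling rather than a single one.
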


\begin{proposition}\label{prop:interGM}
  Fix $N\geq1$ and denote by $p_c(N)$ the percolation critical point of $\pazocal{V}_N$ defined as in \eqref{eq:def_pc}. Then for every $p>p_c(N)$ and $\delta>0$, there exists $c=c(p,N,\delta)>0$ such that for $R$ large enough
\begin{equation}\label{eq:disc_decay_truncated2} 
    \P[\lr{}{\pazocal{V}_N(p)\cup \omega_\delta ^N }{B_R}{\infty}]\ge 1-e^ {-cR^ {d-1}}.
\end{equation}  
\end{proposition}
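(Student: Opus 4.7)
My plan is to prove Proposition~\ref{prop:interGM} by adapting the classical Grimmett--Marstrand static renormalization \cite{GriMar90} to the truncated model $\pazocal{V}_N$, using crucially its finite-range dependence together with the flexibility provided by the Bernoulli sprinkling $\omega_\delta^N$ and the sprinkled finite-energy property stated in Proposition~\ref{prop:domsto}.

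The core step is a \emph{finite-size criterion}: for fixed $p > p_c(N)$, $\delta>0$, and every $\varepsilon>0$, find a scale $L=L(N,p,\delta,\varepsilon)$ such that the event $\mathsf{Good}(L)$ --- defined as the event that $(\pazocal{V}_N(p)\cup \omega_\delta^N)\cap \Lc_{3L}$ contains a unique cluster of diameter $\ge L$ that moreover meets each sub-box $\Lc_L(2Lx)$, $x\in \Z^d$, $\|x\|_\infty\le 1$ --- has probability at least $1-\varepsilon$. I would prove this along the classical Grimmett--Marstrand scheme: (i) use $p>p_c(N)$ together with a Burton--Keane style argument (available for $\pazocal{V}_N$ via FKG and finite range) to get an a.s.-unique infinite cluster, and then, by ergodicity, a positive density of points connected far away; (ii) run a Zhang-type square-root trick in an annulus to produce many disjoint macroscopic crossings at scale $L$ with uniformly high probability; (iii) glue those crossings into a single face-touching cluster by \emph{local surgery}, which is precisely what the Bernoulli sprinkling and Proposition~\ref{prop:domsto} make cheap --- each allows us to force whole $N$-boxes open with conditional probability bounded below, providing on demand the bridges needed to merge clusters and to reach prescribed faces.

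Once $\mathsf{Good}(L)$ is established, the finite-range dependence of $\pazocal{V}_N\cup \omega_\delta^N$ makes the translates $\big(2Lx+\mathsf{Good}(L)\big)_{x\in\Z^d}$ a $k$-dependent family for some $k=k(N,L)$, and by Liggett--Schonmann--Stacey these stochastically dominate a Bernoulli site process on $\Z^d$ whose density can be pushed arbitrarily close to $1$ by taking $\varepsilon$ small. Crucially, whenever two $\ell^\infty$-neighbours $x,y\in\Z^d$ are both good, the uniqueness clauses in their $\mathsf{Good}$ events force their large clusters to coincide: both intersect the common sub-box $\Lc_L(L(x+y))\subset \Lc_{3L}(2Lx)\cap \Lc_{3L}(2Ly)$, so they are the same cluster of $\pazocal{V}_N(p)\cup \omega_\delta^N$. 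Hence any connected component of good sites in the renormalized lattice sits inside a single ambient cluster.

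The conclusion then follows from a standard Peierls contour argument. If $B_R$ is disconnected from infinity in $\pazocal{V}_N(p)\cup \omega_\delta^N$, then by the previous two steps the component of good renormalized sites containing the origin must be finite, and therefore admits a $*$-connected outer boundary of bad sites of size $\gtrsim (R/L)^{d-1}$. Counting $*$-connected sets of size $n$ by $7^{dn}$ (as used in Lemma~\ref{lemma :GL}) and combining with the near-$1$ Bernoulli domination gives the desired bound $e^{-cR^{d-1}}$. The main obstacle is, unsurprisingly, the uniqueness clause of Step~1: the existence of a face-touching macroscopic crossing is a comparatively direct consequence of $\theta(p)>0$, but upgrading this to uniqueness requires the full Grimmett--Marstrand induction over scales, where at each scale one uses a square-root trick to force extra connections and absorbs the leftover defects by sprinkling. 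The continuum nature of Voronoi percolation further introduces rare large-cell and degenerate-tiling issues, which Lemmas~\ref{lemma:diam}, \ref{lemma:intersection boundary} and Proposition~\ref{prop:chemical_dist} are designed to handle.
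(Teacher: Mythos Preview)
Your proposal takes a genuinely different route from the paper: you attempt a \emph{static} renormalization (establish a block-good event, dominate by Bernoulli via LSS, then Peierls), whereas the paper follows Grimmett--Marstrand's original \emph{dynamic} exploration. In the paper's argument one does not first prove a local-uniqueness-type finite-size criterion. Instead, one fixes scales $m\le n$ via two seed lemmas (Lemmas~\ref{lem:seed} and \ref{lem:sprinklingseed}) and then runs an exploration process on the macroscopic lattice $\Z^d$ starting from the block of sites covering $B_R$: at each step one tries to steer the current cluster into a fresh seed inside a neighbouring $(n{+}m)$-box, using two independent sprinkling copies $\omega_t^1,\omega_t^2$ to bridge across the boundary of the explored region. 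Lemma~\ref{lem:sprinklingseed} guarantees that each step succeeds with probability at least $1-2\varepsilon>p_c^{\mathrm{site}}(\Z^d)$ \emph{conditionally on the entire past}, so by a standard comparison (Lemma~\ref{lem:explor}) the active set dominates the cluster of a block in supercritical Bernoulli site percolation, and the $e^{-cR^{d-1}}$ bound follows directly. The total sprinkling stays under control because each macroscopic box is touched at most $2\cdot 3^d$ times.

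The genuine gap in your sketch is step~(iii), which you yourself flag as the main obstacle. Forcing a single $N$-box open via $\omega_\delta^N$ or Proposition~\ref{prop:domsto} costs a bounded factor, but gluing two macroscopic crossings of $\Lc_{3L}\setminus\Lc_L$ along a path of $N$-boxes costs $\delta^{\Theta(L/N)}$ unless you already know the crossings come within $O(N)$ of one another --- and securing that is precisely the content of the seed machinery (Lemmas~\ref{lem:seed}--\ref{lem:sprinklingseed}) your outline does not invoke. Relatedly, Grimmett--Marstrand is \emph{not} an ``induction over scales'': it is a single choice of $(m,n)$ followed by the dynamic exploration, whose key feature is that the sprinkling needed at each step is uniformly bounded (one only has to open $\Lc_{4N}(x)$ for some $x$ on the boundary of the explored region, because Lemma~\ref{lem:sprinklingseed} supplies an existing $\pazocal{V}_N(p)$-path from that neighbourhood to a seed). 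Your static $\mathsf{Good}(L)$ event, \emph{if} established, would indeed yield the conclusion cleanly via LSS and Peierls; but proving its uniqueness clause is not easier than the dynamic argument --- historically the static Pisztora-type renormalization takes the dynamic Grimmett--Marstrand output as input rather than the other way around.
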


The proof of Proposition~\ref{prop:domsto} is presented in Section~\ref{subsec:domsto}. We now explain the main ingredients in the proof of Proposition~\ref{prop:interGM}. Let us introduce some useful notations.
Define for $n,m\ge 1$, the ``corners''
\[T_n\coloneqq\{x\in\partial \Lambda_n: x_1=n,\,  x_i\ge 0 \,\, \forall i\in\{2,\dots,d\}\},\]
\[T_{m,n}\coloneqq\{x+t\mathbf e_1: x\in T_n,\, t\in[0,2m]\}.\]
Consider the random set of points in $T_n$ included in a ``seed'' in $T_{m,n}$, i.e.~a translate of $\Lambda_m$ in $T_{m,n}$  which is fully open in $ \omega_\delta ^N$, namely
\[K_{m,n}( \omega_\delta ^N)\coloneqq\{x\in T_n: \exists z\in \R^ d\quad \Lambda_m(z)\subset   \omega_\delta ^N\cap T_{m,n},\, x\in \Lambda_m(z)\}.\]

The proof strategy of Proposition~\ref{prop:interGM} follows closely the original one of Grimmett \& Marstrand \cite{GriMar90}. Though due to the short range dependence and the continuous setting of the model, the proofs require nontrivial technical adaptations.
The proof is done in three main steps. The first step consists in proving that we have a good probability to connect to a seed.

\begin{lemma}\label{lem:seed} 
For every $N\ge 1$, $p>p_c(N)$ and $\varepsilon,\delta >0$, there exist $n,m\in N\N$, $n\ge m \ge 2N$ such that
    \[\P\left[\lr{\Lambda_n}{\pazocal{V}_N(p)}{\Lambda_m}{ K_{m,n}(\omega_\delta ^N)}\right]\ge 1-\varepsilon.\]
\end{lemma}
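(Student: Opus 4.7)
The plan is to combine three ingredients: the stochastic domination from Proposition~\ref{prop:domsto}, uniqueness of the infinite cluster of $\pazocal{V}_N(p')$ for an auxiliary $p'\in(p_c(N),p)$, and an independent sprinkling to nucleate the seed. Fix such a $p'$ and apply Proposition~\ref{prop:domsto} to obtain $\delta_0>0$ such that $\pazocal{V}_N(p)\succeq\pazocal{V}_N(p')\cup\omega_{\delta_0}^N$, with $\omega_{\delta_0}^N$ independent of $\eta$. Under the standard coupling $\omega_{\delta_1}\subset\omega_{\delta_2}$ for $\delta_1\leq\delta_2$, the event in the lemma is increasing in $\delta$, so I may assume $\delta\leq\delta_0$, and it suffices to prove the bound with $\pazocal{V}_N(p)$ replaced by $\pazocal{V}_N(p')\cup\omega_\delta^N$.

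I would first establish the auxiliary estimate
\[
    \P[\Lambda_m\leftrightarrow T_n\text{ in }\pazocal{V}_N(p')\cap\Lambda_n]\geq 1-\varepsilon/2
\]
for suitable $m\leq n$ in $N\N$. The model $\pazocal{V}_N(p')$ is translation invariant, $4N$-dependent, FKG, and satisfies a sprinkled finite energy property (again via Proposition~\ref{prop:domsto}). A Burton--Keane argument therefore gives almost sure uniqueness of its infinite cluster, and ergodicity then yields $\P[\Lambda_m\leftrightarrow\infty\text{ in }\pazocal{V}_N(p')]\to 1$ as $m\to\infty$. Pick $m$ so this probability is at least $1-\alpha$ and then $n\gg m$ so that $\P[\Lambda_m\leftrightarrow\partial\Lambda_n\text{ in }\pazocal{V}_N(p')\cap\Lambda_n]\geq 1-2\alpha$ by monotone convergence. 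An FKG square-root trick over the $d\cdot 2^d$ octant corners of $\partial\Lambda_n$ that are symmetric to $T_n$ under the symmetries of $N\Z^d$ (which preserve $\pazocal{V}_N$) then yields the displayed estimate for $\alpha$ small enough.

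The key step is to upgrade this connection to actually hit a seed. I would place $M\asymp(n/m)^{d-1}$ aligned, pairwise disjoint boxes $B_i=\Lambda_m(z^{(i)})\subset T_{m,n}$ so that the sections $\tau_i:=B_i\cap T_n$ cover a positive fraction of $T_n$. The seed events $S_i:=\{B_i\subset\omega_\delta^N\}$ are i.i.d.\ of probability $q_0\asymp\delta^{(2m/N)^d}$ and independent of $\pazocal{V}_N(p')$. If $S_i$ occurs and $\Lambda_m\leftrightarrow\tau_i$ in $\pazocal{V}_N(p')\cap\Lambda_n$, then $B_i$ is a connected open subset of $\omega_\delta^N$, so $\Lambda_m$ is connected inside $\Lambda_n$ to $\tau_i\subset K_{m,n}(\omega_\delta^N)$ and the target event holds. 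Writing $I:=\{i:\Lambda_m\leftrightarrow\tau_i\text{ in }\pazocal{V}_N(p')\cap\Lambda_n\}$, the same uniqueness argument shows $\P[i\in I]\geq 1-\beta$ uniformly in $i$ (each tile of diameter $\asymp m$ meets the unique infinite cluster with probability $1-o_m(1)$, and by uniqueness is connected to $\Lambda_m$ inside $\Lambda_n$ once $n$ is large). Markov then gives $\P[|I|\geq M/2]\geq 1-2\beta$, and independence of $(S_i)_i$ from $\pazocal{V}_N(p')$ yields $\P[\exists i\in I:S_i]\geq(1-2\beta)\bigl(1-(1-q_0)^{M/2}\bigr)\geq 1-\varepsilon/2$ once $Mq_0$ is large enough, which is achieved by taking $n$ large at fixed $m,\delta$. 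Combining with the connection estimate finishes the proof.

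The hard part will be the uniform bound $\P[i\in I]\geq 1-\beta$ in the last step: it relies on running a Burton--Keane argument for $\pazocal{V}_N(p')$ under only the sprinkled finite energy coming from Proposition~\ref{prop:domsto}, and on reducing infinite-cluster connections from $\Lambda_m$ to tiles $\tau_i$ scattered throughout $T_n$ into connections confined to $\Lambda_n$, which is handled by choosing $n$ much larger than the deterministic diameter of the relevant connection in the unique infinite cluster.
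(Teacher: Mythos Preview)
There is a genuine gap in the key step: the uniform bound $\P[i\in I]\ge 1-\beta$, i.e.~$\P[\lr{\Lambda_n}{\pazocal{V}_N(p')}{\Lambda_m}{\tau_i}]\ge 1-\beta$ for every tile $\tau_i\subset T_n$, cannot be obtained by the argument you sketch. Uniqueness of the infinite cluster in $\R^d$ only yields that $\Lambda_m$ and $\tau_i$ are connected \emph{somewhere} in $\R^d$; you need the connection to stay inside $\Lambda_n$. But $\tau_i$ sits on $\partial\Lambda_n$, so enlarging $n$ also moves $\tau_i$ further away---there is no fixed finite connection whose ``deterministic diameter'' you can eventually swallow. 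What you are really asking for is a half-space (or quarter-space, for tiles near corners of $T_n$) connectivity estimate of the form $\P[\lr{\Lambda_n}{\pazocal{V}_N(p')}{\Lambda_m}{\tau_i}]\to 1$ uniformly over tiles at distance $n$ on the boundary, and this is precisely the kind of statement that Grimmett--Marstrand is designed to produce. Invoking it here is circular.

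The paper avoids this circularity: instead of trying to show that each \emph{specific} tile is reached with high probability, it shows that \emph{many} (unspecified) $N$-boxes on $T_n$ are reached. This is Lemma~\ref{lem:seed1}, proved by a telescoping argument: using the stochastic domination $\pazocal{V}_N(p')\preceq\pazocal{V}_N(p)\setminus\omega^N_{\delta_0}$ (the \emph{other} direction of Proposition~\ref{prop:domsto}), one bounds the successive drops $\P[\lr{\Lambda_{kN}}{}{\Lambda_m}{\partial\Lambda_{kN}}]-\P[\lr{\Lambda_{(k+1)N}}{}{\Lambda_m}{\partial\Lambda_{(k+1)N}}]$ from below by $\E[\delta_0^{N_k}\mathbf 1_{N_k>0}]$, where $N_k$ is the number of boundary $N$-boxes reached at layer $k$. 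Since these drops sum to at most $1$, some layer must have $N_k$ large with high probability, and a square-root trick then places many of them in $T_n$. From there one spaces out the reached boxes and uses the independent seed events exactly as you propose. Your sprinkling step is fine; the missing ingredient is this ``many-connected-boxes'' lemma replacing the unjustified per-tile estimate.
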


The proof of Proposition~\ref{prop:interGM} uses a dynamic exploration process starting from the ball $B_R$ in such a way that if the exploration process never ends then $B_R$ is connected to infinity. 
In order to deal with the negative information that may arise from this exploration process (coming from the closed regions on the boundary of the exploration), we will need the following lemma to ensure that up to a sprinkling of the parameter the explored region is connected to a seed.
\begin{lemma}\label{lem:sprinklingseed}
    For every $N\ge 1$, $p>p_c(N)$ and $\varepsilon,\delta >0$, there exist $n,m\in N\N$, $n\ge m \ge 2N$ such that the following holds. 
    For $\Lambda_m\subset \Re\subset \R^ d$ and $x\in N\Z^d$,
    denote 
    $$G_\Re(x)\coloneqq\{\Lc_{4N}(x)\subset \omega_\delta ^ N \}\cap \big( \{\lr{\Lambda_{n}\setminus (\Re+ \Lc_{2N})}{\pazocal{V}_N(p)}{\Lc_{4N}(x)}{ K_{m,n}(\omega_\delta ^ N)}\}\cup \{\Lc_{4N}(x)\cap K_{m,n}(\omega_\delta ^ N)\ne \emptyset \} \big).$$
    Then, for every $\Lambda_m\subset \Re\subset \R^ d$, we have
    \begin{equation*}
        \P\Big[ \bigcup_{x\in N\Z^d\cap (\Re+\Lc_N)}\, G_\Re(x) \Big]\ge 1-\ep.
    \end{equation*}
\end{lemma}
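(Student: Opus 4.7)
I will upgrade Lemma~\ref{lem:seed} to a version where the source of the connection is a box $\Lc_{4N}(x)$ prescribed to sit near $\partial\Re$ and to be fully $\omega_\delta^N$-open, with the path in $\pazocal{V}_N(p)$ constrained to avoid $\Re+\Lc_{2N}$. The key inputs are: (a) the sprinkled finite energy, namely that $\{\Lc_{4N}(x)\subset\omega_\delta^N\}$ is independent of $\eta$ with a constant positive probability $\delta_0>0$; (b) the FKG inequality for $\omega_\delta^N$ (a product Bernoulli measure) and for $\pazocal{V}_N(p)$; and (c) the invariance of the joint law of $(\pazocal{V}_N(p),\omega_\delta^N)$ under the axis-aligned isometries of $N\Z^d$.

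\textbf{Construction.} First, fix $\varepsilon'>0$ small (to be tuned depending on $\varepsilon$ and $\delta_0$) and apply Lemma~\ref{lem:seed} to obtain $n_0\ge m_0\ge 2N$ for which the seed-reaching event holds with probability at least $1-\varepsilon'$. By the symmetries above, the analogous estimate holds with the source box replaced by any $N\Z^d$-translate of $\Lambda_{m_0}$ and the corner direction replaced by any of the $2d$ axis directions; denote these translated/rotated events by $E^{(x,\sigma)}$ for $x\in N\Z^d$ and $\sigma\in\{\pm\mathbf e_i\}_{i=1}^d$. Take $n,m$ slightly larger than $n_0,m_0$ (by $O(N)$) to provide geometric slack. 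Given $\Re\supset\Lambda_m$, I then select a family of candidate pairs $(x_1,\sigma_1),\ldots,(x_K,\sigma_K)$, with $x_i\in N\Z^d\cap(\Re+\Lc_N)$ and $\sigma_i$ pointing outward from $\Re$ at $x_i$, such that either the reference region of $E^{(x_i,\sigma_i)}$ lies in $\Lambda_n\setminus(\Re+\Lc_{2N})$ with target seeds inside $K_{m,n}(\omega_\delta^N)$, or $\Lc_{4N}(x_i)$ already intersects $K_{m,n}(\omega_\delta^N)$ (the second disjunct in $G_\Re(x)$). In either case, combined with $F^{(x_i)}\coloneqq\{\Lc_{4N}(x_i)\subset\omega_\delta^N\}$, one obtains $G_\Re(x_i)$, and FKG (both $E^{(x_i,\sigma_i)}$ and $F^{(x_i)}$ are increasing in $\omega_\delta^N$ while $E$ is also increasing in $\pazocal{V}_N(p)$) gives
\[ \P[G_\Re(x_i)]\ge \delta_0(1-\varepsilon'). \]
Arrange further that the reference regions of the $E^{(x_i,\sigma_i)}$ together with the boxes $\Lc_{4N}(x_i)$ are pairwise at $\ell^\infty$-distance at least $4N$; then the events $G_\Re(x_i)$ depend on disjoint restrictions of $\eta$ and $\omega_\delta$ and are hence independent. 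Consequently,
\[ \P\Bigl[\bigcap_{i=1}^K G_\Re(x_i)^{\mathrm{c}}\Bigr]\le \bigl(1-\delta_0(1-\varepsilon')\bigr)^K, \]
which is $\le\varepsilon$ once $K=K(\varepsilon',\delta_0)$ is taken sufficiently large---possible provided $n$ is chosen large enough at the outset.

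\textbf{Main obstacle.} The technical heart of the argument is the geometric selection of the candidate family, uniformly over the (possibly wild) shape of $\Re$. One must exhibit, for every $\Re\supset\Lambda_m$, $K$ disjoint pairs $(x_i,\sigma_i)$ whose associated region points outward from $\Re$ into $\Lambda_n\setminus(\Re+\Lc_{2N})$ with target inside $K_{m,n}$, or else such that $\Lc_{4N}(x_i)$ directly intersects $K_{m,n}$ when $\Re$ extends close to $T_n$. A natural approach is to consider the outer $N$-layer $\{x\in N\Z^d\cap(\Re+\Lc_N):\Lc_{4N}(x)\not\subset\Re\}$ and to assign each $x$ an outward direction $\sigma_x$ via a local combinatorial rule (e.g., the axis direction minimizing the immediate overlap with $\Re$). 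The condition $\Re\supset\Lambda_m$ guarantees that this layer has size at least of order $m^{d-1}$, which leaves ample room to extract the required $K$-family once $n$ is sufficiently larger than $m$; a case analysis according to whether $\Re$ reaches near $T_n$ (where the second disjunct in $G_\Re(x)$ takes over) handles the situation in which the outward tube overlaps $\Re+\Lc_{2N}$.
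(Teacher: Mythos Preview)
Your approach has a genuine gap at the step you yourself flag as the ``main obstacle''. The translated event $E^{(x,\sigma)}$ you invoke is, by construction, a connection from a box near $x$ to a seed in the \emph{translated} corner $x+T_{m_0,n_0}$ of the translated box $\Lc_{n_0}(x)$. For this to witness $G_\Re(x)$ you need the target seed to lie in the \emph{fixed} corner $K_{m,n}(\omega_\delta^N)$ of the \emph{fixed} box $\Lc_n$, and you need the whole reference region to avoid $\Re+\Lc_{2N}$. These two constraints pin down $x$ to an essentially unique position (up to $O(N)$), and for a generic $\Re$ no such $x$ lies on the boundary layer $N\Z^d\cap(\Re+\Lc_N)$. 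Concretely, take $\Re$ to be $\R^d$ minus a thin tube of width $10N$ running from just outside $\Lc_m$ to the corner $T_{m,n}$: then $\Lc_n\setminus(\Re+\Lc_{2N})$ is essentially a one-dimensional corridor, no axis-aligned translate of a seed event of scale $n_0$ fits inside it, and the only hope is the second disjunct $\Lc_{4N}(x)\cap K_{m,n}(\omega_\delta^N)\neq\emptyset$, which involves only $O(1)$ boxes and so cannot furnish $K$ independent trials. The size $\gtrsim m^{d-1}$ of the boundary layer is irrelevant, because almost none of those boxes admit an outward tube that both avoids $\Re+\Lc_{2N}$ and terminates in the fixed $K_{m,n}$.

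The paper sidesteps this geometric obstruction entirely. It never tries to build the outward connection by hand. Instead it applies Lemma~\ref{lem:seed} at a slightly smaller parameter $p'<p$ (so still at the origin, with the correct target $K_{m,n}$), uses the stochastic domination $\pazocal{V}_N(p')\preceq\pazocal{V}_N(p)\setminus\omega^0$ from Proposition~\ref{prop:domsto}, and observes that any path in $\pazocal{V}_N(p)\setminus\omega^0$ from $\Lc_m\subset\Re$ to $K_{m,n}$ must exit $\Re+\Lc_{2N}$ somewhere; the piece after the last exit automatically lands in the set $V$ of good boundary boxes. The point is then a counting trick: if $|V|$ were small, one could with probability $\geq\delta_0^{C|V|}$ close all of $\tilde V$ in $\omega^0$ and kill every such path, contradicting Lemma~\ref{lem:seed}. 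This forces $|V|$ to be large with high probability, uniformly in $\Re$. Finally an independent second layer $\omega^2$ of the sprinkling is used to open $\Lc_{4N}(x)$ for some $x\in V$. The role of $\Re$ is thus purely topological (a path must cross its boundary), which is why no geometric selection is needed.
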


\begin{remark}
    In Lemmas~\ref{lem:seed} and \ref{lem:sprinklingseed}, we require $m$ and $n$ to be multiples of $N$ in order to use the invariance of $\pazocal{V}_N(p)$ with respect to the symmetries of $N\Z^d$. In particular, this implies that the statements still hold if the corner $T_n$ (and $T_{m,n}$) is replaced by any of the $d2^d$ possible corners of $\Lc_m$.
\end{remark}

\begin{figure}[!h]
    \centering
    \includegraphics[width=0.5\textwidth]{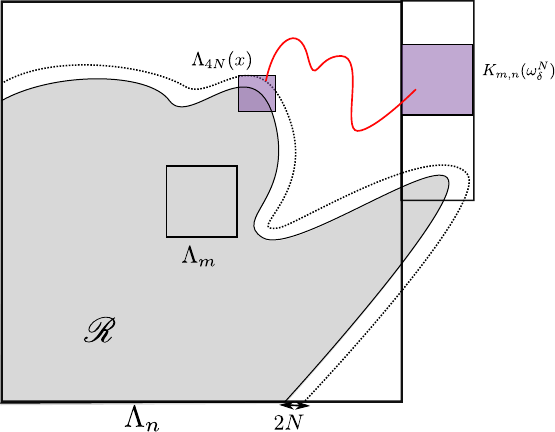}
    \caption{\label{sprinklingseed} The event $G_\Re(x)$. They grey region represents $\Re$, while the dotted region represents the boundary of $\Re+\Lc_{2N}$. The connection in red is in $\mathcal{V}_N(p)\cap (\Lc_n\setminus (\Re+\Lc_{2N}))$. The purple regions are open in $\omega_\delta^ N$.}
\end{figure}

The last step is to prove that this exploration process stochastically dominates the exploration process of supercritical Bernoulli site percolation. Here, the sites correspond to boxes of side-length $2(n+m)$. We will say that a neighbouring site of the explored region is occupied if after sprinkling the explored region is connected to a seed inside this box. In this step, we iterate the construction of connecting to a seed. Thanks to Lemma~\ref{lem:sprinklingseed}, each step has good probability of occurring. We conclude that it stochastically dominates the exploration process of supercritical Bernoulli site percolation.

We postpone the proof of Propositions~\ref{prop:domsto} and Lemmas~\ref{lem:seed} and \ref{lem:sprinklingseed} to the next subsections.

\begin{proof}[Proof of Proposition~\ref{prop:interGM}]
Fix $p>p_c(N)$, $\delta>0$ and $R$ large enough. 
Take $\varepsilon>0$ such that
$1-2\varepsilon> p_c^{site}(\Z^d)$ and let $n\geq m\geq 1$ be as in Lemma~\ref{lem:sprinklingseed}. 
Let $\omega_t^1, \omega_t^2$, for integers $t\ge1$, be an i.i.d.~family of random variables distributed as $\omega_\delta ^N$ and independent of $\eta$. 
We will run an exploration process where we progressively reveal $\overline \eta ^ N(p)$ and $\omega^i_t$ on boxes of the form $\Lu_N(x)$ for $x\in N\Z^ d$.
Set $n'\coloneqq n+m$. For each site $x\in\Z^d$, we identify it to the box $\Lambda_{n'}(2n'x)$. 
Set 
\[A_0\coloneqq \{x\in \Z^d : \Lambda_{n'}(2n'x)\subset B_R\},\quad E_0\coloneqq\emptyset,\quad \pazocal V_0\coloneqq \pazocal V_N(p).\]
We will inductively construct a process $(A_t,E_t,\pazocal V_t)_{t\geq0}$. For every $t\ge 0$, we denote by $\pazocal C_t$ the cluster of $B_R$ in $\pazocal V_t$ (more precisely, this means the union of the clusters in $\pazocal V_t$ intersecting $B_R$). The construction will be done in such a way that for every $x\in A_t$, there exists a point $w$ such that $\Lambda_m(w)\subset \Lambda_{n'}(2n'x)\cap \pazocal C_t$. In other words, in each box corresponding to an active site the cluster is connected to a seed. Moreover, the construction will ensure that $\pazocal V_t\subset \pazocal V_{t+1}$ and so $\pazocal C_t\subset \pazocal C_{t+1}$.

Let $t\ge 0$. We build $A_{t+1},E_{t+1},\pazocal V_{t+1}$  from $A_t,E_t,\pazocal V_t$ as follows.
If there is a vertex in $(A_t\cup E_t)^{\mathrm{c}}$ neighboring $A_t$, then we stop the exploration and set $(A_{t+1},E_{t+1},\pazocal V_{t+1})=(A_t,E_t,\pazocal V_t)$. 
Otherwise, let us denote such a vertex in $(A_t\cup E_t)^{\mathrm{c}}$ by $x_{t}$ (if there are several such vertices, we choose the earliest one according to a deterministic ordering).
We set 
\[\pazocal V_{t+1}=\pazocal V_t\cup  \big((\omega_t^1\cup\omega_t^2)\cap \Lambda_{3n'}(2n'x_t)\big).\] 
We say that the step is successful if there exists a point $v$ such that $\Lambda_m(v)\subset \Lambda_{n'}(2n'x_t)\cap \pazocal C_{t+1}$.
If this step is successful, then we set 
\[A_{t+1}=A_t\cup\{x_t\},\quad E_{t+1}=E_t.\]
Otherwise, we set 
\[A_{t+1}=A_t,\quad E_{t+1}=E_t\cup \{x_t\}.\]

\begin{figure}[!h]
    \centering
    \includegraphics[width=0.65\textwidth]{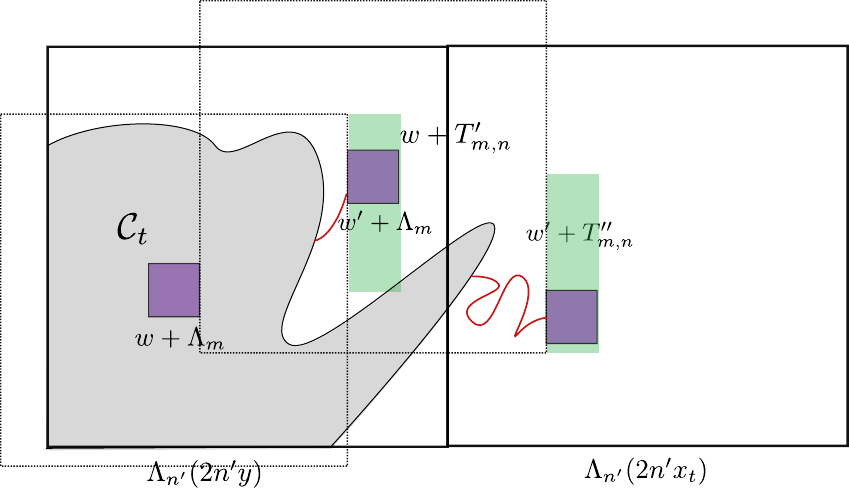}
    \caption{\label{steering} The steering procedure. Connections in $\pazocal{V}_N(p)$ are represented in red. Purple boxes are seeds, i.e.~open boxes in some $\omega_t^i$. The green regions are the two target corners.}
\end{figure}

We claim that, conditionally on the whole history of the process, each step is successful with probability at least $1-2\ep $, namely: 
\[\P[A_{t+1}=A_t\cup\{x_t\} \,|\, (A_s,E_s,\pazocal{C}_s)_{0\leq s\leq t}]\ge 1-2\varepsilon \quad \text{a.s..}\]
Let $y\in A_t$ be a neighbor of $x_t$.
By construction of the exploration, there exists a point $w$ such that $\Lambda_m(w)\subset \Lambda_{n'}(2n'y)\cap \pazocal C_t$. 
Let us now explain how by a steering procedure, we can connect $\pazocal{C}_t$ to a seed in $\Lambda_{n'}(2n'x_t)$ with high probability after a two-steps sprinkling -- see Figure~\ref{steering}. It is easy to check that on the face corresponding to the direction $x_t-y$, there exists a corner $T'_{m,n}$ (that is one of the $2^{d-1}$ regions symmetric to $T_{m,n}$ on that face) such that $w+T'_{m,n}\subset \Lambda_{n'}(2n'x_t)\cup \Lambda_{n'}(2n'y)$. 
We will now use Lemma~\ref{lem:sprinklingseed} to connect $w$ to a seed in $(w+T'_{m,n})\cap \omega_t^1$. We define $\Re$ to be the union of the set of $N$-boxes at distance less than $N$ from $\pazocal C_t$. In particular, in the exploration of $\pazocal{C}_t$, we have not revealed anything outside $\Re$. Note that the event $G_{\Re}(x)$ defined in Lemma~\ref{lem:sprinklingseed}, with $\omega^N_\delta$ replaced by $\omega^1_t$, is independent of the configuration inside $\Re$. Moreover, we have for any $x\in \Re $, $\Lambda_{4N}(x) \cap \pazocal C_t\ne \emptyset$. Thanks to Lemma~\ref{lem:sprinklingseed}, the probability  that $\pazocal{C}_t$ is connected to a seed $\Lc_m(w')\subset (w+T'_{m,n})\cap \omega_t^1$ in $\pazocal{V}_t\cup (\omega_t^ 1\cap \Lambda_{3n'}(2n'x_t))$ is at least $1-\ep$. Conditionally on the existence of such a $w'$, we can now repeat the same procedure centered at $w'$ by choosing an appropriate corner of $T''_{m,n}$ such that $w'+T''_{m,n}\subset \Lc_{n'}(2n'x_t)$. By another application of Lemma~\ref{lem:sprinklingseed} as before, with $\pazocal{C}_t$ replaced by the cluster of $B_R$ in $\pazocal V_t \cup (\omega_t^ 1\cap \Lambda_{3n'}(2n'x_t))$, we can ensure that the seed centered at $w'$ is connected in $\pazocal{V}_t\cup ((\omega_t^ 1\cup \omega_t^2)\cap \Lambda_{3n'}(2n'x_t))$ to some seed $\Lambda_m(w'') \subset \Lambda_{n'}(2n'x_t)\cap \omega^2_t$.
We conclude that the step is successful with probability at least $(1-\ep)^ 2\ge 1-2\ep>p_c^ {site}$.

Finally, it is clear that each $n'$-box is sprinkled at most $2.3^d$ times. Hence we have the stochastic domination 
\[\pazocal V_\infty\preceq \pazocal V_N(p)\cup \omega_{\delta'}^N,\]
where $\pazocal{V}_\infty\coloneqq \cup_{t\ge 1}\pazocal V_t$ and $\omega_{\delta'}^N$ is independent of $\pazocal{V}_N(p)$, with $\delta'=2.3^d\delta$. 
The conclusion of the theorem follows from the lemma below, which is an easy adaptation of \cite[Lemma 1]{GriMar90}.
\begin{lemma}\label{lem:explor}Let $X_t = (A_t, E_t)$ be a random exploration sequence of $\Z^d$. Assume that there exists $q\in[0,1]$ such that for every $t\ge 0$
    \[\P[A_{t+1}=A_t\cup\{x_t\}|X_0,\dots,X_t]\ge q\quad \text{a.s.,}\]
where $x_t$ is the earliest vertex in $(A_t \cup E_t)^{\mathrm{c}}$ neighboring $A_t$. Then $A_\infty\coloneqq \cup_{t\ge 0}A_t$ stochastically dominates the cluster of $A_0$ in Bernoulli site percolation of parameter $q$ on $\Z^d$. In particular, for $q>p_c^{site}(\Z^d)$ there exists $c = c(q) >0$ such that if $\Lc_k\subset A_0$ then
\[\P(\lr{}{A_\infty}{\Lc_k}{\infty})\ge 1-e^ {-ck^{d-1}}.\]
\end{lemma}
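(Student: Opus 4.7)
The plan is to couple the exploration $(X_t)_{t\ge 0}$ with an i.i.d.~Bernoulli($q$) site field $\xi$ on $\Z^d$ in such a way that the cluster of $A_0$ in $\xi$ is almost surely contained in $A_\infty$. To this end, enlarge the probability space by adjoining an i.i.d.~sequence $(U_t)_{t\ge 1}$ of uniform $[0,1]$ random variables independent of everything else. The hypothesis $\P[A_{t+1}=A_t\cup\{x_t\} \,|\, X_0,\dots,X_t]\ge q$ allows a standard quantile coupling: at each step $t$ one can realize the exploration so that the event $\{A_{t+1}=A_t\cup\{x_t\}\}$ contains $\{U_t\le q\}$ almost surely. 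Now declare $\xi_x\coloneqq 1$ for $x\in A_0$, $\xi_{x_t}\coloneqq \mathbf{1}_{\{U_t\le q\}}$ along the exploration, and use fresh independent Bernoulli($q$) randomness to define $\xi_x$ for sites never visited. The crucial observation is that the next vertex $x_t$ is a deterministic function of $(A_t,E_t)$, hence is $\sigma(X_0,\dots,X_t)$-measurable and in particular independent of $U_t$; consequently, $\xi$ is genuinely an i.i.d.~Bernoulli($q$) field and each site is revealed at most once.

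By construction, whenever $\xi_{x_t}=1$ we have $x_t\in A_{t+1}$. Running in parallel the canonical exploration of the cluster of $A_0$ in $\xi$ using the very same deterministic next-vertex rule produces a cluster $\mathcal{C}_\xi(A_0)\subset A_\infty$ almost surely. This gives the claimed stochastic domination of the cluster of $A_0$ in Bernoulli($q$) site percolation by $A_\infty$.

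For the quantitative statement, fix $q>p_c^{\mathrm{site}}(\Z^d)$. Bernoulli site percolation on $\Z^d$ is a finite-range i.i.d.~model for which the classical Grimmett--Marstrand theorem applies; more specifically, the same renormalization argument performed at the end of Section~\ref{sec:loc_uniq} goes through essentially verbatim (in fact in a simpler form, since there are no random cells to deal with) and yields a constant $c=c(q)>0$ such that
\begin{equation*}
\P[\Lc_k \nleftrightarrow \infty \text{ in } \xi]\le e^{-ck^{d-1}} \qquad \forall k\ge 1.
\end{equation*}
When $\Lc_k\subset A_0$, the inclusion $\mathcal{C}_\xi(A_0)\subset A_\infty$ implies the event inclusion $\{\lr{}{\xi}{\Lc_k}{\infty}\}\subset \{\lr{}{A_\infty}{\Lc_k}{\infty}\}$, which gives the stated bound.

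The only real subtlety is the bookkeeping in the coupling: one must ensure that $x_t$ depends only on the deterministic pair $(A_t,E_t)$ and not on any auxiliary randomness, so that declaring $\xi_{x_t}$ through $U_t$ at step $t$ is compatible with producing an honest i.i.d.~Bernoulli field. Beyond this, the argument reduces to an off-the-shelf application of supercritical sharpness for Bernoulli site percolation.
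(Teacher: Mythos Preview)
Your argument is essentially the standard one the paper defers to (it does not give its own proof, simply citing \cite[Lemma~1]{GriMar90}), and the coupling via auxiliary uniforms is set up correctly. One point deserves tightening: the phrase ``running in parallel the canonical exploration of the cluster of $A_0$ in $\xi$ using the very same deterministic next-vertex rule'' is misleading, because once the two explorations disagree at some step (which happens whenever $U_t\in(q,p_t]$ so that $x_t\in A_{t+1}$ but $\xi_{x_t}=0$), they will in general visit different sites in different orders thereafter, and there is no inductive inclusion $A_t^\xi\subset A_t$. The clean way to conclude $\mathcal{C}_\xi(A_0)\subset A_\infty$ is instead to note that, by your coupling, every site in $E_\infty$ satisfies $\xi=0$; since the original exploration eventually tests every vertex adjacent to $A_\infty$, any neighbour of $A_\infty$ not in $A_\infty$ lies in $E_\infty$ and is therefore $\xi$-closed, so no $\xi$-open path from $A_0$ can exit $A_\infty$. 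With this adjustment the first part is complete. For the quantitative bound, your appeal to the classical supercritical theory of Bernoulli site percolation is fine and not circular, since Grimmett--Marstrand and the ensuing surface-order disconnection estimate for Bernoulli percolation are established independently of anything in this paper.
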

\end{proof}

\subsection{Proof of Lemma~\ref{lem:seed}}

We split the proof of Lemma~\ref{lem:seed} into two steps. In the first step, we prove that there exists $n\ge m$ such that $\Lambda_m$ is connected to a lot of different points close to the boundary of $\Lambda_n$. Second, we prove that by a sprinkling we can open a seed in $T_{m,n}$ that is connected to $\Lambda_m$. The following lemma corresponds to the first step.
\begin{lemma}\label{lem:seed1} For every $N\ge 1$, $p>p_c(N)$ and $\varepsilon>0$, there exists $m\in N\N$ such that the following holds. For any $\kappa\ge 1$, there exists $n\in N\N$, $n\ge m$, such that
    \[\P\left[ |\{x\in N\Z^d:\, \Lu_N(x)\cap T_n\ne \emptyset,\, \lr{\Lambda_n}{\pazocal{V}_N(p)}{\Lambda_m}{\Lu_N(x)}\}| \ge \kappa\right]\ge 1-\varepsilon.\]
\end{lemma}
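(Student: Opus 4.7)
The plan is to follow the first step of the classical Grimmett--Marstrand argument \cite{GriMar90}: combine ergodicity, the $N\Z^d$-symmetries of $\pazocal{V}_N(p)$, and the FKG inequality (via the square-root trick) to guarantee a single connection from $\Lambda_m$ to the corner $T_n$ within $\Lambda_n$ with probability close to $1$, and then upgrade to $\kappa$ contacts using uniqueness and density of the infinite cluster.

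First, I will show that for $p > p_c(N)$, $\pazocal{V}_N(p)$ percolates almost surely -- this follows from a 0--1 law, using that $\pazocal{V}_N(p)$ is invariant under $N\Z^d$-translations and has finite range of dependence. Hence $\P[\Lambda_m \leftrightarrow \infty]$ increases to $1$ as $m \to \infty$, and since any infinite path from $\Lambda_m$ must exit $\Lambda_n$ through $\partial \Lambda_n$, I obtain $\P[\lr{\Lambda_n}{\pazocal{V}_N(p)}{\Lambda_m}{\partial \Lambda_n}] \geq 1 - \varepsilon_0$ uniformly in $n \geq m$, provided $m \in N\N$ is chosen large enough depending on $\varepsilon_0$.

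Second, I will restrict the connection to $T_n$. The boundary $\partial \Lambda_n$ is a disjoint union of $K = d \cdot 2^{d-1}$ translates $T_n^{(1)},\dots,T_n^{(K)}$ of $T_n$ by symmetries of $N\Z^d$, all of which preserve the law of $\pazocal{V}_N(p)$. Applying FKG to the $K$ decreasing complementary events (the square-root trick) gives
\begin{equation*}
\P[\nlr{\Lambda_n}{\pazocal{V}_N(p)}{\Lambda_m}{T_n}]^K \leq \P\Big[\bigcap_{i=1}^K \nlr{\Lambda_n}{\pazocal{V}_N(p)}{\Lambda_m}{T_n^{(i)}}\Big] \leq \varepsilon_0,
\end{equation*}
so $\P[\lr{\Lambda_n}{\pazocal{V}_N(p)}{\Lambda_m}{T_n}] \geq 1 - \varepsilon_0^{1/K}$, which can be made arbitrarily close to $1$ by choice of $\varepsilon_0$.

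Finally, I will boost from a single contact to $\kappa$ contacts. The plan is to rely on uniqueness of the infinite cluster $\mathcal{C}_\infty$ of $\pazocal{V}_N(p)$, which follows from a Burton--Keane argument (the required ingredients -- finite range of dependence, $N\Z^d$-translation invariance, and sprinkled finite energy -- are supplied by Proposition~\ref{prop:domsto}). By ergodicity, $\mathcal{C}_\infty$ has positive density, so for $n$ large the number of $N$-cells in $T_n \cap \mathcal{C}_\infty$ exceeds $2\kappa$ with probability $\geq 1 - \varepsilon/4$, and similarly $\P[\Lambda_m \cap \mathcal{C}_\infty \neq \emptyset] \geq 1 - \varepsilon/4$ for $m$ large. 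The hard part will be the localization of the connections: uniqueness gives connectedness in $\R^d$, while the lemma requires paths inside $\Lambda_n$. I expect to resolve this through a renormalization/coarse-graining argument leveraging Proposition~\ref{prop:domsto}, showing that with very high probability the cluster of $\Lambda_m$ in $\pazocal{V}_N(p) \cap \Lambda_n$ contains all but a sublinear (in $n^{d-1}$) fraction of the cells of $T_n \cap \mathcal{C}_\infty$, leaving at least $\kappa$ contacts inside $\Lambda_n$ for $n$ chosen large enough.
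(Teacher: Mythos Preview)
Your first two steps (choosing $m$ so that $\Lambda_m$ meets the infinite cluster, then using the square-root trick over the $d2^d$ corners of $\partial\Lambda_n$ --- note the count is $d2^d$, not $d2^{d-1}$, since $T_n$ is one orthant of one face) are essentially the same as the paper's. The genuine gap is in your third step. You acknowledge that the ``hard part'' is localizing the connections inside $\Lambda_n$, and propose to handle it by a renormalization/coarse-graining argument showing that the cluster of $\Lambda_m$ in $\pazocal{V}_N(p)\cap\Lambda_n$ captures all but $o(n^{d-1})$ of the cells in $T_n\cap\mathcal{C}_\infty$. But a statement of that strength is essentially a form of local uniqueness for $\pazocal{V}_N(p)$, and Lemma~\ref{lem:seed1} is precisely the first step of the Grimmett--Marstrand machinery whose \emph{output} is local uniqueness (Theorem~\ref{thm:disc_decay_truncated} and what follows). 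So the plan is circular as stated: you are invoking supercritical structure that the lemma is meant to help establish, and you have given no indication of an independent route to it.

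The paper avoids this entirely by the classical sprinkling/telescoping trick. One fixes $p'>p_c(N)$ with $p'<p$, uses Proposition~\ref{prop:domsto} to write $\pazocal{V}_N(p')\preceq\pazocal{V}_N(p)\setminus\omega_\delta^N$, and observes that the probabilities $\P[\lr{\Lambda_{kN}}{\pazocal{V}_N(p)\setminus\omega_\delta^N}{\Lambda_m}{\partial\Lambda_{kN}}]$ are decreasing in $k$ and bounded below by $1-\varepsilon_0$. Writing the total drop as a telescoping sum over $k\in\{m,\dots,K-1\}$, each increment is bounded below by $\E[\delta^{N_k}\mathbf{1}_{N_k>0}]$, where $N_k$ is the number of $N$-boxes in the $k$-th annulus connected to $\Lambda_m$ inside $\Lambda_{kN}$ (because if \emph{all} such boxes are blocked by $\omega_\delta^N$, the connection dies in that annulus). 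Pigeonhole then yields some $L$ with $\E[\delta^{N_L}\mathbf{1}_{N_L>0}]\leq 2/K$, forcing $N_L\geq c\log K$ with high probability; the square-root trick over corners finishes. This argument needs only that $\Lambda_m$ is connected to infinity with high probability --- no uniqueness, no density, no renormalization --- which is why it works as the \emph{input} to the rest of Section~\ref{sec:Grim_Mars}.
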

\begin{proof}[Proof of Lemma \ref{lem:seed1}] Fix $N\ge 1$, $p>p'>p_c(N)$ and let $\delta$ be as in Proposition \ref{prop:domsto} such that
\begin{equation}\label{eq:domsto_lem:seed1}
    \pazocal{V}_N(p') \preceq \pazocal{V}_N(p)\setminus \omega^N_\delta.
\end{equation}
Given a $\ep_0>0$ to be chosen later, let $m\in N\N$ be large enough such that 
 \[\P[\lr{}{\pazocal{V}_N(p')}{\Lambda_m}{\infty}]\ge 1-\ep_0.\]
 Let $\kappa \ge 1$.
Denote by 
\[\mathcal A_k\coloneqq \{x\in \Z^ d: \mathring \Lu_N(Nx)\subset  \Lambda_{kN}\setminus\Lambda_{(k-1)N} \}\]
where $\mathring A $ denotes the interior of the set $A$.
We start by proving that for $K$ large enough depending on $\delta,m,\varepsilon$, there are many vertices in $\bigcup_{m\le k\le K}\cA_k$ such that the corresponding box is connected to $\Lambda_m$ in $\pazocal V_N(p)$.
Note that for any $K\ge 2m$
\begin{equation}\label{eq:sum telescopic}
    1\ge  1- \P[\lr{\Lambda_{KN}}{\pazocal{V}_N(p)\setminus \omega^N_\delta} {\Lambda_m}{\partial \Lambda_{KN}}]\ge \sum_{k=m}^{K-1}\P[\lr{\Lambda_{kN}}{\pazocal{V}_N(p)\setminus \omega^N_\delta} {\Lambda_m}{\partial \Lambda_{kN}}]-\P[{\lr{\Lambda_{(k+1)N}}{\pazocal{V}_N(p)\setminus \omega^N_\delta} {\Lambda_m}{\partial \Lambda_{(k+1)N}}}].
\end{equation}
Denote by $V_k$ the set of vertices in the annulus $ \cA_{k+1}$ such that the corresponding box is connected to $\Lambda_m$ in $(\pazocal{V}_N(p)\setminus \omega^N_\delta)\cap \Lambda_{kN} $, that is
\[V_k\coloneqq \left\{x\in\cA_{k+1}:\lr{\Lambda_{kN}}{\pazocal{V}_N(p)\setminus \omega^N_\delta}{\Lambda_m}{\Lu_N(Nx)}\right\}.\]
Note that the set $V_k$ does not depend on the values of $\omega_\delta$ on $\cA_{k+1}$.
It is easy to check that
\[\{V_k\neq \emptyset\} \cap \{(\omega_\delta)_x=1 ~~ \forall x\in V_k\}\subset \{\lr{\Lambda_{(k+1)N}}{\pazocal V_N(p)\setminus\omega^N_\delta} {\Lambda_m}{\partial \Lambda_{kN}}\} \setminus \{\lr{\Lambda_{(k+1)N}}{\pazocal V_N(p)\setminus\omega^N_\delta} {\Lambda_m}{\partial \Lambda_{(k+1)N}}\}.\] 
Set $N_k\coloneqq|V_k|$.
It follows that
\[\P[\lr{\Lambda_{kN}}{\pazocal{V}_N(p)\setminus \omega^N_\delta} {\Lambda_m}{\partial \Lambda_{kN}}]-\P[{\lr{\Lambda_{(k+1)N}}{\pazocal{V}_N(p)\setminus \omega^N_\delta} {\Lambda_m}{\partial \Lambda_{(k+1)N}}}]\ge \P[ V_{k}\ne \emptyset, \forall x\in V_{k}\quad (\omega_\delta)_x=1]\ge  \E[ \delta ^{N_k}\mathbf{1}_{N_k>0}].\]
Combining the previous inequality together with inequality \eqref{eq:sum telescopic} yields
\[\sum_{k=m}^{K-1}\E\left [\delta ^{N_k}\mathbf{1}_{N_k>0}\right]\le 1.\]
In particular, there exists $L\in\{m,\dots,K-1\}$ such that
\[\E\left [\delta ^ {N_L}\mathbf{1}_{N_L>0}\right]\le \frac 2 K.\]
Set $c_0\coloneqq\frac 1{ 2|\log \delta |}$.
It yields
\begin{equation*}
   \P[0<N_L\le c_0\log K] \delta^ {-c_0\log K}\le \E\left [\delta ^ {N_L}\mathbf{1}_{N_L>0}\right]\le \frac 2 K
\end{equation*}
and
\[\P[0<N_L\le c_0\log K]\le \frac 2 {\sqrt K}.\]
Moreover, by stochastic domination \eqref{eq:domsto_lem:seed1} we have
\[ \P[N_L=0]=1-\P\left[\lr{\Lambda_{LN}}{\pazocal{V}_N(p)\setminus \omega^N_\delta}{\Lambda_m}{\partial \Lambda_{LN}}\right]\le 1- \P[\lr{}{\pazocal{V}_N(p')}{\Lambda_m}{\infty}]\le \ep_0 .\]
Finally, we get for $K$ large enough
\begin{equation*}
     \P[N_L> c_0\log K]\ge 1-  \P[0<N_L\le c_0 \log K]- \P[N_L=0]\ge 1- \frac 2{\sqrt K}-\ep_0\ge 1-2\ep_0.
\end{equation*}
Set $n\coloneqq N(L+1)$ and
\[\cE\coloneqq \left\{|\{x\in N\Z^d:\, \Lu_N(x)\cap T_n\ne \emptyset,\, \lr{\Lambda_n}{\pazocal{V}_N(p)}{\Lambda_m}{\Lu_N(x)}\}| \ge \kappa\right\}.\]
Using the square root trick applied to the $d2^d$ symmetric copies of $T_n$, we obtain 
\[ \P[N_L> d2^d \kappa]\le 1-\P[\cE^{\mathrm{c}}]^ {d2^ d}.\]
By choosing $K$ large enough such that $c_0\log K>d2^d \kappa$, it yields
\[\P[\cE^{\mathrm{c}}]\le (2\ep_0)^ {\frac 1 {d2^ d}} .\]
The result follows by choosing $\ep_0=\ep^{d2^d}/2$.
\end{proof}

\begin{proof}[Proof of Lemma~\ref{lem:seed}]  Let $m\ge 2N$. Let $\kappa\ge 1$ to be chosen later in the proof. Let $n\ge m$ be as in the statement of Lemma \ref{lem:seed1}. We have
  \begin{equation}\label{eq: cont ce}
   \P\left[ |V| \ge \kappa\right]\ge 1-\varepsilon,
 \end{equation}
where 
\[V\coloneqq \left\{x\in N\Z^d:\, \Lu_N(x)\cap T_n\ne \emptyset,\, \lr{\Lambda_n}{\pazocal V_N(p)}{\Lambda_m}{\Lu_N(x)}\right\}.\]
Let us now prove that up to a sprinkling, we can connect to a point in $K_{m,n}(\omega_\delta ^N)$. To do so, we need to fully open a seed in $T_{m,n}$ close to a box $\Lu_N(x)$ for $x\in V$.
By pigeon-hole principle, there exists $V'\subset V$ such that 
\[\forall x,y\in V' \quad \|x-y\|_\infty \ge 6m\quad \text{and}\quad |V'|\ge c_d \frac{|V|}{m^ d},\]
 where $c_d$ is a constant depending only on $d$. We choose this set according to some deterministic rule.
Thanks to this choice, we have
for $x,y\in V'$ that 
$\Lambda_{2m}(x)\cap \Lambda_{2m}(y)=\emptyset$ and for every $w\in N\Z^ d$ such that $\Lu_N(w)\cap\Lambda_{m}(x)\ne \emptyset $ then  $\Lu_N(w)\cap\Lambda_{m}(y)= \emptyset $.
We define the events 
\[\cE(x)\coloneqq \{ \Lambda_{m}(x)\subset \omega_\delta ^N\}.\]
Since the vertices in $V'$ are sufficiently spaced, we have that conditionally on $\overline \eta^N(p)$, the events $(\cE(x))_{x\in V'}$ are independent.
It is easy to check that 
\[\{V\ne\emptyset\}\cap \bigcup_{x\in V'}\cE(x) \subset \{\lr{\Lambda_n}{\pazocal{V}_N(p)}{\Lambda_m}{ K_{m,n}(\omega_\delta ^N)}\}.\]
For short, write $\cE\coloneqq \{|V|\ge \kappa\}$.
Hence, it yields
\begin{equation}\label{eq:seed2}
    \P[\lr{\Lambda_n}{\pazocal{V}_N(p)}{\Lambda_m}{ K_{m,n}(\omega_\delta ^N)}]
\ge 1-\P[\cE^{\mathrm{c}}] - \E\left[\mathbf{1}_\cE \prod_{x\in V'}\P[\cE(x)^{\mathrm{c}}|\overline \eta ^N (p)]\right].
\end{equation}
Note that $\cE$ is measurable with respect to $\overline \eta ^N (p)$.
Since $\P[\cE(x)]\geq \delta ^ {Cm^d}$, we have
\begin{equation*}
    \E\left[\mathbf{1}_\cE \prod_{x\in V'}\P[\cE(x)^{\mathrm{c}}|\overline \eta ^N (p)]\right]
\le \E[\mathbf{1}_\cE (1-\delta ^ {Cm^ d} )^{|V'|}]\le (1- \delta ^ {Cm^ d})^{ c_dm^{-d}\kappa}\le \ep
\end{equation*}
where $\kappa$ is chosen large enough depending on $\ep,\delta,N$ and $m$.
The result follows by combining the previous inequality together with inequalities \eqref{eq: cont ce} and \eqref{eq:seed2}.
\end{proof}

\subsection{Proof of Lemma~\ref{lem:sprinklingseed}}
Similarly to Lemma~\ref{lem:seed}, the proof of Lemma~\ref{lem:sprinklingseed} 
consists of two steps. First, we prove that there are a lot of boxes intersecting $\Re$ that are either very close to a seed or such that the box is connected to a seed outside $\Re$. Second, we argue that with high probability there is at least one of these boxes whose neighborhood is open in the $\delta$-sprinkling.

\begin{proof}[Proof of Lemma~\ref{lem:sprinklingseed}]
Fix $p>p'>p_c(N)$ and let $\delta_0$ be as in Proposition \ref{prop:domsto} such that
\begin{equation}\label{eq:domstolem5.6}
    \pazocal{V}_N(p') \preceq \pazocal{V}_N(p)\setminus \omega^0,
\end{equation}
where $\omega^0\sim_d\omega^N_{\delta_0}$ is independent of $\pazocal V_N(p)$. Given $\delta>0$, let $\delta'>0$ be such that $\delta'(2-\delta')=\delta$ and consider $\omega^1,\omega^2 \sim_d\omega^N_{\delta'}$ such that $\pazocal{V}_N(p)$, $\omega^0$, $\omega^1$ and $\omega^2$ are all independent. Note that by construction $\omega:=\omega^1\cup\omega^2 \sim_d \omega^N_\delta$ is independent of $\pazocal{V}_N(p)$.
Let $\ep_0>0$ to be chosen later depending on $\delta $, $\ep$ and $\delta_0$. Let $m\le n$ be as in Lemma~\ref{lem:seed} such that 
  \[\P\left[\lr{\Lambda_n}{\pazocal{V}_N(p')}{\Lambda_m}{ K_{m,n}(\omega^1)}\right]\ge 1-\ep_0.\]
Fix any $\Lambda_m\subset \Re\subset \R^ d$ and set
  \[V\coloneqq \left\{x\in N\Z^d\cap (\Re+\Lc_N):\, \lr{\Lambda_{n}\setminus (\Re+ \Lambda_{2N})}{\pazocal{V}_N(p)}{\Lambda_{4N}(x)}{ K_{m,n}(\omega^1)} \text{ or }\Lambda_{4N}(x)\cap K_{m,n}(\omega^1)\ne \emptyset \right\}.\]
Note that the set $V$ does not depend on $\omega_{\delta_0}$. Let $\widetilde V$ be the boxes in a neighborhood of $V$, that is
\[\widetilde V\coloneqq \{w\in N\Z^d :\exists x\in V\quad\Lu_N(w)\subset \Lambda_{4N}(x)\}.\]
Note that $|\tilde{V}|\leq 8^d|V|$. Set $c\coloneqq \frac 1 {2\cdot 8^ d|\log \delta_0|}$.
Let us first prove that
\begin{equation}\label{eq:step1toprove}
    \P\left[|V|\ge c|\log \varepsilon_0|\right]\ge 1-\sqrt{\varepsilon_0}.
\end{equation}
We claim that
\begin{equation}\label{eq:claiminclusion}
    \big\{ \bigcup_{w\in \tilde{V}} \overline{\Lu_N(w)} \subset \omega^0 \big\} \subset  \{\nlr{\Lambda_n}{\pazocal{V}_N(p)\setminus \omega^0}{\Lambda_m}{ K_{m,n}(\omega^1)}\}.
\end{equation}
Assume that $\lr{\Lambda_n}{\pazocal{V}_N(p)\setminus \omega^0}{\Lambda_m}{ K_{m,n}(\omega^1)}$. We will prove that there exist $w\in \tilde{V}$ such that $\overline{\Lu_N(w)} \not\subset \omega^0$, thus yielding \eqref{eq:claiminclusion}.
Let $x_1,\dots,x_{\ell}\in\overline\eta_o^N(p)$ be the sequence of open points corresponding to the successive open cells crossed by a path in $\pazocal{V}_N(p)\setminus \omega^0$ from $\Lambda_m$ to $ K_{m,n}(\omega^1)$ in $\Lambda_n$. 
We consider two cases. First assume that $x_\ell \in \Re+\Lambda_{2N}$. Then, there exists $x\in N\Z^ d\cap (\Re+\Lc_N)$ such that $x_\ell \in x+\Lambda_{3N}$. Since $(x_\ell +B_N)\cap  K_{m,n}(\omega_\delta ^N)\ne \emptyset$, it follows that $\Lambda_{4N}(x)\cap K_{m,n}(\omega_\delta ^ N)\ne \emptyset$ and $x\in V$. Considering $w\in N\Z^ d$ such that $x_\ell \in\Lu_N(w)$, it follows that $w\in\widetilde V$ and $\overline{\Lu_N(w)} \not\subset \omega^0$. 
Let us now assume  that $x_\ell \notin \Re+\Lambda_{2N}$. Consider
 \[\ell_0\coloneqq\sup\{k\ge 1: x_k\in \Re+\Lambda_{2N}\}+1.\]
Hence, the path $x_{\ell_0},\dots, x_{\ell}$ remains inside $\Lambda_n\setminus (\Re+\Lambda_{2N})$. Furthermore, since $x_{\ell_0-1}\in \Re+\Lc_{2N}$ and $||x_{\ell_0}-x_{\ell_0-1}||\leq N$,  there exists $x\in N\Z^ d\cap (\Re+\Lc_N)$ such that $x_{\ell_0}\in \Lambda_{4N}(x)$. Hence we get $\lr{\Lambda_{n}\setminus (\Re+ \Lambda_{2N})}{\pazocal{V}_N(p)\setminus \omega^0}{\Lambda_{4N}(x)}{ K_{m,n}(\omega^1)}$. It follows that $x \in\widetilde V$ and $\overline{\Lu_N(x)} \not\subset \omega^0$. This finishes the proof of \eqref{eq:claiminclusion}.
As a consequence,
\begin{equation*}
    \begin{split}
       1-\ep_0\le  \P[\lr{\Lambda_n}{\pazocal{V}_N(p')}{\Lambda_m}{ K_{m,n}(\omega^1)}] &\le \P[\lr{\Lambda_n}{\pazocal{V}_N(p)\setminus\omega^0 }{\Lambda_m}{ K_{m,n}(\omega^1)}]\le 1-\P[\bigcup_{w\in \tilde{V}} \overline{\Lu_N(w)} \subset \omega^0]\\
        &\le 1-\E \Big[ \P\big[ \bigcup_{w\in \tilde{V}} \overline{\Lu_N(w)} \subset \omega^0 \,\big|\, \overline\eta ^N(p),\omega^1 \big]\mathbf{1}_{|V|\le c|\log \ep_0|} \Big]
        \\&\le 1-\delta_0^ {8^d c|\log \ep_0|}\P[|V|\le   c|\log \ep_0|]= 1- {\sqrt {\ep_0}} \P[|V|\le   c|\log \ep_0|].
    \end{split}
\end{equation*}
This yields inequality the desired inequality \eqref{eq:step1toprove}.

Let $V'\subset V$ such that $|V'|\ge \frac {|V|}{18^d}$ and for all $u,v\in V'$, $\|u-v\|_\infty \ge 9N$.
We define the event 
\[\cE(x)\coloneqq \{\Lambda_{4N}(x)\subset \omega^2\}.\]
By construction of $V'$ the events $\cE(x)$ are independent of each other and of $\pazocal V_N(p)$ and $\omega^1$. Moreover, we have
\[\P[\cE(x)|\pazocal V_N(p),\omega^1]\ge (\delta') ^ {8^ d}\]
and
\begin{equation*}
\begin{split}
    \P[\cap _{x\in V'}\cE(x)^{\mathrm{c}}, |V| \ge c|\log \varepsilon_0|]&=
    \E[\P[\cap _{x\in V'}\cE(x)^{\mathrm{c}}|\pazocal V_N(p),\omega^1]\mathbf{1}_{|V| \ge c|\log \varepsilon_0|}]\\
    &\le (1- (\delta')^{8^ d})^{c|\log \varepsilon_0|}.
    \end{split}
\end{equation*}
Recall that $\omega=\omega^1\cup\omega^2\sim_d \omega^N_\delta$ and set
\begin{equation*}
    G(x)\coloneqq\{\Lambda_{4N}(x)\subset \omega\}\cap (\{\lr{\Lambda_{n}\setminus (\Re+ \Lambda_{2N})}{\pazocal{V}_N(p)}{\Lambda_{4N}(x)}{ K_{m,n}(\omega)}\}\cup \{\Lambda_{4N}(x)\cap K_{m,n}(\omega)\ne \emptyset \}).
\end{equation*}
First note that 
\begin{equation*}
    \{\Lambda_{4N}(x)\subset \omega^2 \}\cap (\{\lr{\Lambda_{n}\setminus (\Re+ \Lambda_{2N})}{\pazocal{V}_N(p)}{\Lambda_{4N}(x)}{ K_{m,n}(\omega^1)}\}\cup \{\Lambda_{4N}(x)\cap K_{m,n}(\omega^1)\ne \emptyset \})\subset G(x).
\end{equation*}
Hence we get 
\begin{equation*}
\begin{split}
     \P\big[ \bigcup_{x\in N\Z^d\cap(\Re+\Lc_N)}G(x) \big]&\ge \P[|V| \geq c|\log \varepsilon_0|,\, \cup_{x\in V}\cE(x)]\\
     &\ge  1-\P[|V| \le c|\log \varepsilon_0| ] -\P[\cap _{x\in V'}\cE(x)^{\mathrm{c}}, |V| \ge c|\log \varepsilon_0|] \\
     &\ge 1-\sqrt {\ep_0} -\big( 1- (\delta')^{8^d} \big)^{c|\log \varepsilon_0|}.  
\end{split}
\end{equation*}
The result follows by choosing $\ep_0$ small enough depending on $\delta'$, $\ep$ and $\delta_0$ in such a way that
\[\sqrt {\ep_0} + \big( 1- (\delta')^{8^ d} \big)^{c|\log \varepsilon_0|}\le \ep.\] 
\end{proof}

\subsection{Proof of Proposition \ref{prop:domsto}}\label{subsec:domsto}

Let us start by introducing another coupling of $\pazocal V_N(p)$.
Let $\eta^o$ and $\eta^c$ be two independent Poisson point process of intensity $1$ on $\R^d\times [0,1]$. We denote the points in $\R^d\times[0,1]$ by $(x,t)$, with $x\in\R^d$ and $t\in [0,1]$. For every $p\in[0,1]$, define the set of $p$-open and $p$-closed points given by 
\begin{align}
\overline{\eta}_o(p)&\coloneqq\{x\in \R^d:\, \exists t\in[0,p] \text{ such that } (x,t)\in \eta^o\} ~~\text{ and}\\ 
\overline{\eta}_c(p)&\coloneqq\{x\in \R^d:\, \exists t\in(p,1] \text{ such that } (x,t)\in \eta^c\}.
\end{align}
We can then define $\overline{\eta}^N(p)=(\overline{\eta}^N_o(p),\overline{\eta}^N_c(p))$ and $\pazocal V_N(p)$ as in Section~\ref{sec:comparison} -- see \eqref{eq:def_eta_N} and \eqref{eq:def_truncated_Voronoi}.
The advantage of this coupling is that it is easier to open or close a region by sprinkling, as one can add new points as well as remove existing ones by changing $p$.

We start by proving the following two lemmas.

\begin{lemma}\label{lem:sprink} 
For every $N\ge 1$, $\ep>0 $ and $p\in(\ep,1-\ep)$, there exists $\delta>0$ such that for all $x\in N\Z^ d$, the following holds almost surely
    \begin{align}
        \label{eq:sprink_1}
        \P[\Lu_N(x)\subset \overline{\eta}^N_c(p-\ep)\,|\,\overline{\eta}^N(p)]&\ge \delta,\\
        \label{eq:sprink_2}
        \P[\Lu_N(x)\cap \overline{\eta}^N_c(p+\ep)=\emptyset\,|\,\overline{\eta}^N(p)]&\ge \delta.
    \end{align}
\end{lemma}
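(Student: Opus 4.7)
\textbf{Plan for Lemma~\ref{lem:sprink}.} The key feature of the coupling introduced at the start of Section~\ref{subsec:domsto} is that $\eta^o$ and $\eta^c$ are independent. Since $\overline\eta_o^N(p)=\overline\eta_o(p)$ depends only on $\eta^o$, and since $\overline\eta_c^N(p)\cap\Lu_N(y)$, $y\in N\Z^d$, are functions of the restrictions of $\eta^c$ to disjoint slabs $\Lu_N(y)\times[0,1]$, conditioning on all of $\overline\eta^N(p)$ reduces (for estimating the events in the statement, which concern the single box $\Lu_N(x)$) to conditioning on $\overline\eta_c^N(p)\cap\Lu_N(x)$. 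Set $K\coloneqq|\overline\eta_c(p)\cap\Lu_N(x)|$, which is Poisson of parameter $(1-p)N^d$, and distinguish Case~A where $K>2N^d$ (so $\overline\eta_c^N(p)\cap\Lu_N(x)=\Lu_N(x)$) from Case~B where $K\le 2N^d$ (so we see the exact point set $S\coloneqq\overline\eta_c(p)\cap\Lu_N(x)$).

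For \eqref{eq:sprink_1}, Case~A is trivial: monotonicity $\overline\eta_c(p-\ep)\supset\overline\eta_c(p)$ forces $|\overline\eta_c(p-\ep)\cap\Lu_N(x)|>2N^d$, so the conditional probability is $1$. In Case~B, the extra points of $\overline\eta_c(p-\ep)\setminus\overline\eta_c(p)$ inside $\Lu_N(x)$ come from $\eta^c\cap(\Lu_N(x)\times(p-\ep,p])$, a Poisson process of intensity $\ep$ that is independent of everything we conditioned on. The probability that this independent Poisson process contributes strictly more than $2N^d$ points is a positive constant $\delta_1=\delta_1(N,\ep)>0$, which is the desired bound.

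For \eqref{eq:sprink_2}, Case~B is the easy one: conditionally on $S$, the $t$-coordinates of its points are i.i.d.\ uniform on $(p,1]$, so the probability that all lie in $(p,p+\ep]$ (which is equivalent to $\overline\eta_c(p+\ep)\cap\Lu_N(x)=\emptyset$) is $(\ep/(1-p))^{|S|}\ge(\ep/(1-p))^{2N^d}$. The main obstacle is Case~A of \eqref{eq:sprink_2}, since there we condition on $\{K>2N^d\}$ without knowing $K$ exactly. Using that conditionally on $K=k$ the probability in question is $(\ep/(1-p))^k$, one computes
\[
\P\bigl[\overline\eta_c(p+\ep)\cap\Lu_N(x)=\emptyset\,\big|\,K>2N^d\bigr]
= \frac{\sum_{k>2N^d}(\ep N^d)^k/k!}{\sum_{k>2N^d}((1-p)N^d)^k/k!}.
\]
Both series are dominated by their first term up to a factor $2$, because for $k>2N^d$ the ratio of consecutive summands is bounded by $(1-p)N^d/(2N^d+1)<1/2$; hence the quotient is bounded below by $\tfrac12(\ep/(1-p))^{2N^d+1}$, a positive constant depending only on $N,\ep,p$. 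Taking $\delta$ to be the minimum of the four lower bounds obtained (two per case in each display), the lemma follows.
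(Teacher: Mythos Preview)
Your proof is correct and follows essentially the same approach as the paper. For \eqref{eq:sprink_1} the paper bypasses your case distinction by directly observing that $\P[|\eta^c\cap(\Lu_N(x)\times(p-\ep,p])|>2N^d\mid\overline\eta^N(p)]=\P[\mathcal P(\ep N^d)>2N^d]$ works uniformly, but your Case~A/Case~B split is of course equivalent. For \eqref{eq:sprink_2} the paper makes the same case distinction; in Case~A it truncates on $\{K\le 3N^d\}$ and invokes $\P[\mathcal P((1-p)N^d)\le 3N^d\mid \mathcal P((1-p)N^d)>2N^d]\ge\tfrac12$ rather than computing the ratio of tail sums as you do, but this is just a cosmetic difference yielding the same bound $\tfrac12(\ep/(1-p))^{O(N^d)}$.
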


\begin{proof}
Fix $x\in N\Z^d$. We first prove \eqref{eq:sprink_1}. It follows easily from the following inequality
\begin{equation*}
\begin{split}
    \P&[\Lu_N(x)\subset \overline{\eta}^N_c(p-\ep)\,|\,\overline{\eta}^N(p)]\ge \P[ |\eta_c\cap (\Lu_N(x)\times [p-\ep,p))|\ge 2N^ d]=\P[\mathcal P(\ep N^ d)\ge 2N^d]
\end{split}
\end{equation*}
    where $\mathcal P(\lambda)$ is a Poisson random variable with parameter $\lambda$.

We now prove \eqref{eq:sprink_2}. We start by analysing the case in which $\Lu_N(x)$ contains too many closed points,
\begin{equation*}
    \begin{split}
        \P&[\Lu_N(x)\cap \overline{\eta}^N_c(p+\ep)=\emptyset\,|\, \overline{\eta}^N(p),\, \Lu_N(x)\subset\overline{\eta}^N_c(p)]\\
        &\ge  \P[\Lu_N(x)\cap \overline{\eta}^N_c(p+\ep)=\emptyset, |\Lu_N(x)\cap \overline{\eta}_c(p)|\le 3N^d \,\big|\, \overline{\eta}^N(p),\, \Lu_N(x)\subset\overline{\eta}^N_c(p)]\\
        &\ge \E\left [\left(\frac{\ep}{1-p}\right) ^{|\Lu_N(x)\cap \overline{\eta}_c(p)|}\mathbf{1}_{|\Lu_N(x)\cap \overline{\eta}_c(p)|\le 3N^d} \,\Big|\, \overline{\eta}^N(p),\, \Lu_N(x)\subset \overline{\eta}^N_c(p)\right ]\\
        &\ge \left(\frac{\ep}{1-p}\right) ^{3N^d}\P(\mathcal P((1-p)N^d)\le 3N^d| \mathcal P((1-p)N^d)\ge 2N^ d)\ge \frac 12 \left(\frac{\ep}{1-p}\right) ^{3N^d}.
    \end{split}
\end{equation*}
In the case where $\Lu_N(x)\cap \overline{\eta}^N_c(p)\ne \Lu_N(x)$, we have that $|\Lu_N(x)\cap \overline{\eta}_c(p)|\le 2N^d$. It yields that
\begin{equation*}
    \begin{split}
        \P&[\Lu_N(x)\cap \overline{\eta}^N_c(p+\ep)=\emptyset \,|\, \overline{\eta}^N(p),\,\Lu_N(x)\not\subset \overline{\eta}^N_c(p)]\ge \left(\frac{\ep}{1-p}\right) ^{2N^d}.
    \end{split}
\end{equation*}
The lemma follows from the two previous displayed inequalities.
\end{proof}

\begin{lemma}\label{lem:sprink_V_N}
For every $N\geq1$, $\ep>0$ and $p\in(\ep,1-\ep)$, there exists $\delta>0$ such that for all $x\in N\Z^ d$, the following holds almost surely 
\begin{align}
    \label{eq:sprink_V_N_1}\P[\Lu_N(x)\cap  \pazocal V_N(p-\ep) = \emptyset| \pazocal V_N(p)] &\ge \delta,\\
    \label{eq:sprink_V_N_2}\P[\Lu_N(x)\subset  \pazocal V_N(p+\ep)| \pazocal V_N(p)] &\ge \delta.
\end{align}
\end{lemma}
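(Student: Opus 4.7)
The plan is to deduce Lemma~\ref{lem:sprink_V_N} from Lemma~\ref{lem:sprink} by identifying, for each inequality, a sufficient event on the Poisson processes $(\eta^o,\eta^c)$ and then bounding its conditional probability. Since $\pazocal V_N(p)$ is $\sigma(\overline{\eta}^N(p))$-measurable, the tower property reduces both statements to the stronger conditional lower bounds given $\overline{\eta}^N(p)$, which is the form I work with below.

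For \eqref{eq:sprink_V_N_1}, I use the sufficient event
\[
E^- \coloneqq \{\Lu_N(x)\subset \overline{\eta}^N_c(p-\ep)\} \cap \{\overline{\eta}_o(p-\ep)\cap\Lu_N(x) = \emptyset\}.
\]
On $E^-$, every $y\in\Lu_N(x)$ satisfies $d(y,\overline{\eta}^N_c(p-\ep))=0$, so $y\in\pazocal V_N(p-\ep)$ would force $y\in\overline{\eta}_o(p-\ep)$, which the second factor rules out; hence $E^-\subset\{\Lu_N(x)\cap\pazocal V_N(p-\ep)=\emptyset\}$. Because $\eta^o$ and $\eta^c$ are independent, the two factors of $E^-$ are conditionally independent given $\overline{\eta}^N(p)$: the first is lower-bounded by Lemma~\ref{lem:sprink}~\eqref{eq:sprink_1}, and for the second one uses that the time coordinates of the points in $\overline{\eta}_o(p)\cap\Lu_N(x)$ are conditionally i.i.d.\ uniform on $[0,p]$, each exceeding $p-\ep$ with probability $\ep/p$. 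A uniform lower bound on the product is obtained by the same dichotomy-and-truncation strategy as in the proof of Lemma~\ref{lem:sprink}~\eqref{eq:sprink_2}, isolating the event that the open count in the box is controlled.

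For \eqref{eq:sprink_V_N_2}, I mirror the strategy with the sufficient event
\[
E^+ \coloneqq \{\overline{\eta}^N_c(p+\ep)\cap (\Lu_N(x)+B_{N/2})=\emptyset\} \cap \{\overline{\eta}_o(p+\ep)\text{ meets every cube of a fixed cover of } \Lu_N(x) \text{ by cubes of side } N/(4\sqrt{d})\}.
\]
On $E^+$, every $y\in\Lu_N(x)$ has an open point of $\overline{\eta}_o(p+\ep)$ within distance $N/4\leq N$, which is strictly less than the distance to any closed point (all closed points being outside $\Lu_N(x)+B_{N/2}$), so $y\in\pazocal V_N(p+\ep)$. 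The first factor is controlled by applying Lemma~\ref{lem:sprink}~\eqref{eq:sprink_2} to each of the finitely many $N$-boxes meeting $\Lu_N(x)+B_{N/2}$, using the conditional independence of these events given $\overline{\eta}^N(p)$. The second factor is bounded below by a Poisson non-emptiness estimate applied to the new open points $\eta^o\cap(\R^d\times(p,p+\ep])$, which are independent of $\overline{\eta}^N(p)$ and give each small cube a positive probability of being hit.

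The main technical point is the uniform constant $\delta$ in \eqref{eq:sprink_V_N_1}: the factor $(\ep/p)^{|\overline{\eta}_o(p)\cap\Lu_N(x)|}$ is not uniformly bounded at face value, but the dichotomy on the size of the open count (in the spirit of Lemma~\ref{lem:sprink}~\eqref{eq:sprink_2}) combined with the freedom to let $\delta$ depend exponentially on $N^d$ yields the claim.
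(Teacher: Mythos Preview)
Your argument for \eqref{eq:sprink_V_N_2} is essentially the paper's: clear closed points from a bounded neighbourhood of $\Lu_N(x)$ by applying \eqref{eq:sprink_2} to each of the finitely many $N$-boxes involved (these events are conditionally independent given $\overline{\eta}^N(p)$), and seed open points using the fresh Poisson points in $\R^d\times(p,p+\ep]$. The only cosmetic difference is that the paper uses the $3^d$ boxes $\Lu_N(y)$ with $\|y-x\|_\infty\leq N$ and sub-cubes of side $N/d$, while you use the boxes meeting $\Lu_N(x)+B_{N/2}$ and sub-cubes of side $N/(4\sqrt d)$.

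For \eqref{eq:sprink_V_N_1}, however, your argument has a genuine gap. After your tower-property reduction, you are conditioning on $\overline{\eta}^N(p)=(\overline{\eta}_o(p),\overline{\eta}^N_c(p))$; under this conditioning the open count $|\overline{\eta}_o(p)\cap\Lu_N(x)|$ is \emph{deterministic}, so
\[
\P\big[\overline{\eta}_o(p-\ep)\cap\Lu_N(x)=\emptyset \,\big|\, \overline{\eta}^N(p)\big]=(\ep/p)^{|\overline{\eta}_o(p)\cap\Lu_N(x)|}
\]
exactly, with no further event available on which to perform a ``dichotomy''. The analogy with the proof of \eqref{eq:sprink_2} fails precisely because there the closed count is controlled by the built-in truncation of $\overline{\eta}^N_c$ at $2N^d$ (or by further conditioning on it being $\leq 3N^d$), whereas $\overline{\eta}^N_o=\overline{\eta}_o$ carries no such truncation. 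Hence $\P[E^-\,|\,\overline{\eta}^N(p)]$ is not bounded away from $0$ almost surely, and the reduction to $\overline{\eta}^N(p)$ cannot deliver the almost-sure lower bound you claim.

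The paper avoids this entirely: it observes that on $\{\Lu_N(x)\subset\overline{\eta}^N_c(p-\ep)\}$ one has $d(y,\overline{\eta}^N_c(p-\ep))=0$ for every $y\in\Lu_N(x)$, so $\Lu_N(x)\cap\pazocal V_N(p-\ep)\subset\overline{\eta}_o(p-\ep)$, and then simply invokes \eqref{eq:sprink_1}. This leaves at most a finite (Lebesgue-null) set of open points rather than the empty set, but that residue is irrelevant for connectivity; indeed, the proof of Proposition~\ref{prop:domsto} bypasses \eqref{eq:sprink_V_N_1} altogether and uses \eqref{eq:sprink_1} directly.
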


\begin{proof}
First notice that \eqref{eq:sprink_V_N_1} follows readily from \eqref{eq:sprink_1}. In order to prove \eqref{eq:sprink_V_N_2}, we will consider the event where the sprinkling enables to remove all the closed points in a neighborhood of the cube and add uniformly spread open points. When such an event occurs, it is easy to check that the cube is fully open.
Let us first prove that there exists $\delta>0$ depending on $N$, $p$ and $\ep$ such that for every $x\in N\Z^d$ and almost every $\overline \eta ^ N(p)\cap \Lu_N(x)$, we have
\begin{equation}\label{ineq:intermediate step}
    \P[ \cE(x), \Lu_N(x)\cap \overline \eta_c^N(p+\ep)= \emptyset \,|\, \overline \eta ^ N(p)]\ge\delta 
\end{equation}
where $\cE(x)$ is the event that there are sufficiently many uniformly spread points in $\overline{\eta}^N_o(p+\ep)\cap \Lu_N(x)$.
More formally, denote $\Lambda^{(1)}_N(x),\dots,\Lambda^{(d^d)}_N(x)$ the partition of $\Lu_N(x)$ into $d^d$ cubes of side-length $N/d$. Thanks to this choice of partition, the maximum distance between two points in a sub-cube is smaller than $N$.
We define
\[\cE(x)\coloneqq \bigcap_{i=1}^ {d^d}\{\overline \eta_o^N(p+\ep)\cap \Lambda_N^ {(i)}(x)\ne \emptyset\}. \]
First note that by independence of $\eta_o $ and $\eta_c$, we have
\begin{equation*}
\begin{split}
    \P&[ \cE(x), \Lu_N(x)\cap \overline \eta_c^N(p+\ep)= \emptyset \,|\, \overline \eta ^ N(p)]\\     &\qquad=\P[\cE(x)| \overline \eta ^ N(p)\cap \Lu_N(x)]\cdot  \P[ \Lu_N(x)\cap \overline \eta_c^N(p+\ep)= \emptyset \,|\, \overline \eta ^ N(p)].
    \end{split}
\end{equation*}
Besides, we have
\begin{equation*}
    \P[ \cE(x)| \overline \eta ^ N(p)\cap \Lu_N(x)]\ge \prod_{i=1} ^ {d^ d}\P[\eta_o\cap (\Lambda_N^ {(i)}(x)\times[p,p+\ep])\ne \emptyset]=(1-e^{-\ep (N/d)^ d}) ^ {d^ d}.
\end{equation*}
By Lemma \ref{lem:sprink} together with the two previous inequalities, we obtain
\begin{equation*}
    \P[ \cE(x), \Lu_N(x)\cap \overline \eta_c^N(p+\ep)= \emptyset \,|\, \overline \eta ^ N(p)]\ge (1-e^{-\ep (N/d)^ d}) ^ {d^ d} \delta
\end{equation*}
By independence conditionally on $\overline{\eta}^N(p)$, we have
\begin{equation}\label{ineq:intermediate step2}
    \P\Big[ \bigcap_{\substack{y\in N\Z^d\\ \|y-x\|_\infty=N}} \cE(y)\cap\{\Lu_N(y)\cap \overline \eta_c^N(p+\ep)= \emptyset\} \,\big|\, \overline \eta ^ N(p) \Big] \ge (1-e^{-\ep(N/d)^d})^{3^d d^d} \delta^{3^d} =:\delta'>0.
\end{equation}
It is easy to see that the event on the left hand side of \eqref{ineq:intermediate step2} is contained in the event $\left\{\Lu_N(x)\subset \pazocal{V}_N(p+\ep)\right\}$, thus concluding the proof.
\end{proof}

We are now ready to prove Proposition~\ref{prop:domsto}.

\begin{proof}[Proof of Proposition \ref{prop:domsto}]
Fix $\ep>0$. The stochastic domination \eqref{eq:domsto_1} follows easily from \eqref{eq:sprink_1} together with the observation that conditionally on $\overline \eta ^ N(p)$ the events $\{\Lu_N(x)\subset \overline{\eta}^N_c(p-\ep)\}$, $x\in N\Z^ d$ are all independent.
We now prove \eqref{eq:domsto_2}. 
By Lemma~\ref{lem:sprink_V_N}, there exists $\delta>0$ such that for every $p\in(4^{-d}\ep,1-4^{-d}\ep)$ and $x\in N\Z^d$, we have
\[\P[\Lu_N(Nx)\subset  \pazocal V_N(p+4^ {-d}\ep)| \pazocal V_N(p)]\ge \delta.\]
Notice that for every fixed $r\in\{0,1,2,3\}^d$, conditionally on $\overline{\eta}^N(p)$ all the events 
$\{\Lu_N(Nx)\subset  \pazocal V_N(p+\ep)\}$, $x\in 4\Z^d+r$, are independent. 
We conclude as before that, for every $r\in \{0,1,2,3\}^ d$ and $p\in(4^{-d}\ep,1-4^ {-d}\ep)$, we have
\begin{equation}\label{eq:domsto_partial}
    \pazocal V_N(p+4^ {-d}\ep)\succeq\pazocal V_N(p)\cup (\omega_\delta ^N)_r,
\end{equation}
where
\[(\omega_\delta ^N)_ r\coloneqq \bigcup_{x\in 4\Z^ d+r : (\omega_\delta)_x=1}\Lu_N(Nx).\]
By successively applying \eqref{eq:domsto_partial} for all  $r\in \{0,1,2,3\}^ d$, it yields
\begin{equation}
    \pazocal V_N(p+\ep)\succeq\pazocal V_N(p)\bigcup_{r\in \{0,1,2,3\}^ d} (\omega_\delta ^N)_r=\pazocal V_N(p) \cup \omega_\delta ^N
\end{equation}
for every $p\in(\ep,1-\ep)$.
\end{proof}


\paragraph{Acknowledgements.} We are grateful to an anonymous referee for their valuable comments. This research was supported by the European Research Council (ERC) under the European Union’s Horizon 2020 research and innovation program (grant agreement No 851565).


\end{document}